\newcommand{\email}[1]{{\tt #1}}
\newcommand{\R}{\mathbb{R}}
\newcommand{\norm}[1]{\|#1\|}
\newcommand{\bnorm}[1]{\big\|#1\big\|}
\newcommand{\dist}[1]{{\rm dist}(#1)}
\newcommand{\mv}{\,\mid\,}
\newcommand{\B}{{\cal B}}
\newcommand{\J}{{\cal J}}
\newcommand{\K}{{\cal K}}
\newcommand{\Sp}{{\cal S}}
\newcommand{\F}{{\cal F}}
\newcommand{\Lag}{{\cal L}}
\newcommand{\setto}[1]{\mathop{\rightarrow}\limits^#1}
\newcommand{\skalp}[1]{\langle #1\rangle}
\newcommand{\xb}{\bar x}
\newcommand{\yb}{\bar y}
\newcommand{\zb}{\bar z}
\newcommand{\pb}{\bar p}
\newcommand{\lb}{\bar\lambda}
\newcommand{\AT}[2]{{\textstyle{#1\atop#2}}}
\newcommand{\oo}{o}
\newcommand{\argmax}{\mathop{\rm arg\,max}\limits}
\newcommand{\lin}{{\rm lin\,}}
\newcommand{\Span}{{\rm sp\,}}
\newcommand{\ri}{{\rm ri\,}}
\newcommand{\inn}{{\rm int\,}}
\newcommand{\gph}{\mathrm{gph}\,}
\newcommand{\dom}{\mathrm{dom}\,}
\newcommand{\tto}{\rightrightarrows}
\newcommand{\Limsup}{\mathop{{\rm Lim}\,{\rm sup}}}
\newcommand{\myvec}[1]{\left(\begin{array}{c}#1\end{array}\right)}
\newtheorem{theorem}{Theorem}[section]
\newtheorem{proposition}[theorem]{Proposition}
\newtheorem{remark}[theorem]{Remark}
\newtheorem{lemma}[theorem]{Lemma}
\newtheorem{corollary}[theorem]{Corollary}
\newtheorem{definition}[theorem]{Definition}
\newtheorem{example}[theorem]{Example}
\newtheorem{assumption}{Assumption}
\title{Stability analysis for parameterized variational systems with implicit constraints}
\author{Mat\'{u}\v{s} Benko\thanks{
	      Institute of Computational Mathematics, Johannes Kepler University Linz,
              A-4040 Linz, Austria, \email{benko@numa.uni-linz.ac.at}}\and Helmut Gfrerer\thanks{Institute of Computational Mathematics, Johannes Kepler University Linz, A-4040 Linz, Austria; \email{helmut.gfrerer@jku.at}}
 \and   Ji\v{r}\'{i} V. Outrata\thanks{Institute of Information Theory and Automation, Academy of Sciences of the Czech Republic, 18208 Prague, Czech Republic, and Centre for
              Informatics and Applied Optimization, Federation University of Australia, POB 663, Ballarat,  Vic 3350, Australia,  \email{outrata@utia.cas.cz}}}
\date{}
\begin{document}
\maketitle
 {\bf Abstract.} In the paper we provide new conditions ensuring the isolated calmness property and the Aubin property
of parameterized variational systems with constraints depending, apart from the parameter, also on the solution
itself. Such systems include, e.g., quasi-variational inequalities and implicit complementarity problems. Concerning
the Aubin property, possible restrictions imposed on the parameter are also admitted. Throughout the paper, tools
from the directional limiting generalized differential calculus are employed enabling us to impose only rather weak (non-
restrictive) qualification conditions. Despite the very general problem setting, the resulting conditions are workable
as documented by some academic examples.

{\bf Key words.}
 parameterized variational system, solution map, Aubin property, isolated calmness property

{\bf AMS Subject classification.}
 49J53, 90C31, 90C46

\section{Introduction}

 In variational analysis, a great effort has been devoted to the study of stability and sensitivity of solution maps to parameter-dependent optimization and equilibrium problems. In particular, the researchers have investigated various Lipschitzian properties
of these maps around given reference points. To obtain useful results, one employs typically some efficient tools of generalized differentiation discussed in a detailed way in the monographs \cite{DoRo14,KlKum02, Mo06a, Mo18,RoWe98}. Starting from 2011, the available arsenal of these tools includes also the calculus of directional limiting normal cones and coderivatives which enables us in some cases a finer analysis of parametric equilibria than its non-directional counterpart. This new theory has been initiated in \cite{GiMo11} and then thoroughly developed in a number of papers authored and co-authored by H. Gfrerer \cite{BeGfrOut18,Gfr11,Gfr13a,Gfr13b,Gfr14a,GfrKl16,GfrMo17,GfrOut16,GfrOut18}.

In particular, in \cite{GfrOut16} one finds rather weak (non-restrictive) conditions ensuring the calmness and the Aubin property of general implicitly defined multifunctions. The criterion for the Aubin property has then been worked out in \cite{GfrOut17} for a class of parametric variational systems with fixed (non-perturbed) constraint sets and in \cite{GfrOut18} for systems with implicit parameter-dependent constraints. The model from \cite{GfrOut18} was investigated already in \cite{MoOut07} by using the (classical) generalized differential calculus of B. Mordukhovich. It encompasses quasi-variational inequalities (QVIs), implicit complementarity problems
and also standard variational inequalities of the first kind with parameter-dependent constraints.

In this paper we consider the same model as in \cite{MoOut07} and \cite{GfrOut18} but remove the (rather severe) non-degeneracy-type assumption imposed in \cite{GfrOut18} on the constraint system. Instead of it, we make use of a (much weaker) metric inequality stated in Assumption \ref{AssA1}. Further, we analyze now not just the standard Aubin property of the considered solution map, denoted by $S$, but the Aubin property {\em relative} to a given set of feasible parameters. Clearly, S may enjoy this type of Lipschitzian stability even when the standard Aubin property is violated. Finally, we provide in this paper also a new condition, ensuring the isolated calmness of $S$.

The structure of the considered constraint system has enabled us to employ some strong results from \cite{DoRo04, DoRo14} and \cite{GfrOut16} concerning tangents and normals to the graph of the normal-cone mapping associated with a convex polyhedral set. More precisely, these tangents and normals can be expressed via some faces of an associated critical cone. This representation substantially contributes to the workability of final conditions ensuring the Aubin property of $S$. In addition, also some other statements in connection with directional non-degeneracy and directional metric regularity could be formulated in terms of these faces.

The plan of the paper is as follows. Sections 2.1 and 2.2 provide the reader with basic notions of the standard and directional
generalized differential calculus and with some basic facts about those Lipschitzian stability properties which are extensively
used throughout the whole paper. Section 2.3 contains the necessary background from the theory of convex polyhedral sets and polyhedral multifunctions. The last preliminary Section 2.4 is then devoted to the directional metric subregularity of a
particular multifunction, which arises later as a qualification condition, and to the new notion of directional non-degeneracy of a
constraint system, playing a central role in the subsequent development. Section 3 concerns the general model of an
implicitly defined multifunction considered in \cite{GfrOut16}. In this framework we find there a directional variant of the Levy-Rockafellar
characterization of the isolated calmness property and a counterpart of \cite[Theorem 4.4]{GfrOut16} corresponding to the Aubin
property relative to a set of feasible parameters. In the rest of the paper these statements are worked out for the considered variational system with implicit constraints. So, in Section 4 the respective graphical derivative is computed, which is a basis for the formulation of the final condition ensuring the isolated calmness property of $S$
presented in Section 5. Therafter, in Section 6 one finds a new workable sufficient condition guaranteeing the Aubin property of S relative to a given set of feasible parameters. Both these final results as well as some other important statements are illustrated by examples.

There are well-known equilibria in economy and mechanics modeled by QVIs and implicit complementarity problems, cf.  \cite{BaiCa84}. As an example, let us mention the generalized Nash equilibrium problems (GNEPs) which describe, e.g., the behavior of agents acting on markets with a limited amount of resources. Very often, these equilibria depend on some uncertain data which can be viewed as parameters. The results of this paper can then be used in {\em post-optimal analysis} of such equilibria, where the stability issues are of ultimate importance.

Given a set-valued mapping $M:\R^l\times\R^n\tto\R^m$, the general implicitly defined multifunction analyzed in \cite{GfrOut16} is given by the relation
\begin{equation}\label{EqGeneralInclusion}0\in M(p,x).\end{equation}
We are going to analyze  the associated {\em solution mapping} $S:\R^l\tto\R^n$ defined by
\begin{equation}\label{EqGeneralSolMap} S(p):=\{x\in\R^n\mv 0\in M(p,x)\}.\end{equation}
The variational system investigated in \cite{MoOut07} and \cite{GfrOut18} attains the form
\begin{equation}
  \label{EqVarSystem}0\in M(p,x):=f(p,x)+\widehat N_{\Gamma(p,x)}(x)\ \text{with}\ \Gamma(p,x):=\{z\mv g(p,x,z)\in D\},
\end{equation}
where $f:\R^l\times\R^n\to\R^n$ is continuously differentiable, $g:\R^l\times\R^n\times\R^n\to\R^s$ is twice continuously differentiable and $D\subset\R^s$ is a convex polyhedral set.

The following notation is employed. Given a set $A \subset \R^{n}$, $\Span A$ stands for the linear hull of $A$, $\ri A$ is the relative interior of $A$ and $A^{\circ}$ is the (negative) polar of $A$. We denote by $\dist{x,A}:=\inf_{y\in A}\norm{x-y}$ the usual point to set distance with the convention $\dist{x,\emptyset}=\infty$.
For a sequence $x_k$, $x_k \setto{A} \xb$
stands for $x_k \to \xb$ with $x_k \in A$.
For a convex cone $K,~ \lin~ K$ denotes the {\em lineality space} of $K$, i.e., the set $K \cap(- K)$. Further, $\B_{\R^n}$, $\Sp_{\R^n}$  is the unit ball and the unit sphere in $\R^n$, respectively. Given a vector $a \in \R^{n}$, $[a]$ is the linear space generated by $a$ and $[a]^{\perp}$ stands for the orthogonal complement to $[a]$.
Finally, given a set-valued map $F:\R^n\tto\R^m$,
$\gph F:=\{(x,y)\in \R^n \times \R^m\mv y\in F(x)\}$
stands for the graph of $F$ and $\Limsup_{x\to \xb}F(x)$
denotes the outer set limit in the sense of Painlev\'{e}-Kuratowski.

\section{Preliminaries}
\subsection{Variational geometry and generalized differentiation}
We start by recalling several definitions and results from variational analysis.
Let $\Omega\subset\R^n$ be an arbitrary closed set and $\xb\in\Omega$.
The {\em contingent} (also called  {\em Bouligand} or {\em tangent}) {\em cone} to $\Omega$ at $\xb$,
denoted by $T_{\Omega}(\xb)$, is given by
\[T_{\Omega}(\xb):=\{u\in \R^n\mv \exists (u_k)\to u, (t_k)\downarrow 0: \xb+t_ku_k\in\Omega \ \forall k\}.\]
A tangent $u\in T_\Omega(\xb)$ is called {\em derivable} if $\dist{\xb+tu,\Omega}=\oo(t)$.

We denote by
\begin{equation*}
\widehat N_{\Omega}(\xb):=T_{\Omega}(\xb)^\circ
\end{equation*}
the {\em Fr\'echet} ({\em regular}) {\em normal cone} to $\Omega$ at $\xb$.
The {\em limiting} ({\em Mordukhovich}) {\em normal cone} to $\Omega$ at $\xb$ is
defined by
\[N_{\Omega}(\xb):=\{x^* \in\R^n \mv \exists (x_k)\setto{\Omega}\xb,\ (x_k^*)\to x^*: x_k^*\in \widehat N_{\Omega}(x_k) \ \forall k\}.\]
Finally, given a direction $u\in \R^n$, we denote by
\[N_\Omega(\xb;u):=\Limsup_{\AT{t\downarrow 0}{u'\to u }}\widehat N_\Omega(\xb+tu')\]
the {\em directional limiting normal cone} to $\Omega$ in direction $u$ at $\xb$.

If $\xb \notin \Omega$, we put $T_{\Omega}(\xb)=\emptyset$, $\widehat N_{\Omega}(\xb)=\emptyset$, $N_{\Omega}(\xb)=\emptyset$ and $N_\Omega(\xb;u)=\emptyset$. Further note that
$N_\Omega(\xb;u)=\emptyset$ whenever $u\not\in T_\Omega(\xb)$.
If $\Omega$ is convex, then $\widehat N_\Omega(\xb)= N_\Omega(\xb)$ amounts to the classical normal cone in the sense of convex analysis and we will  write $N_\Omega(\xb)$.

Given a pair $(\xb,\xb^*)\in\gph \widehat N_\Omega$ we denote by
\[\K_\Omega(\xb,\xb^*):=T_\Omega(\xb)\cap [\xb^*]^\perp\]
the {\em critical cone} to $\Omega$ at $\xb$ with respect to $\xb^*$.

The following generalized derivatives of set-valued mappings are defined by means of the tangent cone and the (directional) limiting normal cone to the graph of the mapping.
\begin{definition}\label{DefGenDeriv}
Let $F:\R^n\tto\R^m$ be a set-valued mapping having locally closed graph around $(\xb,\yb)\in\gph F$.
\begin{enumerate}
\item [(i)]
The set-valued map $D F(\xb,\yb):\R^n\tto\R^m$, defined by
\[
DF(\xb,\yb)(u):= \{v\in\R^m\mv (u,v)\in T_{\gph F}(\xb,\yb)\}, u\in\R^n
\]
is called the {\em graphical derivative} of $F$ at $(\xb,\yb)$.
\item[(ii)]
 The set-valued map $\widehat D^\ast F(\xb,\yb ):\R^m\tto\R^n$
 \[\widehat D^\ast F(\xb,\yb)(v^\ast):=\{u^\ast\in \R^n \mv (u^\ast,- v^\ast)\in \widehat N_{\gph F}(\xb,\yb )\}, v^\ast\in \R^m\]
is called the {\em regular (Fr\'echet) coderivative} of $F$ at $(\xb,\yb )$.
\item [(iii)]
 The set-valued map $D^\ast F(\xb,\yb ): \R^m\tto\R^n$,
 defined by
\[ D^\ast F(\xb,\yb)(v^\ast):=\{u^\ast\in \R^n \mv (u^\ast,- v^\ast)\in N_{\gph F}(\xb,\yb )\}, v^\ast\in \R^m\]
is called the {\em limiting (Mordukhovich) coderivative} of $F$ at $(\xb,\yb )$.
\item [(iv)]
 Given a pair of directions $(u,v)\in\R^n\times\R^m$, the
 set-valued map  $D^\ast F((\xb,\yb ); (u,v)):\R^m\tto\R^n$, defined by
\begin{equation*}
D^\ast  F((\xb,\yb ); (u,v))(v^\ast):=\{u^\ast \in \R^n \mv (u^\ast,-v^\ast)\in N_{\gph F}((\xb,\yb ); (u,v)) \}, v^\ast\in \R^m
\end{equation*}
is called the {\em directional limiting coderivative} of $F$ in direction $(u,v)$ at $(\xb,\yb)$.
\end{enumerate}
\end{definition}

\subsection{Regularity and Lipschitzian properties of set-valued mappings}
First we recall some well-known definitions.
\begin{definition}\label{DefMetrReg}Let $F:\R^n\tto\R^m$ be a mapping and let $(\xb,\yb)\in \gph F$.
We say that $F$ is {\em metrically regular around $(\xb,\yb)$} if there are neighborhoods $U$ of $\xb$ and $V$ of $\yb$ along with some real $\kappa\geq 0$ such that
\begin{equation}\label{EqMetrReg}\dist{x,F^{-1}(y)}\leq \kappa\dist{y,F(x)}\ \forall (x,y)\in U\times V.\end{equation}
When fixing $y=\yb$ in this condition, $F$ is said to be {\em metrically subregular at $(\xb,\yb)$}, i.e., we require
\begin{equation}\label{EqSubReg}\dist{x,F^{-1}(\yb)}\leq \kappa\dist{\yb,F(x)}\ \forall x\in U.\end{equation}
\end{definition}
A well-known coderivative characterization of metric regularity is known as ''Mordukhovich criterion'' and reads as follows.
\begin{theorem}[{\cite[Theorem 3.3]{Mo18}}]\label{ThMetrReg}Assume that the set-valued mapping $F:\R^n\tto\R^m$ has locally closed graph around $(\xb,\yb)\in\gph F$. Then $F$ is metrically regular around $(\xb,\yb)$ if and only if
\begin{equation}\label{EqCondMetrReg}
 0\in D^*F(\xb,\yb)(v^*)\ \Rightarrow\ v^*=0.
\end{equation}
\end{theorem}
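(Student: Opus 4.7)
The plan is to prove the Mordukhovich criterion through the classical two-step argument that first identifies metric regularity of $F$ around $(\bar x,\bar y)$ with the Aubin (Lipschitz-like) property of $F^{-1}$ around $(\bar y,\bar x)$, and then characterizes the latter via the coderivative kernel. The equivalence of metric regularity with the Aubin property of the inverse is immediate from Definition \ref{DefMetrReg} by swapping the roles of the primal variables, so the real task reduces to the coderivative characterization.

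For the necessity, I would assume \eqref{EqMetrReg} with constant $\kappa$ and fix $u^*\in D^*F(\bar x,\bar y)(v^*)$, so that there exist $(x_k,y_k)\setto{\gph F}(\bar x,\bar y)$ and $(u_k^*,-v_k^*)\in\widehat N_{\gph F}(x_k,y_k)$ with $(u_k^*,v_k^*)\to(u^*,v^*)$. The defining inequality of the Fréchet normal cone combined with the primal (openness-at-linear-rate) reformulation of metric regularity lets me, for any $v\in\R^m$ and small $t>0$, compare $(x_k,y_k)$ with a nearby point $(x,y_k+tv)\in\gph F$ satisfying $\|x-x_k\|\leq\kappa t\|v\|$. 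Plugging into the Fréchet normal inequality, dividing by $t$ and taking $v=-v_k^*/\|v_k^*\|$ yields $\|v_k^*\|\leq\kappa\|u_k^*\|+\epsilon_k$ with $\epsilon_k\downarrow 0$; passing to the limit shows that $u^*=0$ forces $v^*=0$.

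For the sufficiency, I would argue by contradiction. If \eqref{EqMetrReg} fails, then for each $k$ there exist $(x_k,y_k)\to(\bar x,\bar y)$ with $a_k:=\dist{y_k,F(x_k)}\to 0$ and $\dist{x_k,F^{-1}(y_k)}>k\,a_k$. Apply Ekeland's variational principle to the lower semicontinuous function $\varphi_k(x):=\dist{y_k,F(x)}$ on a ball around $x_k$, with weight $\sqrt{a_k}$, producing a perturbed minimizer $\tilde x_k$ at which $\varphi_k(x)+\sqrt{a_k}\,\|x-\tilde x_k\|$ attains its minimum and $\|\tilde x_k-x_k\|\leq\sqrt{a_k}$, so in particular $\tilde x_k\notin F^{-1}(y_k)$ for large $k$. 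A fuzzy sum rule for Fréchet subdifferentials, combined with the standard representation of subgradients of $\dist{y_k,F(\cdot)}$ via normals to $\gph F$ at an almost-projection of $y_k$ onto $F(\tilde x_k)$, then extracts base points $(x_k',y_k')\in\gph F$ close to $(\tilde x_k,y_k)$ and Fréchet normals $(u_k^*,-v_k^*)\in\widehat N_{\gph F}(x_k',y_k')$ with $\|v_k^*\|=1$ and $u_k^*\to 0$. Passing to a convergent subsequence yields a nonzero $v^*$ with $0\in D^*F(\bar x,\bar y)(v^*)$, contradicting \eqref{EqCondMetrReg}.

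The main obstacle is clearly the sufficiency: the radius of the Ekeland ball, the choice of weight, and the accuracy tolerated in the fuzzy sum rule must be balanced so that the extracted normals satisfy $\|v_k^*\|=1$ while simultaneously $u_k^*\to 0$. The quantitative failure $\dist{x_k,F^{-1}(y_k)}/a_k\to\infty$ is precisely what keeps $\tilde x_k$ away from $F^{-1}(y_k)$ and prevents the $v_k^*$-component from collapsing. The necessity direction is, by contrast, essentially a direct book-keeping exercise once metric regularity has been rephrased as openness at a linear rate.
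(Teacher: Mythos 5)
The paper does not prove this theorem; it is quoted directly from \cite{Mo18} as a known result, so there is no in-paper argument to compare against. Judged on its own, your proposal follows the standard route: necessity from the Fr\'echet normal inequality combined with linear openness, sufficiency from Ekeland's variational principle plus the fuzzy sum rule to extract a unit Fr\'echet normal with vanishing horizontal part. The necessity half is essentially a correct sketch (the $\epsilon_k$ you write is spurious --- one gets $\|v_k^*\|\le\kappa\|u_k^*\|$ outright after letting $t\downarrow 0$ and $\rho\downarrow 1$ in the openness estimate, but the conclusion is the same).

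The sufficiency half has a concrete gap at exactly the step you label ``the main obstacle'' and then assert to be resolved. With weight $\sqrt{a_k}$ Ekeland gives $\|\tilde x_k-x_k\|\le\sqrt{a_k}$, whereas the only thing known about $\dist{x_k,F^{-1}(y_k)}$ is the lower bound $k\,a_k$; whenever $a_k<1/k^2$ one has $\sqrt{a_k}>k\,a_k$, so nothing excludes $\tilde x_k\in F^{-1}(y_k)$. At such a $\tilde x_k$ one has $\varphi_k(\tilde x_k)=0$, the penalized function attains its minimum with value $0$ trivially, and the subdifferential/normal extraction returns nothing --- in particular the normalization $\|v_k^*\|=1$ cannot be forced. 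The weight must be tuned so that the resulting Ekeland radius $\lambda_k$ satisfies simultaneously $\lambda_k<\dist{x_k,F^{-1}(y_k)}$, $a_k/\lambda_k\to 0$ and $\lambda_k\to 0$; for instance $\lambda_k=\frac12\min\big(k\,a_k,\sqrt{a_k}\big)$ works \emph{provided} $a_k\to 0$, which is a second unaddressed point: local closedness yields $\dist{(x_k,y_k),\gph F}\to 0$ but not $\dist{y_k,F(x_k)}\to 0$, since $F$ may be empty-valued arbitrarily close to $\xb$. One must therefore choose the bad sequences more carefully --- e.g.\ by passing through the linear-openness reformulation of metric regularity, which produces base points $(x_k,y_k)\in\gph F$ and perturbed targets $y_k'$ with $a_k'=\dist{y_k',F(x_k)}\le t_k\to 0$ built in. Both issues are repairable, but as written the sufficiency direction does not close.
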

One can find numerous sufficient conditions for metric subregularity in the literature, see, e.g., \cite{FabHenKruOut10,FabHenKruOut12,Gfr11,Gfr13a,Gfr13b,Gfr14a,HenJouOut02, IofOut08,Kru15a,ZheNg10}. However, these sufficient conditions are often very difficult to verify. The following sufficient condition for metric subregularity is not as week as possible but it is stable with respect to certain perturbations, cf. \cite{DoGfrKruOut18}.
\begin{theorem}[{\cite[Theorem 2.6]{GfrOut16}}]\label{ThSubReg}
Assume that the set-valued mapping $F:\R^n\tto\R^m$ has locally closed graph around $(\xb,\yb)\in\gph F$. If
  \begin{equation*}
 \forall 0\not=u\in\R^n:\ 0\in D^\ast F\big((\xb,\yb);(u,0)\big)(v^\ast)\Rightarrow  v^\ast=0,
\end{equation*}
then $F$ is metrically subregular at $(\xb,\yb)$.
\end{theorem}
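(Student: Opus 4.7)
The plan is to argue by contradiction using Ekeland's variational principle, producing a direction that violates the coderivative condition. Suppose $F$ is not metrically subregular at $(\xb,\yb)$. Then for every $k\in\N$ there exists $x_k$ with $\norm{x_k-\xb}\leq 1/k$ satisfying
\[
\dist{x_k,F^{-1}(\yb)}>k\,\dist{\yb,F(x_k)}.
\]
In particular $x_k\notin F^{-1}(\yb)$, so $\alpha_k:=\dist{\yb,F(x_k)}>0$; since $\xb\in F^{-1}(\yb)$ implies $\dist{x_k,F^{-1}(\yb)}\leq\norm{x_k-\xb}\to 0$, we also get $\alpha_k\to 0$. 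Pick $y_k\in F(x_k)$ with $\norm{y_k-\yb}\leq 2\alpha_k$; then $(x_k,y_k)\in\gph F$ and $(x_k,y_k)\to(\xb,\yb)$.

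Next I would apply Ekeland's variational principle to $(x,y)\mapsto\norm{y-\yb}$ on the locally closed set $\gph F$, starting from $(x_k,y_k)$ (whose value exceeds $\inf=0$ by at most $2\alpha_k$), with radius $\lambda_k:=\sqrt{\alpha_k\,\dist{x_k,F^{-1}(\yb)}}$. This produces $(\tilde x_k,\tilde y_k)\in\gph F$ with $\norm{(\tilde x_k,\tilde y_k)-(x_k,y_k)}\leq\lambda_k$ that minimizes
\[
\varphi_k(x,y):=\norm{y-\yb}+\varepsilon_k\norm{(x,y)-(\tilde x_k,\tilde y_k)}
\]
over $\gph F$, where $\varepsilon_k:=2\alpha_k/\lambda_k=2\sqrt{\alpha_k/\dist{x_k,F^{-1}(\yb)}}<2/\sqrt k\to 0$. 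Note $\tilde y_k\neq\yb$: otherwise $\tilde x_k\in F^{-1}(\yb)$ would force $\dist{x_k,F^{-1}(\yb)}\leq\lambda_k<\dist{x_k,F^{-1}(\yb)}/\sqrt k$, a contradiction for $k>1$. Hence $\norm{y-\yb}$ is smooth at $\tilde y_k$ with unit gradient $v_k^\ast:=(\tilde y_k-\yb)/\norm{\tilde y_k-\yb}$, and the Fr\'echet subdifferential sum rule for a smooth function plus a convex Lipschitz function plus an indicator yields $(\eta_k,\zeta_k)\in\widehat N_{\gph F}(\tilde x_k,\tilde y_k)$ and $(a_k,b_k)\in\varepsilon_k\B_{\R^{n+m}}$ with
\[
(0,v_k^\ast)+(a_k,b_k)+(\eta_k,\zeta_k)=0,
\]
so $\norm{\eta_k}\to 0$ and $\zeta_k+v_k^\ast\to 0$.

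The last step is to normalize and pass to a directional limit. Set $t_k:=\norm{\tilde x_k-\xb}$; from $\xb\in F^{-1}(\yb)$ and $\norm{x_k-\tilde x_k}\leq\lambda_k$,
\[
t_k\geq\dist{x_k,F^{-1}(\yb)}-\lambda_k>(1-1/\sqrt k)\,\dist{x_k,F^{-1}(\yb)}>0
\]
for large $k$, while $t_k\to 0$. The unit vectors $u_k:=(\tilde x_k-\xb)/t_k$ admit a subsequential limit $u$ with $\norm{u}=1$, and the bound $\norm{\tilde y_k-\yb}\leq 2\alpha_k+\lambda_k$ combined with $\alpha_k/\dist{x_k,F^{-1}(\yb)}<1/k$ gives $(\tilde y_k-\yb)/t_k\to 0$. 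Passing to a further subsequence with $v_k^\ast\to v^\ast$, $\norm{v^\ast}=1$, the definition of the directional limiting normal cone yields
\[
(0,-v^\ast)\in N_{\gph F}\bigl((\xb,\yb);(u,0)\bigr),
\]
i.e., $0\in D^\ast F((\xb,\yb);(u,0))(v^\ast)$ with $u\neq 0$ and $v^\ast\neq 0$, contradicting the hypothesis. The main obstacle I expect is the choice of Ekeland radius $\lambda_k$: it must be large enough that the residual $\varepsilon_k\to 0$ (so the $x$-component of the approximate normal vanishes in the limit) yet small enough relative to $\dist{x_k,F^{-1}(\yb)}$ that the $y$-component of the limiting direction is exactly $0$ and the $x$-component is nonzero. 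The geometric mean $\lambda_k=\sqrt{\alpha_k\,\dist{x_k,F^{-1}(\yb)}}$ achieves both by exploiting the quantitative failure $\dist{x_k,F^{-1}(\yb)}/\alpha_k\to\infty$.
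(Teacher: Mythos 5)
Your overall strategy --- argue by contradiction, extract a sequence $x_k$ witnessing the failure of subregularity quantitatively, apply Ekeland's variational principle to $\norm{y-\yb}$ on $\gph F$ with the geometric-mean radius $\lambda_k=\sqrt{\alpha_k\,\dist{x_k,F^{-1}(\yb)}}$, and then pass to a directional limit to contradict the coderivative hypothesis --- is exactly the right one and matches the standard Ekeland-based route to this result. The parameter choice, the verification $\tilde y_k\neq\yb$, the lower bound on $t_k$, the estimate showing $(\tilde y_k-\yb)/t_k\to 0$, and the final passage to $(0,-v^*)\in N_{\gph F}\bigl((\xb,\yb);(u,0)\bigr)$ with $\norm{u}=\norm{v^*}=1$ are all correct.

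There is, however, a genuine gap at the optimality-condition step. You invoke an \emph{exact} Fr\'echet subdifferential sum rule of the form ``smooth $+$ convex Lipschitz $+$ indicator'' to obtain $(\eta_k,\zeta_k)\in\widehat N_{\gph F}(\tilde x_k,\tilde y_k)$ and $(a_k,b_k)\in\varepsilon_k\B$ with $(0,v_k^*)+(a_k,b_k)+(\eta_k,\zeta_k)=0$. This exact rule is false: $\widehat\partial(h+\delta_\Omega)(\bar z)\subset\partial h(\bar z)+\widehat N_\Omega(\bar z)$ fails in general when $h$ is convex Lipschitz and $\Omega$ is merely closed, because the tangent cone $T_\Omega(\bar z)$ need not be convex. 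A concrete counterexample in $\R^2$: take $\Omega=\{(t,0):t\ge 0\}\cup\{(0,t):t\ge 0\}$, $\bar z=(0,0)$, $g(z)=-0.9(z_1+z_2)$ and $h(z)=\norm{z}$; then $\bar z$ is a strict local minimizer of $g+h$ over $\Omega$, yet there is no $w\in\B_{\R^2}$ and $\eta\in\widehat N_\Omega(\bar z)=-\R^2_+$ with $\nabla g(\bar z)+w+\eta=0$, since any such $w$ would satisfy $w_1\ge 0.9$, $w_2\ge 0.9$ and hence $\norm{w}\ge 0.9\sqrt 2>1$. This is precisely the structure you face after Ekeland (smooth term, small convex norm penalty centered at the Ekeland point, indicator of the locally closed graph), so the step cannot be justified as written.

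The fix is standard but necessary: use the fuzzy (approximate) sum rule, or the limiting subdifferential sum rule followed by the definition of the limiting normal cone, to obtain a point $(\hat x_k,\hat y_k)\in\gph F$ with $\norm{(\hat x_k,\hat y_k)-(\tilde x_k,\tilde y_k)}\le\nu_k$ (where $\nu_k>0$ can be chosen as small as desired, in particular $\nu_k\le t_k/k$) and $(\eta_k,\zeta_k)\in\widehat N_{\gph F}(\hat x_k,\hat y_k)$ with $\norm{(\eta_k,\zeta_k)+(0,v_k^*)}\le 2\varepsilon_k$. Because $\nu_k=o(t_k)$, the direction vectors $\bigl((\hat x_k-\xb)/t_k,(\hat y_k-\yb)/t_k\bigr)$ still converge to $(u,0)$ with $\norm{u}=1$, so the rest of your argument goes through unchanged. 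Alternatively, one may replace the nonsmooth Ekeland penalty by a smooth one via the Borwein--Preiss variational principle, after which the exact (smooth $+$ indicator) sum rule applies directly. Without one of these repairs the proof is incomplete, even though the architecture is right.
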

In order to define a directional version of metric (sub)regularity, consider for a direction $u\in\R^n$ and positive reals $\rho,\delta$ the set
\[{\cal V}_{\rho,\delta}(u):=\left\{d\in\rho\B_{\R^n}\mv \bnorm{\norm{u}d-\norm{d}u} \leq \delta \norm{u}\norm{d}\right\}.\]
We say that ${\cal V}$ is a {\em directional neighborhood of $u$} if ${\cal V}_{\rho,\delta}(u)\subset {\cal V}$ for some $\rho,\delta>0$.
\begin{definition}\label{DefDirMetrReg}
Let $F:\R^n\tto\R^m$ be a mapping and let $(\xb,\yb)\in \gph F$.
\begin{enumerate}
\item Given a direction $u\in\R^n$ we say that {\em $F$ is metrically subregular in direction $u$ at $(\xb,\yb)$} if \eqref{EqSubReg} holds with $\xb+{\cal U}$  in place of $U$, where ${\cal U}$ is a  directional neighborhood of $u$.
\item Given a direction $(u,v)\in\R^n\times\R^m$ we say that {\em $F$ is metrically regular in direction $(u,v)$ at $(\xb,\yb)$} if there is a directional neighborhoods ${\cal W}$ of $(u,v)$  together with  reals $\kappa\geq 0$ and $\delta> 0$ such that
     \eqref{EqMetrReg} holds for all $(x,y)\in (\xb,\yb)+{\cal W}$ satisfying $\norm{(u,v)}\dist{(x,y),\gph F}\leq \delta\norm{(u,v)}\norm{(x,y)-(\xb,\yb)}$.
\end{enumerate}
\end{definition}
If a mapping $F$ is metrically regular in  direction $(u,0)$ at $(\xb,\yb)$ then it is also metrically subregular in direction $u$, cf. \cite[Lemma 1]{Gfr13a}. Further note that a mapping is always metrically regular in a direction $(u,v)$ at $(\xb,\yb)$ whenever $(u,v)\not\in T_{\gph F}(\xb,\yb)$, i.e., $v\not\in DF(\xb,\yb)(u)$. Similarly, if $0\not\in DF(\xb,\yb)(u)$, then $F$ is metrically subregular in direction $u$ at $(\xb,\yb)$.
\begin{theorem}\label{ThDirMetrReg}Assume that the set-valued mapping $F:\R^n\tto\R^m$ has locally closed graph around $(\xb,\yb)\in\gph F$ and let $u\in\R^n$ be given. Then $F$ is metrically regular in direction $(u,0)$ at $(\xb,\yb)$ if and only if
\begin{equation}\label{EqCondDirMetrReg}
 0\in D^*F\big((\xb,\yb); (u,0)\big)(v^*)\ \Rightarrow\  v^*=0.
\end{equation}
\end{theorem}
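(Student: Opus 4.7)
The plan is to mimic the proof of the standard Mordukhovich criterion (Theorem \ref{ThMetrReg}), only tracking which points stay inside a directional neighborhood of $(u,0)$. The ``$\Leftarrow$'' direction will be the hard one and will require an Ekeland-type construction.

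\textbf{Necessity $(\Rightarrow)$.} Suppose $F$ is metrically regular in direction $(u,0)$ with constants $\kappa,\delta$ and a directional neighborhood $\mathcal{W}$ of $(u,0)$. Take $v^*$ with $0\in D^*F((\xb,\yb);(u,0))(v^*)$. Unravelling Definition \ref{DefGenDeriv}(iv) and the definition of $N_{\gph F}((\xb,\yb);(u,0))$, there exist $t_k\downarrow 0$, $(u_k,v_k)\to(u,0)$, and $(u_k^*,-v_k^*)\in\widehat N_{\gph F}(\xb+t_ku_k,\yb+t_kv_k)$ with $u_k^*\to 0$ and $v_k^*\to v^*$. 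For $k$ large and $s>0$ small, the point $y_k(s):=\yb+t_kv_k-sv^*$ is close enough to $\yb$ that metric regularity produces $\tilde x_k(s)\in F^{-1}(y_k(s))$ with $\|\tilde x_k(s)-(\xb+t_ku_k)\|\le \kappa s\|v^*\|$, and, since $(\tilde x_k(s),y_k(s))$ sits near $(\xb+t_ku_k,\yb+t_kv_k)$, it lies in $(\xb,\yb)+\mathcal{W}$. Plug $(\tilde x_k(s),y_k(s))$ into the Fr\'echet normal inequality and let first $s\downarrow 0$ (after dividing by $s$) and then $k\to\infty$; one obtains $\|v^*\|\le\kappa\lim\|u_k^*\|=0$, i.e.\ $v^*=0$.

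\textbf{Sufficiency $(\Leftarrow)$.} Assume \eqref{EqCondDirMetrReg} and, by contradiction, assume directional metric regularity fails. Then for every $k$ there are $(x_k,y_k)\in (\xb,\yb)+\mathcal{V}_{1/k,1/k}(u,0)$ with $\dist{(x_k,y_k),\gph F}\le\tfrac1k\|(x_k,y_k)-(\xb,\yb)\|$ but $\dist{x_k,F^{-1}(y_k)}> k\,\dist{y_k,F(x_k)}$. Pick $\hat x_k\in F(x_k)$ approximately realizing $\eta_k:=\dist{y_k,F(x_k)}$ and apply Ekeland's variational principle to the function $x\mapsto\dist{y_k,F(x)}$ (or to an equivalent formulation on $\gph F$) with radius $\sim k\eta_k$. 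This yields $(\tilde x_k,\tilde y_k)\in\gph F$ close to $(x_k,y_k)$ with $(\tilde x_k,\tilde y_k)-(\xb,\yb)$ still lying in a slightly enlarged directional neighborhood of $(u,0)$, and such that $(\tilde x_k,\tilde y_k)$ is a local minimizer of a perturbed distance functional on $\gph F$. The optimality condition expressed via $\widehat N_{\gph F}$ produces Fr\'echet normals $(\tilde u_k^*,-\tilde v_k^*)\in\widehat N_{\gph F}(\tilde x_k,\tilde y_k)$ with $\|\tilde v_k^*\|=1$ and $\tilde u_k^*\to 0$ (this is where the ratio $\dist{x_k,F^{-1}(y_k)}/\eta_k\to\infty$ is exploited). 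Extracting a subsequence so that $\tilde v_k^*\to v^*$ with $\|v^*\|=1$ and the normalized displacement $(\tilde x_k-\xb,\tilde y_k-\yb)/\|(\tilde x_k-\xb,\tilde y_k-\yb)\|\to(u,0)/\|(u,0)\|$, we land in $N_{\gph F}((\xb,\yb);(u,0))$ and obtain $0\in D^*F((\xb,\yb);(u,0))(v^*)$ with $v^*\ne 0$, contradicting \eqref{EqCondDirMetrReg}.

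\textbf{Main obstacle.} The delicate step is the sufficiency direction, specifically guaranteeing that the Ekeland perturbations keep the point $(\tilde x_k,\tilde y_k)$ inside some directional neighborhood of $(u,0)$, because the perturbation radius must be large enough to force $\tilde u_k^*\to 0$ yet small compared to $\|(x_k,y_k)-(\xb,\yb)\|$ so that the direction is not destroyed. The quantitative tension ``$\dist{(x_k,y_k),\gph F}\le\tfrac1k\|(x_k,y_k)-(\xb,\yb)\|$'' in the definition of directional metric regularity is precisely what makes both ratios simultaneously controllable, and unpacking this is the core technical content of the argument.
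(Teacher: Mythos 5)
The paper itself does not prove Theorem \ref{ThDirMetrReg}; the proof is a one-line reference to \cite[Theorem 5]{Gfr13a}, so there is no in-paper argument to mirror and you are in effect trying to reconstruct the cited result. Your reconstruction of the necessity direction is essentially correct: the perturbed point $(\xb+t_ku_k,\;\yb+t_kv_k-sv^*)$ has distance at most $s\|v^*\|$ to $\gph F$ while its displacement from $(\xb,\yb)$ has norm of order $t_k\|u\|$, so for $s$ small relative to $t_k$ (assuming $u\neq0$; for $u=0$ the claim degenerates to Theorem \ref{ThMetrReg}) it satisfies both the directional-neighborhood requirement and the proximity inequality in Definition \ref{DefDirMetrReg}(2). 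The division by $s$, passage $s\downarrow0$ and then $k\to\infty$ indeed yields $\|v^*\|^2\le\kappa\lim_k\|u_k^*\|\,\|v^*\|=0$.

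The sufficiency direction, however, is a plan rather than a proof, and the missing piece is not peripheral. You correctly single out the task --- choose an Ekeland radius large enough to force $\tilde u_k^*\to0$ yet small enough to preserve both the direction of $(\tilde x_k,\tilde y_k)-(\xb,\yb)$ and the ratio $\dist{(\tilde x_k,\tilde y_k),\gph F}/\|(\tilde x_k,\tilde y_k)-(\xb,\yb)\|$ --- and you correctly identify the extra inequality in the definition of directional metric regularity as the resource that makes this balancing act possible. But you then stop and call this ``the core technical content'' without carrying it out. That is a genuine gap: closing it requires explicit parameter choices (an Ekeland radius on the order of $k\,\dist{y_k,F(x_k)}$, shown to be $o(\|(x_k,y_k)-(\xb,\yb)\|)$ precisely by combining the failure of regularity with the proximity condition), verification that the Ekeland output stays in a dilated directional neighborhood of $(u,0)$, extraction of a unit Fr\'echet normal with vanishing $x$-component from the penalized minimization, and confirmation that the renormalized displacements $(\tilde x_k-\xb,\tilde y_k-\yb)/\|(\tilde x_k-\xb,\tilde y_k-\yb)\|$ converge to $(u,0)/\|(u,0)\|$. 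Without these estimates, the argument stops exactly where the cited theorem does its work.
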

\begin{proof}
Follows from \cite[Theorem 5]{Gfr13a}.
\end{proof}
Comparing  Definition \ref{DefDirMetrReg} with Definition \ref{DefMetrReg} we see that metric regularity around $(\xb,\yb)$ is equivalent with metric regularity in direction $(0,0)$ at $(\xb,\yb)$. This is reflected also in conditions \eqref{EqCondMetrReg} and \eqref{EqCondDirMetrReg} with $u=0$. Further note that the sufficient condition for metric subregularity of Theorem \ref{ThSubReg} says that mapping $F$ is metrically regular at $(\xb,\yb)$ in every direction $(u,0)$ with $u\not=0$.

The following notion of stability  was introduced by Robinson \cite{Rob76}.
\begin{definition}
Consider the system
\begin{equation}\label{EqInequSystem}h(p,x)\in C\end{equation}
for a mapping $h:P\times \R^n\to \R^m$ and a set $C\subset\R^m$, where $P$ is a topological space and denote
\[S(p):=\{x\in\R^n\mv h(p,x)\in C\},\ p\in P.\]
We say that the system \eqref{EqInequSystem} enjoys the {\em Robinson stability property} at $(\pb,\xb)\in\gph S$ if there are neighborhoods $Q$ of $\pb$, $U$ of $\xb$ and a real $\kappa\geq 0$ such that
\[\dist{x,S(p)}\leq \kappa \dist{h(p,x),C}\ \forall (p,x)\in Q\times U.\]
\end{definition}
Comparing the definition of Robinson stability with that of metric regularity we see that in case when $P=\R^l$ and $h$ is of the form $h(p,x)=\tilde h(x)-p$, the property of Robinson stability of \eqref{EqInequSystem} at $(\pb,\xb)$ is equivalent to metric regularity of the mapping $\tilde h(\cdot)-C$ around $(\xb,\pb)$.
For sufficient conditions for Robinson stability we refer to the recent paper \cite{GfrMo17a}. Here we mention only  the following result.
\begin{theorem}\label{ThRS}Let $(\pb,\xb)\in h^{-1}(C)$ be given and assume that $h$ is differentiable with respect to the second component and both $h$ and $\nabla_2 h$ are continuous, whereas $C$ is closed.  If
\[\nabla_2h(\pb,\xb)^T\mu=0, \mu\in N_C(h(\pb,\xb))\ \Rightarrow\ \mu=0,\]
then the system \eqref{EqInequSystem} enjoys the Robinson stability property at $(\pb,\xb)$.
\end{theorem}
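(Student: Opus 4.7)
The plan is to reduce Robinson stability of the system $h(p,x)\in C$ to classical metric regularity of the canonical perturbation $\hat F(x):=h(\pb,x)-C$ around $(\xb,0)$, and then extend the resulting estimate to nearby $p$ via a single-valued Lipschitz perturbation argument.

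First, I would apply the Mordukhovich criterion (Theorem~\ref{ThMetrReg}) to $\hat F$. Since $\gph\hat F=\{(x,y)\mv h(\pb,x)-y\in C\}$ is the preimage of $C$ under the smooth submersion $\Phi(x,y):=h(\pb,x)-y$, the pre-image rule for limiting normals yields
\[
N_{\gph\hat F}(\xb,0)=\bigl\{\bigl(\nabla_2 h(\pb,\xb)^T\mu,\,-\mu\bigr)\mv \mu\in N_C(h(\pb,\xb))\bigr\},
\]
so $D^*\hat F(\xb,0)(\mu)=\{\nabla_2 h(\pb,\xb)^T\mu\}$ for $\mu\in N_C(h(\pb,\xb))$ and $\emptyset$ otherwise. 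The hypothesis says precisely that $0\in D^*\hat F(\xb,0)(\mu)$ forces $\mu=0$, so Theorem~\ref{ThMetrReg} gives metric regularity of $\hat F$ around $(\xb,0)$ with some modulus $\kappa_0$ and neighborhoods $U_0\times V_0$ of $(\xb,0)$.

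Second, I write $H_p(x):=h(p,x)-C=\hat F(x)+g_p(x)$ with the single-valued perturbation $g_p(x):=h(p,x)-h(\pb,x)$. By continuity of $\nabla_2 h$, on a sufficiently small closed ball around $\xb$ the Lipschitz modulus of $g_p$ is bounded by $\sup_{x}\norm{\nabla_2 h(p,x)-\nabla_2 h(\pb,x)}$, which tends to $0$ as $p\to\pb$; I shrink a neighborhood $Q$ of $\pb$ until this modulus lies strictly below $1/\kappa_0$. The classical stability of metric regularity under single-valued Lipschitz perturbations (Lyusternik--Graves perturbation theorem) then yields, uniformly in $p\in Q$, metric regularity of $H_p$ around $(\xb,g_p(\xb))$ with modulus $\kappa\le 2\kappa_0$ on uniform neighborhoods. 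Since $g_p(\xb)=h(p,\xb)-h(\pb,\xb)\to 0$ as $p\to\pb$ by continuity of $h$, after further shrinking $Q$ I may set $y=0$ in the metric regularity inequality for $H_p$, obtaining
\[
\dist{x,\,H_p^{-1}(0)}\le\kappa\,\dist{0,\,H_p(x)}=\kappa\,\dist{h(p,x),\,C}
\]
on some product neighborhood $Q\times U$ of $(\pb,\xb)$. As $H_p^{-1}(0)=S(p)$, this is exactly the claimed Robinson stability estimate.

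The main obstacle is the uniform perturbation step: one has to ensure that the neighborhoods and modulus delivered by Lyusternik--Graves can be chosen independently of $p\in Q$, and that the shifted image center $g_p(\xb)$ stays within the region where the perturbed estimate is valid as $p$ varies. Both points reduce to the fact that the Lyusternik--Graves constants depend only on $\kappa_0$ and on the Lipschitz modulus of $g_p$, quantities uniformly controlled in $p$ thanks to the joint continuity of $h$ and $\nabla_2 h$.
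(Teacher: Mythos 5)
Your proof is correct, but it follows a genuinely different route than the paper, whose ``proof'' is a one-line citation to \cite[Corollary 3.6]{GfrMo17a} (a general, directionally refined criterion for Robinson stability that the paper also uses later, in Theorem \ref{ThRelAubin}). You instead give a self-contained argument: freeze the parameter at $\pb$, observe that $\gph\hat F=\Phi^{-1}(C)$ with $\Phi(x,y)=h(\pb,x)-y$ a $C^1$ submersion (the $-I$ block makes $\nabla\Phi$ surjective, so the pre-image rule holds with equality and the closed-graph hypothesis of Theorem \ref{ThMetrReg} is satisfied), read off the coderivative, and invoke the Mordukhovich criterion to get metric regularity of $\hat F$; then treat $H_p=\hat F+g_p$ with $g_p(x)=h(p,x)-h(\pb,x)$ as a single-valued perturbation and apply the Lyusternik--Graves perturbation theorem. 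The uniformity issue you flag is real but handled correctly: the extended Lyusternik--Graves theorem produces neighborhood radii and modulus depending only on the original radii, $\kappa_0$, and the Lipschitz bound $\ell$, so once $\sup_{x\in U}\norm{\nabla_2 h(p,x)-\nabla_2 h(\pb,x)}<\ell<1/\kappa_0$ and $\norm{g_p(\xb)}=\norm{h(p,\xb)-h(\pb,\xb)}$ is smaller than the $y$-radius (both achievable by joint continuity of $\nabla_2 h$ and $h$), the estimate at $y=0$ holds on a product neighborhood $Q\times U$ of $(\pb,\xb)$, and $H_p^{-1}(0)=S(p)$ gives Robinson stability. What your approach buys is complete transparency using only two classical black boxes; what the paper's approach buys is that the cited corollary from \cite{GfrMo17a} is sharper (it needs a coderivative condition only for unit normals arising from directions in which the constraint is genuinely active) and is in any case required machinery for the more refined relative-Aubin result later in the paper, so invoking it here costs nothing extra.
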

\begin{proof}
  Follows immediately from \cite[Corollary 3.6]{GfrMo17a}.
\end{proof}
We now turn to Lipschitzian properties of set-valued mappings.
\begin{definition}
  Let $S:\R^m\tto\R^n$ be a set-valued map and let $(\yb,\xb)\in\gph S$.
  \begin{enumerate}
  \item $S$ is called to be {\em calm} at $(\yb,\xb)$ if there is a neighborhood $U$ of $\xb$ together with a real $L\geq 0$ such that
  \[S(y)\cap U\subset S(\yb)+L\norm{y-\yb}\B_{\R^n}\ \forall y\in \R^m.\]
  If, in addition, $S(\yb)=\{\xb\}$ is a singleton we say that 
  $S$ has the {\em isolated calmness property} at $(\yb,\xb)$.
  \item
  Given a set $Y\subset\R^m$ containing $\yb$, the mapping $S$ is said to have the {\em Aubin property relative to $Y$ around $(\yb,\xb)$} if there are neighborhoods $V$ of $\yb$, $U$ of $\xb$ and a real $L\geq 0$ such that
  \[S(y)\cap U\subset S(y')+L\norm{y-y'}\B_{\R^n}\ \forall y,y'\in Y\cap V.\]
  This condition with $V$ in place of $Y\cap V$ is simply the {\em Aubin propery around $(\yb,\xb)$}.
  \end{enumerate}
\end{definition}
It is well-known \cite{DoRo04} that $F$ is metrically subregular at $(\xb,\yb)$ if and only if its inverse mapping $F^{-1}$ is calm at $(\yb,\xb)$. Further,  metric regularity is equivalent with the Aubin property of the inverse mapping.

\subsection{Polyhedral sets}

Recall that a set $D\subset \R^s$ is said to be {\em convex polyhedral} if it can be represented as the intersection of finitely many halfspaces. We say that a set $E\subset \R^s$ is polyhedral if it is the union of finitely many convex polyhedral sets. If a set $E$ is polyhedral, then for every $\zb\in E$ there is some neighborhood $W$ of $\zb$ such that
\[(E-\zb)\cap W= T_E(\zb)\cap W.\]
Given a convex polyhedral set $D$ and a point $\zb\in D$, then the tangent cone $T_D(\zb)$ and the normal cone $N_D(\zb)$ are convex polyhedral cones and there is a neighborhood $W$ of $\zb$ such that
\[ T_D(z)=T_D(\zb)+[z-\zb]\supset T_D(\zb),\ N_D(z)=N_D(\zb)\cap [z-\zb]^\perp\subset N_D(\zb)\ \forall z\in W.\]
The graph of the normal cone mapping to $D$ is a polyhedral set and for every pair $(z,z^*)\in\gph N_D$ we have
\begin{equation}\label{EqTangConeGraphNormalCone} T_{\gph N_D}(z,z^*)=\gph N_{\K_D(z,z^*)},\end{equation}
see, e.g., \cite[Lemma 2E.4]{DoRo14}.

For two convex polyhedral cones $K_1,K_2\subset\R^s$ their polars as well as their sum $K_1+K_2$ and their intersection $K_1\cap K_2$ are again convex polyhedral cones and
\[(K_1+K_2)^\circ =K_1^\circ\cap K_2^\circ,\ (K_1\cap K_2)^\circ = K_1^\circ+ K_2^\circ.\]
For a convex polyhedral cone $K\subset\R^s$ and a point $z\in K$  we have
\[T_K(z)=K+[z],\ N_K(z)=K^\circ\cap [z]^\perp.\]
A face $\F$ of  $K$ can always be written in the form
\[\F=K\cap [z^*]^\perp\]
for some $z^*\in K^\circ$. The cone $K$ has the representation
\begin{equation}\label{EqReprPolyCone}K=\left\{z\in\R^s\mv a_i^Tz=0,\ i\in \bar J,\ a_i^Tz\leq 0\ i\in\bar I\setminus\bar J\right\},\end {equation}
where $\bar J\subset \bar I$ are two finite index sets and $a_i\in\R^s$, $i\in \bar I$. By enlarging $\bar J$ when necessary we can assume that there exists some $z_0$ such that $a_i^Tz_0=0$, $i\in\bar J$, $a_i^Tz_0<0$, $i\in \bar I\setminus\bar J$. Then a subset $\F\subset K$ is a face if and only if there is some index set $J$, $\bar J\subset J\subset\bar I$ such that
\[\F=\left\{z\in\R^s\mv  a_i^Tz=0,\ i\in  J,\ a_i^Tz\leq 0\ i\in\bar I\setminus J\right\}.\]
By possibly enlarging $J$ we can find a unique index set, denoted by $J_\F$, such that
\begin{equation}\label{EqFaceIndexSet}
\ri \F=\left\{z\in\R^s\mv  a_i^Tz=0,\ i\in  J_\F,\ a_i^Tz< 0\ i\in\bar I\setminus J_\F\right\}.\end{equation}
It follows that
\[\F-\F=\{z\in\R^s\mv a_i^Tz=0,\ i\in J_\F\}.\]

\subsection{Directional non-degeneracy}
In what follows the property of directional metric (sub)regularity of a particular mapping will play an important role.
Let $D\subset \R^s$ be a convex polyhedral set, let $\tilde g: \R^m \to \R^s$ be  continuously differentiable and consider
the mapping $F:\R^m\times\R^s\tto\R^s\times\R^s$ given by
\begin{equation} \label{EqTildeF}
F(y,\lambda):=\big(\tilde g(y), \lambda \big)-\gph N_{D}.
\end{equation}
Given some point $(\yb,\lambda)\in F^{-1}(0)$ and some direction $(v,\eta)\in\R^m\times\R^s$, we we want to investigate metric subregularity of $F$ in direction $(v,\eta)$ at $(\yb,\lambda)$, in particular when $v\not=0$.
We denote
\[\Theta(\yb,v):=\{(\lambda,\eta)\in N_D(\tilde g(\yb))\times\R^s\mv (\nabla \tilde g(\yb)v,\eta)\in \gph N_{\K_{D}(\tilde g(\yb),\lambda)}\},\ (\yb,v)\in\tilde g^{-1}(D)\times\R^m.\]
Recall that $F$ is by definition metrically subregular in direction $(v,\eta)$ at $(\yb,\lambda)$ whenever
\[(0,0)\not\in DF\big((\yb,\lambda),(0,0)\big)(v,\eta)=(\nabla \tilde g(\yb)v,\eta)- T_{\gph N_{D}}(\tilde g(\yb),\lambda)\ \Leftrightarrow\ (\nabla \tilde g(\yb)v,\eta)\not\in T_{\gph N_{D}}(\tilde g(\yb),\lambda),\]
i.e., taking into account \eqref{EqTangConeGraphNormalCone}, whenever $(\lambda,\eta)\not\in\Theta(\yb,v)$.

In our analysis we restrict ourselves to the  characterization of  metric regularity of $F$ in directions $\big((v,\eta),(0,0)\big)$ which implies metric subregularity of $F$ in direction $(v,\eta)$. The following lemma is a slight generalization of \cite[Proposition 2]{GfrOut18}.
\begin{lemma}\label{LemF_DirMetrReg}Let $\yb\in\tilde g^{-1}(D)$, $v\in\R^m$  and $(\lambda,\eta)\in\Theta(\yb,v)$ be given. Then the mapping $F$ defined in \eqref{EqTildeF} is metrically regular in direction $\big((v,\eta),(0,0)\big)$ at $\big((\yb,\lambda),(0,0)\big)$ if and only if for every face $\F$ of the critical cone $\K_{D}\big(\tilde g( \yb), \lambda\big)$ with $\nabla \tilde g( \yb)(v)\in \F\subset[\eta]^\perp$ one has
  \begin{equation*}
    \nabla \tilde g(\yb)^T\mu=0,\ \mu\in (\F-\F)^\circ\ \Rightarrow\ \mu=0.
  \end{equation*}
\end{lemma}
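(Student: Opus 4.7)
The plan is to invoke the directional Mordukhovich-type criterion of Theorem~\ref{ThDirMetrReg} and then reduce the resulting coderivative implication to a condition on the directional limiting normal cone to $\gph N_{D}$, whose face decomposition for polyhedral $D$ yields the asserted face-wise characterization.

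First, by Theorem~\ref{ThDirMetrReg}, metric regularity of $F$ in direction $\bigl((v,\eta),(0,0)\bigr)$ at $\bigl((\yb,\lambda),(0,0)\bigr)$ amounts to the implication
\[
 0\in D^{\ast}F\bigl(((\yb,\lambda),(0,0));((v,\eta),(0,0))\bigr)(w^{\ast})\ \Longrightarrow\ w^{\ast}=0.
\]
To compute the left-hand side I write $\gph F=\Phi^{-1}(\gph N_{D})$ for the smooth mapping $\Phi(y,\lambda,w_1,w_2):=(\tilde g(y)-w_1,\lambda-w_2)$. Since $\nabla\Phi(\yb,\lambda,0,0)$ has $-I_{2s}$ in its last $2s$ columns, it is surjective, so a standard directional chain rule for limiting normal cones gives
\[
 N_{\gph F}\bigl((\yb,\lambda,0,0);(v,\eta,0,0)\bigr)=\nabla\Phi(\yb,\lambda,0,0)^{T}\,N_{\gph N_{D}}\bigl((\tilde g(\yb),\lambda);(\nabla\tilde g(\yb)v,\eta)\bigr).
\]
Unwinding Definition~\ref{DefGenDeriv}(iv), the above implication becomes: whenever $(w_1^{\ast},w_2^{\ast})\in N_{\gph N_{D}}\bigl((\tilde g(\yb),\lambda);(\nabla\tilde g(\yb)v,\eta)\bigr)$, $\nabla\tilde g(\yb)^{T}w_1^{\ast}=0$, and $w_2^{\ast}=0$, one necessarily has $w_1^{\ast}=0$.

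Second, I exploit the polyhedral structure of $\gph N_{D}$. Locally around $(\tilde g(\yb),\lambda)$, $\gph N_{D}$ coincides with $\gph N_{\K}$ for the critical cone $\K:=\K_{D}(\tilde g(\yb),\lambda)$, and $\gph N_{\K}$ is the finite union of convex polyhedral pieces $\F\times(\K^{\circ}\cap\F^{\perp})$ indexed by the faces $\F$ of $\K$. By computing Fr\'echet normal cones at scaled points $t_k(\delta z_k,\delta z_k^{\ast})\in\gph N_{\K}$ with $(\delta z_k,\delta z_k^{\ast})\to(\nabla\tilde g(\yb)v,\eta)$, $t_k\downarrow 0$, and then passing to the outer limit, one obtains that the pair $(w_1^{\ast},0)$ lies in the directional limiting normal cone if and only if there is a face $\F$ of $\K$ with $\nabla\tilde g(\yb)v\in\F\subseteq[\eta]^{\perp}$ such that $w_1^{\ast}\in(\F-\F)^{\perp}$; since $\F-\F$ is a linear subspace, this is the same as $w_1^{\ast}\in(\F-\F)^{\circ}$.

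Substituting this description into the reduced implication from the first step yields exactly the statement of the lemma. The main obstacle is the polyhedral computation in the second step, i.e., correctly identifying which vectors $(w_1^{\ast},0)$ can occur as directional limiting normals to $\gph N_{D}$ and checking that the relevant ``first-component'' slice collapses to the lineality-type pieces $(\F-\F)^{\circ}$. This face analysis follows the pattern of \cite[Proposition~2]{GfrOut18}, of which the present statement is explicitly noted to be a slight generalization.
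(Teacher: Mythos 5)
Your proposal follows essentially the same route as the paper's proof: invoke the directional Mordukhovich-type criterion of Theorem~\ref{ThDirMetrReg}, reduce via a chain rule (the paper hides this in the reference to \cite[Theorem~1]{GfrKl16}) to an implication on $N_{\gph N_{D}}\big((\tilde g(\yb),\lambda);(\nabla\tilde g(\yb)v,\eta)\big)$, and then translate that cone into a face condition. The only presentational difference is that the paper cites \cite[Theorem~2.12]{GfrOut16}, which gives the union of $K^{\circ}\times K$ over $K=\F_1-\F_2$ for pairs of faces $\F_2\subset\F_1$, and then passes from pairs to a single face via $(\F_1-\F_2)^{\circ}\subset(\F_2-\F_2)^{\circ}$, whereas you state the single-face slice $w_1^{\ast}\in(\F-\F)^{\circ}$ directly and sketch the polyhedral computation behind it; that slice claim is correct, but it implicitly uses the same pair-to-single-face reduction that the paper makes explicit.
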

\begin{proof}
  The characterization  \eqref{EqCondDirMetrReg} reads in our special case as
  \[(\nabla \tilde g(\yb)^T\mu,\xi)=(0,0),\ (\mu,\xi)\in N_{\gph N_{D}}\big((\tilde g(\yb),\lambda), (\nabla \tilde g(\yb)v,\eta)\big)\ \Rightarrow\ (\mu,\xi)=(0,0),\]
  see also \cite[Theorem 1]{GfrKl16}. By \cite[Theorem 2.12]{GfrOut16}, $N_{\gph N_{D}}\big((\tilde g(\yb),\lambda), (\nabla \tilde g(\yb)v,\eta)\big)$ amounts to the union of all product sets $K^\circ\times K$ associated with cones $K$ of the form $\F_1-\F_2$, where $\F_1,\F_2$ are faces of the critical cone
$\K_{D}\big(\tilde g(\yb),\lambda\big)$ with $\nabla \tilde g(\yb)(v)\in \F_2\subset \F_1\subset[\eta]^\perp$. Thus, by Theorem \ref{ThDirMetrReg} the claimed directional metric regularity is equivalent to the condition that the implication
\[\nabla \tilde g(\yb)^T\mu=0,\ \mu\in (\F_1-\F_2)^\circ\ \Rightarrow\ \mu=0\]
holds for every pair of faces $\F_1,\F_2$  with $\nabla \tilde g(\yb)(v)\in \F_2\subset \F_1\subset[\eta]^\perp$. By taking into account that $(\F_1-\F_2)^\circ\subset (\F_2-\F_2)^\circ$, the statement of the lemma follows.
\end{proof}
This characterization of directional metric regularity can be considerably simplified.
\begin{theorem}\label{ThDirNonDegen}
  Let $\yb\in\tilde g^{-1}(D)$ and $v\in\R^m$ be given and assume that $\Theta(\yb,v)\not=\emptyset$. Then the following statements are equivalent:
  \begin{enumerate}
    \item There is some $(\lb,\bar \eta)\in\Theta(\yb,v)$ such that the mapping $F$ given by \eqref{EqTildeF} is metrically regular in direction $\big((v,\bar \eta),(0,0)\big)$ at $\big((\yb,\lb),(0,0)\big)$.
    \item The mapping $F$ given by \eqref{EqTildeF} is metrically regular in direction $\big((v,\eta),(0,0)\big)$ at $\big((\yb,\lambda),(0,0)\big)$ for every $(\lambda,\eta)\in\Theta(\yb,v)$.
    \item
   \begin{equation}\label{EqDirNonDegen}\nabla\tilde g(\yb)^T\mu=0,\ \mu\in\Span N_{T_D(\tilde g(\yb))}(\nabla \tilde g(\yb)v)\ \Rightarrow\ \mu=0.\end{equation}
  \end{enumerate}
\end{theorem}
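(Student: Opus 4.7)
The plan is to prove the cycle $(2)\Rightarrow(1)\Rightarrow(3)\Rightarrow(2)$. The implication $(2)\Rightarrow(1)$ is immediate from the assumption $\Theta(\yb,v)\neq\emptyset$. The remaining two directions both rest on Lemma \ref{LemF_DirMetrReg} combined with the index-set representation of faces of the polyhedral cone $K:=T_D(\tilde g(\yb))$ from \eqref{EqReprPolyCone}--\eqref{EqFaceIndexSet}. Throughout I set $w:=\nabla\tilde g(\yb)v$ and $I(w):=\{i\in\bar I\mv a_i^T w=0\}$; note $\bar J\subset I(w)$ since $w\in K$.

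For $(3)\Rightarrow(2)$, I fix an arbitrary $(\lambda,\eta)\in\Theta(\yb,v)$ and an arbitrary face $\F$ of $\K_D(\tilde g(\yb),\lambda)=K\cap[\lambda]^\perp$ with $\nabla\tilde g(\yb)v\in\F\subset[\eta]^\perp$. Because $\lambda\in K^\circ$, the critical cone is the face of $K$ exposed by $\lambda$, so $\F$ is itself a face of $K$. By \eqref{EqFaceIndexSet}, $(\F-\F)^\circ=\Span\{a_i\mv i\in J_\F\}$, and for every $i\in J_\F$ we have $a_i\in K^\circ$ together with $a_i^T w=0$ (from $w\in\F$); hence $a_i\in K^\circ\cap[w]^\perp=N_K(w)$. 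This yields $(\F-\F)^\circ\subset\Span N_K(w)$, and combined with (3) it delivers exactly the implication demanded by Lemma \ref{LemF_DirMetrReg}.

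The delicate direction is $(1)\Rightarrow(3)$. Given $(\lb,\bar\eta)\in\Theta(\yb,v)$ for which the directional metric regularity in (1) holds, I would single out the face $\F$ of $K$ characterized by $J_\F=I(w)$, i.e., the minimal face of $K$ containing $w$ (so $w\in\ri\F$). Three items then need verification. First, expanding any $\mu\in N_K(w)=K^\circ\cap[w]^\perp$ in the generators $\{a_i\mv i\in\bar I\setminus\bar J\}\cup\{\pm a_i\mv i\in\bar J\}$ of $K^\circ$ and using $\mu^T w=0$ with $a_i^T w\leq 0$ forces every nonzero coefficient to have index in $I(w)$, which gives $\Span N_K(w)=\Span\{a_i\mv i\in I(w)\}=(\F-\F)^\circ$. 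Second, the identical expansion applied to $\lb$ (recall $\lb\in K^\circ$, $\lb^T w=0$) forces $\lb\in(\F-\F)^\circ$, hence $\F-\F\subset[\lb]^\perp$, so $\F$ really is a face of $\K_D(\tilde g(\yb),\lb)$. Third, since $\bar\eta\in N_{\K_D(\tilde g(\yb),\lb)}(w)$, it is orthogonal to the affine hull of the smallest face of $\K_D(\tilde g(\yb),\lb)$ containing $w$, and that face is precisely $\F$; thus $\F\subset[\bar\eta]^\perp$. Feeding this particular $\F$ into Lemma \ref{LemF_DirMetrReg} converts (1) directly into \eqref{EqDirNonDegen}.

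The main obstacle I anticipate is the index-set bookkeeping, above all the observation that $\lb\in K^\circ$ together with $\lb^T w=0$ pins the support of any generator expansion of $\lb$ inside $I(w)$. This is the fact that makes the minimal face of $K$ containing $w$ automatically a face of every critical cone $\K_D(\tilde g(\yb),\lambda)$ that is compatible with $w$, and thereby explains why the compact condition \eqref{EqDirNonDegen} is independent of the choice of multiplier $(\lambda,\eta)\in\Theta(\yb,v)$.
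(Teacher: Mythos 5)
Your proof is correct and follows essentially the same route as the paper. Both arguments hinge on the minimal face of $T_D(\tilde g(\yb))$ containing $w=\nabla\tilde g(\yb)v$ (the paper's $\F_v$, with $J_{\F_v}=\J(v)$ in the paper's notation, $I(w)$ in yours), its admissibility for every $(\lambda,\eta)\in\Theta(\yb,v)$, and the identity $\Span N_{T_D(\tilde g(\yb))}(w)=(\F_v-\F_v)^\circ$; the paper establishes $(i)\Leftrightarrow(ii)$ by showing the Lemma-criterion for any $(\lambda,\eta)$ collapses to the single implication for $\F_v$ and then identifies that set with $\Span N_{T_D(\tilde g(\yb))}(w)$, whereas you organize this as the cycle $(2)\Rightarrow(1)\Rightarrow(3)\Rightarrow(2)$—a cosmetic rearrangement of the same underlying bookkeeping.
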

\begin{proof}Assume that the tangent cone $T_D(\tilde g(\yb))$ has the representation \eqref{EqReprPolyCone}
 and consider any $(\lambda,\eta)\in\Theta(\yb,v)$. Since $\eta\in N_{\K_D(\tilde g(\yb),\lambda)}(\nabla \tilde g(\yb)v)=\K_D(\tilde g(\yb),\lambda)^\circ\cap [\nabla \tilde g(\yb)v]^\perp$,  $\nabla \tilde g(\yb)v$ is contained in the face $\K_D(\tilde g(\yb),\lambda)$ of $T_D(\tilde g(\yb))$ and therefore
\[\J(v):=\{i\in \bar I\mv a_i^T\nabla \tilde g(y)v=0\}\supset J_\lambda:=J_{\K_D(\tilde g(\yb),\lambda)},\]
where $J_{\K_D(\tilde g(\yb),\lambda)}$ is given by \eqref{EqFaceIndexSet}.
Further, $\eta$ has the representation $\eta=\sum_{i\in I_\eta}a_i^T\sigma_i$ with $J_\lambda\subset I_\eta\subset \J(v)$ and $\sigma_i>0$, $i\in I_\eta\setminus J_\lambda$. Next consider any face $\F$ of the critical cone $\K_D(\tilde g(\yb),\lambda)$ satisfying $\nabla \tilde g(\yb)v\in\F\subset[\eta]^\perp$. Then $\F$ is again a face of $T_D(\tilde g(y))$ and from  $\nabla \tilde g(\yb)v\in\F\subset[\eta]^\perp$ we deduce
\[I_\eta\subset J_\F\subset \J(v).\]
Thus
\[\F\supset \F_v:=\left\{z\mv a_i^Tz=0,\ i\in\J(v),\ a_i^Tz\leq 0,\ i\in \bar I\setminus \J(v)\right\}\]
and therefore $(\F-\F)^\circ\subset (\F_v-\F_v)^\circ$. Since $\F_v$ is also a face of $\K_D(\tilde g(\yb),\lambda)$ satisfying $\nabla \tilde g(\yb)v\in\F_v\subset[\eta]^\perp$, by Lemma \ref{LemF_DirMetrReg} $F$ is metrically regular in direction $\big((v,\eta),(0,0)\big)$ at $\big((\yb,\lambda),(0,0)\big)$ if and only if
  \begin{equation}
    \label{EqDirMetrRegSimpl}\nabla \tilde g(\yb)^T\mu=0,\ \mu\in (\F_v-\F_v)^\circ\ \Rightarrow\ \mu=0.
  \end{equation}
Since $\F_v$ depends neither on $\lambda$ nor on $\eta$, the equivalence between (i) and (ii) is established. To show the equivalence of \eqref{EqDirMetrRegSimpl} with \eqref{EqDirNonDegen} just observe that $\F_v-\F_v=\{z\mv a_i^Tz=0, i\in \J(v)\}$ implying $(\F_v-\F_v)^\circ=\{\sum_{i\in\J(v)}\sigma_ia_i\mv \sigma_i\in\R, i\in \J(v)\}$ and
$N_{T_D(\tilde g(\yb))}(\nabla \tilde g(\yb)v)=\{\sum_{i\in\J(v)}\sigma_ia_i\mv \sigma_i\geq 0, i\in \J(v)\}$. Thus  $\Span N_{T_D(\tilde g(\yb))}(\nabla \tilde g(\yb)v)=(\F_v-\F_v)^\circ$ and the proof is complete.
\end{proof}
From the proof of Theorem \ref{ThDirNonDegen} we also obtain  the following corollary.
\begin{corollary}\label{CorUnionFaces}
   Let $\yb\in\tilde g^{-1}(D)$, $\lambda\in N_D(\tilde g(\yb))$, $v\in\R^l$ and $\eta\in N_{\K_D(\tilde g(\yb),\lambda)}(\nabla \tilde g(\yb)v)$ be given. Then the union of all sets $(\F-\F)^\circ$, where $\F$ is a face of the critical cone $\K_D(\tilde g(\yb),\lambda)$ satisfying $\nabla \tilde g(\yb)v\subset\F\subset[\eta]^\perp$, is exactly $\Span N_{T_D(\tilde g(\yb))}(\nabla \tilde g(\yb)v)$.
\end{corollary}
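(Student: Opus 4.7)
The plan is to extract the statement directly from the machinery already built in the proof of Theorem \ref{ThDirNonDegen}. There, starting from the representation \eqref{EqReprPolyCone} of $T_D(\tilde g(\yb))$, one introduces the active index set
\[
\J(v):=\{i\in\bar I\mv a_i^T\nabla\tilde g(\yb)v=0\}
\]
and the associated face
\[
\F_v:=\{z\in\R^s\mv a_i^Tz=0,\ i\in\J(v),\ a_i^Tz\le 0,\ i\in\bar I\setminus\J(v)\}
\]
of $T_D(\tilde g(\yb))$. The key observation is that $\F_v$ is simultaneously the smallest face of $\K_D(\tilde g(\yb),\lambda)$ containing $\nabla\tilde g(\yb)v$ and lying in $[\eta]^\perp$, and is itself such a face.

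First I would verify the containments already recorded in the proof of Theorem \ref{ThDirNonDegen}: from $\nabla \tilde g(\yb)v\in\F\subset[\eta]^\perp$ together with the representation of $\eta$ as a nonnegative combination of $a_i$ with indices in $\J(v)$, one gets $I_\eta\subset J_\F\subset\J(v)$. Combining this with the description of faces by means of index sets one concludes $\F\supset\F_v$, and hence
\[
(\F-\F)^\circ\subset (\F_v-\F_v)^\circ
\]
for every face $\F$ of $\K_D(\tilde g(\yb),\lambda)$ with $\nabla\tilde g(\yb)v\in\F\subset[\eta]^\perp$. Since $\F_v$ itself satisfies these requirements (with $\eta\in N_{\K_D(\tilde g(\yb),\lambda)}(\nabla\tilde g(\yb)v)$ guaranteeing $\F_v\subset[\eta]^\perp$), the union of the sets $(\F-\F)^\circ$ over all admissible faces $\F$ equals exactly $(\F_v-\F_v)^\circ$.

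Finally, I would reproduce the explicit computation from the last lines of the proof of Theorem \ref{ThDirNonDegen}: since $\F_v-\F_v=\{z\mv a_i^Tz=0,\ i\in\J(v)\}$, one has
\[
(\F_v-\F_v)^\circ=\Big\{\sum_{i\in\J(v)}\sigma_ia_i\mv \sigma_i\in\R\Big\},
\]
whereas
\[
N_{T_D(\tilde g(\yb))}(\nabla\tilde g(\yb)v)=\Big\{\sum_{i\in\J(v)}\sigma_ia_i\mv \sigma_i\ge 0\Big\},
\]
so $\Span N_{T_D(\tilde g(\yb))}(\nabla\tilde g(\yb)v)=(\F_v-\F_v)^\circ$, which completes the argument. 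The only nontrivial point is really the identification of $\F_v$ as the minimal admissible face, and this was already carried out inside the proof of Theorem \ref{ThDirNonDegen}; thus the corollary is essentially a book-keeping consequence and I expect no additional obstacles beyond citing the relevant lines of that proof.
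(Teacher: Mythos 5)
Your argument is correct and is precisely the one the paper intends (the paper states the corollary with only the remark that it follows from the proof of Theorem \ref{ThDirNonDegen}): you correctly identify $\F_v$ as the smallest admissible face, note that it belongs to the admissible family, and reuse the identity $(\F_v-\F_v)^\circ=\Span N_{T_D(\tilde g(\yb))}(\nabla\tilde g(\yb)v)$ established at the end of that proof.
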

\begin{definition}\label{DefDirNonDegen} Let $\yb\in\tilde g^{-1}(D)$ and $v\in\R^m$ be given.
 We say that the system $\tilde g(\cdot)\in D$ is {\em non-degenerate in direction $v$ at $\yb$} if condition \eqref{EqDirNonDegen} is fulfilled.
  In case when $v=0$ we simply say that the system $\tilde g(\cdot)\in D$ is non-degenerate at $\yb$.
\end{definition}
Note that \eqref{EqDirNonDegen} is automatically fulfilled if $\nabla \tilde g(\yb)v\not\in T_D(\tilde g(\yb))$. Further, if $\nabla \tilde g(\yb)v\in T_D(\tilde g(\yb))$, then \eqref{EqDirNonDegen} is equivalent to
\begin{eqnarray*}\R^s&=&\{0\}^\perp=\Big(\ker \nabla\tilde g(\yb)^T\cap \Span N_{T_D(\tilde g(\yb))}(\nabla \tilde g(\yb)v))\Big)^\perp=\nabla\tilde g(\yb)\R^m+ \Big(\Span N_{T_D(\tilde g(\yb))}(\nabla \tilde g(\yb)v)\Big)^\perp\\
&=&\nabla\tilde g(\yb)\R^m+ \Big(N_{T_D(\tilde g(\yb))}(\nabla \tilde g(\yb)v)-N_{T_D(\tilde g(\yb))}(\nabla \tilde g(\yb)v)\Big)^\circ\\
&=&\nabla\tilde g(\yb)\R^m+ \big(N_{T_D(\tilde g(\yb))}(\nabla \tilde g(\yb)v)\big)^\circ\cap\big(-N_{T_D(\tilde g(\yb))}(\nabla \tilde g(\yb)v)\big)^\circ\end{eqnarray*}
which in turn is equivalent to
\begin{equation}\label{EqDirNonDegenPrimal}
\nabla\tilde g(\yb)\R^m+\lin T_{T_D(\tilde g(\yb))}(\nabla \tilde g(\yb)v)=\R^s.
\end{equation}
Clearly, for $v=0$ we obtain the standard definition of non-degeneracy from \cite[Formula 4.17]{BonSh00}.

We now state some properties of directional non-degeneracy.
\begin{proposition}\label{PropDirNonDegen}
  Let $\yb\in \tilde g^{-1}(D)$ and $v\in\R^n$ such that the system $\tilde g(\cdot)\in D$ is non-degenerate in direction $v$ at $\yb$. Then
  there is a directional neighborhood ${\cal V}$ of $v$ and a constant $\beta>0$ such that for all $y\in \big((\yb+{\cal V})\cap \tilde g^{-1}(D)\big)$, $y\not=\yb$, one has
  \begin{equation}\label{EqUniformNonDegen}
    \norm{\nabla \tilde g(y)^T\mu}\geq \beta\norm{\mu}\ \forall \mu\in \Span N_D(\tilde g(y)).
  \end{equation}
  In particular, for all $y\in \big((\yb+{\cal V})\cap \tilde g^{-1}(D)\big)$, $y\not=\yb$, the system $\tilde g(\cdot)\in D$ in non-degenerate at $y$.
\end{proposition}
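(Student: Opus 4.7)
The plan is to argue by contradiction. Suppose the conclusion fails. Then for every $k\in\N$ there exist $y_k\in\big(\yb+{\cal V}_{1/k,1/k}(v)\big)\cap\tilde g^{-1}(D)$ with $y_k\neq\yb$ and $\mu_k\in\Span N_D(\tilde g(y_k))$ satisfying $\norm{\nabla \tilde g(y_k)^T\mu_k}<(1/k)\norm{\mu_k}$. After normalization we may assume $\norm{\mu_k}=1$. Writing $y_k=\yb+t_kw_k$ with $t_k:=\norm{y_k-\yb}\downarrow 0$, the definition of ${\cal V}_{1/k,1/k}(v)$ forces $w_k$ to align with $v$ in the sense that $(y_k-\yb)/t_k\to v/\norm{v}$ when $v\ne 0$ (and reduces to plain convergence when $v=0$). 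A first-order expansion then yields $(\tilde g(y_k)-\tilde g(\yb))/t_k\to\nabla\tilde g(\yb)v$, and since $\tilde g(y_k),\tilde g(\yb)\in D$ this in particular forces $\nabla\tilde g(\yb)v\in T_D(\tilde g(\yb))$.

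The decisive step will be to show that, for $k$ large enough, every span $\Span N_D(\tilde g(y_k))$ is contained in the fixed finite-dimensional subspace
\[L:=\Span N_{T_D(\tilde g(\yb))}(\nabla\tilde g(\yb)v).\]
Fix the polyhedral representation \eqref{EqReprPolyCone} of $T_D(\tilde g(\yb))$ with vectors $a_i$, $i\in\bar I$. By Corollary \ref{CorUnionFaces} (applied with $\lambda=0$, $\eta=0$), combined with the identification carried out in the proof of Theorem \ref{ThDirNonDegen}, one has $L=\Span\{a_i\mv i\in\J(v)\}$, where $\J(v)=\{i\in\bar I\mv a_i^T\nabla\tilde g(\yb)v=0\}$. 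For $y$ sufficiently close to $\yb$ the active index set at $\tilde g(y)$ is contained in $\bar I$, and I will use the directional expansion $(\tilde g(y_k)-\tilde g(\yb))/t_k\to\nabla\tilde g(\yb)v$ to argue that any index $i\in\bar I$ with $a_i^T\nabla\tilde g(\yb)v<0$ becomes inactive at $\tilde g(y_k)$ for large $k$. This confines the active indices at $\tilde g(y_k)$ to $\J(v)$ and gives $\Span N_D(\tilde g(y_k))\subset L$.

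Once $\mu_k\in L$ with $\norm{\mu_k}=1$ is secured, finite-dimensionality lets us pass to a subsequence converging to some $\mu\in L$ with $\norm{\mu}=1$. Continuity of $\nabla\tilde g$ and $\norm{\nabla\tilde g(y_k)^T\mu_k}\to 0$ give $\nabla\tilde g(\yb)^T\mu=0$, contradicting \eqref{EqDirNonDegen}. The ``in particular'' statement follows at once from \eqref{EqUniformNonDegen}: non-degeneracy of $\tilde g(\cdot)\in D$ at $y$ is precisely the implication $\nabla\tilde g(y)^T\mu=0$, $\mu\in\Span N_D(\tilde g(y))\Rightarrow\mu=0$ (note $N_{T_D(\tilde g(y))}(0)=N_D(\tilde g(y))$), and this is immediate once $\norm{\nabla\tilde g(y)^T\mu}\ge\beta\norm{\mu}$ is available. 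The main obstacle I anticipate is the second step, i.e., tracking how the directional approach constrains the active-index combinatorics of the polyhedral set $D$ and hence collapses the a priori $y$-dependent spans $\Span N_D(\tilde g(y_k))$ into the single fixed subspace $L$; once this uniform containment is in hand, the compactness-plus-continuity passage to the limit is routine.
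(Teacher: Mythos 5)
Your proof is correct and follows essentially the same contradiction argument as the paper: normalize $\mu_k$, extract a sequence approaching $\yb$ along direction $v$, use polyhedrality of $D$ to force $\Span N_D(\tilde g(y_k))\subset\Span N_{T_D(\tilde g(\yb))}(\nabla\tilde g(\yb)v)$ for large $k$, then pass to a convergent subsequence of $\mu_k$ to contradict \eqref{EqDirNonDegen}. The only stylistic difference is that you establish the key containment via the active-index combinatorics of the representation \eqref{EqReprPolyCone}, whereas the paper gets the (slightly stronger) inclusion $N_D(\tilde g(y_k))\subset N_{T_D(\tilde g(\yb))}(\nabla\tilde g(\yb)v)$ through a short chain of polar/tangent-cone identities; both rest on the same polyhedral geometry, and your minor imprecision about the limit $(\tilde g(y_k)-\tilde g(\yb))/t_k$ in the degenerate case $v=0$ is harmless since $\J(0)=\bar I$ makes the containment trivial there.
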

\begin{proof}By contraposition. Assume on the contrary that there are sequences $t_k\downarrow 0$, $y_k\in \tilde g^{-1}(D)$, $\mu_k\in (\Span N_D(\tilde g(y_k))\cap \Sp_{\R^s}$ such that $\lim_{k\to\infty}\nabla \tilde g(y_k)^T\mu_k=0$ and $\lim_{k\to\infty}(y_k-\yb)/t_k \to v$. Since for all $k$ sufficiently large we have
\begin{eqnarray*}N_D(\tilde g(y_k))&=& (T_D(\tilde g(\yb))^\circ\cap[\tilde g(y_k)-\tilde g(\yb)]^\perp=
\left(T_D(\tilde g(\yb))+\left[\frac{\tilde g(y_k)-\tilde g(y)}{t_k}\right]\right)^\circ\\
&=&\left(T_{T_D(\tilde g(y))}\left(\frac{\tilde g(y_k)-\tilde g(y)}{t_k}\right)\right)^\circ=N_{T_D(\tilde g(y))}\left(\frac{\tilde g(y_k)-\tilde g(y)}{t_k}\right)\\
&=&N_{T_D(\tilde g(y))}(\nabla \tilde g(y)v)\cap \left[\frac{\tilde g(y_k)-\tilde g(y)}{t_k}-\nabla \tilde g(y)v\right]^\perp\subset N_{T_D(\tilde g(y))}(\nabla \tilde g(y)v),\end{eqnarray*}
it holds that $\mu_k\in \Span N_{T_D(\tilde g(y))}(\nabla \tilde g(y)v)$ and, by passing to some subsequence if necessary, we can assume that $\mu_k$ converges to some $\mu\in \Big(\Span N_{T_D(\tilde g(y))}(\nabla \tilde g(y)v)\Big)\cap\Sp_{\R^s}$. Obviously we also have $\nabla g(y)^T\mu=0$, a contradiction to the assumed directional non-degeneracy and \eqref{EqUniformNonDegen} is proved. The additional statement concerning the non-degeneracy is an immediate consequence of \eqref{EqUniformNonDegen}.
\end{proof}
\if{
\begin{proposition}Let $\yb\in \tilde g^{-1}(D)$ and assume that the system $\tilde g(\cdot)-D$ is non-degenerate in every direction $0\not=v\in\R^m$ at $\yb$. Then for every $\lambda\in N_D(\tilde g(\yb)$ the mapping $F$ given by \eqref{EqTildeF} is metrically subregular at $(\yb,\lambda)$.
\end{proposition}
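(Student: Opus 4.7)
The plan is to apply Theorem~\ref{ThSubReg} to the mapping $F$ at the reference point $((\yb,\lambda),(0,0))$ and verify the required directional coderivative implication in every nonzero direction $(u,\zeta)\in\R^m\times\R^s$. Combining the pull-back used in the proof of Lemma~\ref{LemF_DirMetrReg} with the structural description of the directional limiting normal cone $N_{\gph N_D}((\tilde g(\yb),\lambda);(\nabla\tilde g(\yb)u,\zeta))$ from \cite[Theorem 2.12]{GfrOut16}, this implication reduces to showing
\[
\nabla\tilde g(\yb)^T\mu=0,\ \mu\in(\F_1-\F_2)^\circ \ \Rightarrow\ \mu=0
\]
for every pair of faces $\F_2\subset\F_1\subset[\zeta]^\perp$ of the critical cone $\K_D(\tilde g(\yb),\lambda)$ with $\nabla\tilde g(\yb)u\in\F_2$.

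For any direction with $u\neq 0$ this is precisely what Theorem~\ref{ThDirNonDegen} handles: either $(\lambda,\zeta)\notin\Theta(\yb,u)$, in which case metric regularity in that direction is automatic, or the reduction carried out in the proof of that theorem identifies the displayed implication with the directional non-degeneracy condition \eqref{EqDirNonDegen} in direction $u$, which is assumed to hold. These directions therefore pose no difficulty.

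The main obstacle is the remaining case $u=0$, $\zeta\neq 0$, since applying the same reduction there yields the standard non-degeneracy condition at $\yb$, which is strictly stronger than what is available. To bridge this gap I would exploit the product structure $F(y,\lambda')=(\tilde g(y),\lambda')-\gph N_D$: in any directional neighborhood of $((0,\zeta),(0,0))$ one has $\norm{y-\yb}=o(\norm{\lambda'-\lambda})$, so on the relevant scale $y$ stays arbitrarily close to $\yb$. The strategy is to combine (i) the uniform lower bound $\norm{\nabla\tilde g(y)^T\mu}\geq\beta\norm{\mu}$ on a punctured neighborhood of $\yb$, obtained by applying Proposition~\ref{PropDirNonDegen} to a finite directional cover of the unit sphere $\Sp_{\R^m}$, together with Theorem~\ref{ThSubReg} applied to the simpler mapping $\tilde g(\cdot)-D$ to obtain its metric subregularity at $(\yb,0)$, with (ii) the polyhedrality of $\gph N_D$, which supplies a Hoffman-type linear error bound on each of its finitely many polyhedral pieces. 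Given $(y,\lambda')$ near $(\yb,\lambda)$ one selects a nearest $(z_1,z_2)\in\gph N_D$ to $(\tilde g(y),\lambda')$ and constructs a feasible $(y'',\lambda'')\in F^{-1}(0,0)$ by choosing $y''$ and $\lambda''$ \emph{jointly}. The hard part here is that the normal-cone mapping $N_D$ itself lacks the Aubin property around $(\tilde g(\yb),\lambda)$: the normal cone collapses as one moves into the relative interior of a face of $D$, so a naive ``first project $y$ onto $\tilde g^{-1}(D)$, then project $\lambda'$ onto $N_D(\tilde g(y''))$'' scheme fails; the joint selection must respect the particular polyhedral face of $\gph N_D$ whose closure contains the reference point $(\tilde g(\yb),\lambda)$.
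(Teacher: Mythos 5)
The proposition you are asked to prove is \emph{not} actually proved in the paper: it appears only inside a commented-out \verb|\if{...}\fi| block, and the proof given there is literally ``t.b.d.''. So there is no paper proof to compare against, and your proposal must stand on its own.

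\textbf{Assessment of your proposal.} Your overall framework is the right one. Metric subregularity at $(\yb,\lambda)$ is equivalent to directional metric subregularity in every direction $(u,\zeta)\in\Sp_{\R^m\times\R^s}$ (finite directional cover of the sphere), and the criterion of Theorem~\ref{ThSubReg} in a given direction $(u,\zeta)$ is, by Theorem~\ref{ThDirMetrReg}, nothing but metric \emph{regularity} of $F$ in direction $\big((u,\zeta),(0,0)\big)$. Via Lemma~\ref{LemF_DirMetrReg} and Theorem~\ref{ThDirNonDegen}, for $u\neq 0$ this metric regularity is exactly the directional non-degeneracy hypothesis (or is trivially true when $(\lambda,\zeta)\notin\Theta(\yb,u)$). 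So you are correct that all directions with $u\neq 0$ are taken care of.

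\textbf{The gap.} The case $u=0$, $\zeta\neq 0$ is not resolved. You correctly observe that the criterion of Theorem~\ref{ThSubReg} in direction $(0,\zeta)$ degenerates: the binding face in Lemma~\ref{LemF_DirMetrReg} is the lineality space $\lin\K_D(\tilde g(\yb),\lambda)$, and the resulting requirement $\nabla\tilde g(\yb)^T\mu=0,\ \mu\in\big(\lin\K_D(\tilde g(\yb),\lambda)\big)^\perp\Rightarrow\mu=0$ is at least as strong as standard non-degeneracy at $\yb$, which is precisely what is \emph{not} assumed. So Theorem~\ref{ThSubReg} cannot close this direction, and you must establish directional metric \emph{subregularity} of $F$ in direction $(0,\zeta)$ (a strictly weaker property than metric regularity in direction $\big((0,\zeta),(0,0)\big)$) by an independent argument. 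Your ``bridge'' for this is, however, only a heuristic sketch, and you yourself flag the obstruction (``the hard part here is\ldots''). In particular, the claim that the joint selection should ``respect the particular polyhedral face of $\gph N_D$ whose closure contains $(\tilde g(\yb),\lambda)$'' is not implemented, and it is not clear it can be: the nearest point $(z_1,z_2)\in\gph N_D$ to $(\tilde g(y),\lambda')$ need not have $z_1$ in the affine hull of the minimal face of $D$ containing $\tilde g(\yb)$, and the Hoffman bound you invoke only controls the distance to each polyhedral piece of $\gph N_D$, not the distance from a perturbed $\lambda'$ to the (strictly smaller) normal cone $N_D(\tilde g(y''))$ after $y$ has been projected onto $\tilde g^{-1}(D)$. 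This is exactly the failure mode you identify, and it is not repaired. Until the direction $(0,\zeta)$ is handled by a complete argument, the proof is not done.
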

\begin{proof}
  t.b.d.
\end{proof}}\fi

It turns out that the directional non-degeneracy can be fulfilled
in all non-zero directions even if the (standard) non-degeneracy is violated.
\begin{example}
  Let $D=\R^s_-$ and assume $\tilde g(\yb)=0$. Given a direction $v$ satisfying $\nabla \tilde g(\yb)v\leq 0$, we have $\Span N_{T_D(\tilde g(\yb))}(\nabla \tilde g(\yb)v)=\{\mu\in\R^s\mv\mu_i=0,\ i\not\in \J(v)\}$, where $\J(v):=\{i\mv \nabla \tilde g_i(\yb)v=0\}$. Thus, non-degeneracy in direction $v$ is equivalent to the linear independence of the gradients $\nabla \tilde g_i(\yb)$, $i\in \J(v)$ whereas non-degeneracy amounts to the so-called {\em linear independence constraint qualification} (LICQ), i.e., to the linear independence of all gradients $\nabla \tilde g_i(\yb)$, $i=1,\ldots,s$.

  Consider  the system
  \[y_1-y_4\leq 0,\ -y_1-y_4\leq 0,\ y_2-y_4\leq 0,\ -y_2-y_4\leq 0,\ y_3+y_1^2-y_4\leq 0,\ -y_3-y_4\leq 0.\]
  Obviously LICQ is violated at $\yb=0$. However, it is not difficult to verify that the system is non-degenerate in every direction $v\not=0$.\\
  Further note that in this example also the so-called {\em constant rank constraint qualification} is violated at $\yb$.\hfill$\triangle$
\end{example}

\if{\begin{proposition}\label{PropMSvsMaxMult}
Assume we are given $y\in\R^n$, $\lambda\in\R^s$, $v\in\R^n$ and $\eta\in \R^s$ with $\lambda \in N_D(\tilde g(y))$, $(\nabla \tilde g(y)v,\eta)\in \gph N_{\K_{D}(\tilde g(y),\lambda)}$ and assume that the mapping
\eqref{EqTildeF}
is metrically subregular in direction $(v,\eta)$ at $\big((y,\lambda),(0,0)\big)$.
Then
\[\lambda \in \argmax\{v^T\nabla^2\skalp{\mu^T \tilde g}(\tilde y)v \mv \mu\in N_{D}(\tilde g(y)), \nabla \tilde g(y)^T \mu=y^*\},
\]
where $y^*:=\nabla\tilde g(y)^T\lambda$.
\end{proposition}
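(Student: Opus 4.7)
The plan is to exploit the directional metric subregularity to pull the tangential line $t \mapsto (y + tv,\lambda + t\eta)$, which is only approximately feasible, onto a genuine curve $(y(t),\lambda(t))\in F^{-1}(0,0)$ that stays $O(t^2)$-close; the monotonicity of $N_D$ applied at $\tilde g(y)$ and $\tilde g(y(t))$ will then yield a second-order inequality forcing $\lambda$ to maximize $v^T\nabla^2\langle\mu,\tilde g\rangle(y)v$ over the admissible multipliers $\mu$.

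First I would verify that the tangential trajectory actually lands in $\gph N_D$. Since $(\nabla\tilde g(y)v,\eta)\in\gph N_{\K_D(\tilde g(y),\lambda)}=T_{\gph N_D}(\tilde g(y),\lambda)$ by \eqref{EqTangConeGraphNormalCone} and $\gph N_D$ is polyhedral, the local identification of a polyhedral set with its tangent cone (Section 2.3) gives $(\tilde g(y)+t\nabla\tilde g(y)v,\,\lambda+t\eta)\in\gph N_D$ for all sufficiently small $t>0$. Consequently
\[
\dist{(\tilde g(y+tv),\lambda+t\eta),\,\gph N_D}\leq\|\tilde g(y+tv)-\tilde g(y)-t\nabla\tilde g(y)v\|=O(t^2).
\]
Because $(y,\lambda)+t(v,\eta)$ lies in any directional neighborhood ${\cal V}_{\rho,\delta}(v,\eta)$ for small $t>0$, the assumed directional metric subregularity at $((y,\lambda),(0,0))$ supplies a feasible pair $(y(t),\lambda(t))\in F^{-1}(0,0)$ (i.e.\ $\lambda(t)\in N_D(\tilde g(y(t)))$) with $\|(y+tv,\lambda+t\eta)-(y(t),\lambda(t))\|\leq\kappa\cdot O(t^2)$. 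In particular $y(t)=y+tv+O(t^2)$ and $\lambda(t)=\lambda+t\eta+O(t^2)$.

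Next, the convexity of $D$ renders $N_D$ a monotone set-valued map, so from $\mu\in N_D(\tilde g(y))$ and $\lambda(t)\in N_D(\tilde g(y(t)))$ we obtain
\[
\langle\lambda(t)-\mu,\,\tilde g(y(t))-\tilde g(y)\rangle\geq 0.
\]
A Taylor expansion of $\tilde g$ yields
\[
\tilde g(y(t))-\tilde g(y)=t\nabla\tilde g(y)v+\nabla\tilde g(y)\bigl(y(t)-y-tv\bigr)+\tfrac{t^2}{2}\nabla^2\tilde g(y)[v,v]+o(t^2).
\]
The hypothesis $\nabla\tilde g(y)^T(\lambda-\mu)=0$ annihilates the first and second terms when paired with $\lambda-\mu$; the relation $\eta\in N_{\K_D(\tilde g(y),\lambda)}(\nabla\tilde g(y)v)\subset[\nabla\tilde g(y)v]^\perp$ annihilates the term $(t\eta)^T(t\nabla\tilde g(y)v)$; and the remaining cross products involving $O(t^2)$-residuals are $O(t^3)=o(t^2)$. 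What survives is
\[
\langle\lambda(t)-\mu,\,\tilde g(y(t))-\tilde g(y)\rangle=\tfrac{t^2}{2}(\lambda-\mu)^T\nabla^2\tilde g(y)[v,v]+o(t^2).
\]
Dividing by $t^2/2$ and letting $t\downarrow 0$ gives $(\lambda-\mu)^T\nabla^2\tilde g(y)[v,v]\geq 0$, which is precisely $v^T\nabla^2\langle\lambda,\tilde g\rangle(y)v\geq v^T\nabla^2\langle\mu,\tilde g\rangle(y)v$, establishing the maximization.

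The substantive step is the reduction of $\dist{(\tilde g(y+tv),\lambda+t\eta),\,\gph N_D}$ to $O(t^2)$, where polyhedrality of $\gph N_D$ is essential: without it this distance would only be $o(t)$ in general, and directional metric subregularity would not deliver a feasible curve that is $O(t^2)$-close, which is too weak to extract second-order information. The rest is careful Taylor bookkeeping, whose cancellations of first-order contributions rely on both orthogonality conditions $\nabla\tilde g(y)^T(\lambda-\mu)=0$ and $\eta\perp\nabla\tilde g(y)v$.
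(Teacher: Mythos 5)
Your proof is correct and follows essentially the same route as the paper's: use polyhedrality of $\gph N_D$ to place the linearized curve inside the graph, invoke directional metric subregularity to produce a feasible curve $(y(t),\lambda(t))$, apply monotonicity of $N_D$, and extract the second-order inequality after Taylor expansion. One point of nuance worth flagging: the sharpened $O(t^2)$ estimate is stronger than needed, and your closing remark that an $o(t)$-close feasible curve would be ``too weak to extract second-order information'' overstates matters. The paper only records the distance as $o(t)$ and takes $(y_t,\lambda_t)$ with $\norm{(y_t,\lambda_t)-(y+tv,\lambda+t\eta)}=o(t)$; the argument still closes because the potentially dangerous first-order term $(\lambda-\mu)^T\nabla\tilde g(y)(y(t)-y)$ vanishes \emph{identically} (not merely to leading order) owing to $\nabla\tilde g(y)^T(\lambda-\mu)=0$, and the remaining cross term $t\,\eta^T\nabla\tilde g(y)(y(t)-y)$ is already $o(t^2)$ since $\eta^T\nabla\tilde g(y)v=0$. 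So polyhedrality is used to get the tangential line exactly into $\gph N_D$, but the resulting $o(t)$ estimate (rather than $O(t^2)$) already carries the day.
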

\begin{proof}
Since $(\nabla \tilde g(y)v,\eta)$ belongs to the polyhedral set $\gph N_{\K_{D}(\tilde g(y),\lambda)}=T_{\gph N_{D}}(\tilde g(y),\lambda)$, we have $(\tilde g(y)+t\nabla \tilde g(y)v,\lambda+t\eta)\in \gph N_{D}$ for all $t\geq 0$ sufficiently small and thus
\[\dist{(\tilde g(y + t v), \lambda + t \eta),\gph N_{D}} =\oo(t).\]
By the assumed metric subregularity we obtain for $t\geq 0$ the existence of $y_t$
and $\lambda_t$ such that $(\tilde g(y_t),\lambda_t)\in\gph N_{D}$ and
\[\lim_{t\downarrow 0}\norm{(y_t -(y+ tv), \lambda_t - (\lambda + t\eta))}/t =0,\]
i.e., $v_t := (y_t - \tilde y)/t \to v$ and
$\eta_t := (\lambda_t - \lambda)/t \to \eta$ as $t\downarrow 0$.

Consider arbitrary $\mu \in N_{D}(\tilde g(y))$
with $\nabla \tilde g(y)^T  \mu =y^*=\nabla \tilde g(y)^T \lambda$.
By convexity of $D$, together with
$\lambda+t\eta_t\in)N_{D}(\tilde g(y + t v_t))$, we conclude
$ \mu^T (\tilde g(y + t v_t) - \tilde g(y)) \leq 0$,
$(\lambda + t \eta_t)^T (\tilde g(y)-\tilde g(y + t v_t) ) \leq 0$ and consequently, by taking into account $(\mu-\lambda)^T\nabla g(y)=0$ and $\eta^T\nabla g(y)v=0$,
\begin{eqnarray*}
 0 & \geq & \limsup_{t\downarrow 0}\big(\mu - (\lambda + t \eta_t)\big)^T \big(\tilde g(y + t v_t) - \tilde g( y)\big)/t^2 \\
 &=&\limsup_{t\downarrow 0}\big(\mu - (\lambda + t \eta_t)\big)^T\big(\nabla \tilde g(y)v_t/t +\frac 12 v_t^T\nabla^2\tilde g(y)v_t\big)\\
 &=&\limsup_{t\downarrow 0}\Big(\eta_t^T\nabla \tilde g(y)v_t+\frac 12 v_t^T\nabla^2\big((\mu-\lambda)^T\tilde g\big)(y)v_t\Big)
 =\frac 12 v^T\nabla^2\big((\mu-\lambda)^T\tilde g\big)(y)v,
\end{eqnarray*}
and the assertion follows.
\end{proof}}\fi

\section{Stability properties through generalized differentiation}

Throughout this section we consider the solution mapping $S$ given by  \eqref{EqGeneralSolMap}. Given some reference point $(\pb,\xb)\in \gph S$, we will provide point-based sufficient conditions for the isolated calmness property, the Aubin property and the Aubin property relative to some set $P\subset\R^l$, respectively,  in terms of generalized derivatives of the mapping $M$.

We start with the Levy-Rockafellar characterization of isolated calmness \cite{Lev96}, who showed that
\begin{equation}\label{EqLevy}\mbox{$S$ is isolated calm at $(\pb,\xb)\in\gph S$}\ \Leftrightarrow DS(\pb,\xb)(0)=\{0\}.\end{equation}
\begin{theorem}\label{ThIsolCalmness}
  Assume that $M$ has locally closed graph around the reference point $(\pb,\xb,0)\in\gph M$. If
  \begin{equation}\label{EqSuffCondIsolCalm}
  0\in DM(\pb,\xb,0)(0,u)\ \Rightarrow\ u=0,
  \end{equation}
  then $S$ has the isolated calmness property at $(\pb,\xb)$. Conversely, if there is some $u\not=0$ such that $0\in DM(\pb,\xb,0)(0,u)$ and $M$ is  metrically subregular in direction $(0,u)$ then $S$ is not isolatedly calm at $(\pb,\xb)$.
\end{theorem}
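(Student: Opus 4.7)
Both implications reduce to the Levy--Rockafellar characterization \eqref{EqLevy}, so the task is to analyze $DS(\pb,\xb)(0)$ in terms of $DM$. The bridge between the two is elementary: since $S(p)=\{x\mv 0\in M(p,x)\}$, we have $(p,x)\in\gph S$ iff $(p,x,0)\in\gph M$, so tangent vectors to $\gph S$ at $(\pb,\xb)$ correspond to tangent vectors $((w,z),0)$ to $\gph M$ at $(\pb,\xb,0)$ whose third coordinate vanishes. The question whether $DS(\pb,\xb)(0)=\{0\}$ therefore translates into whether the inclusion $0\in DM(\pb,\xb,0)(0,\cdot)$ admits only the trivial solution.

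For the sufficient condition I would pick an arbitrary $u\in DS(\pb,\xb)(0)$ and unfold the graphical derivative to obtain $t_k\downarrow 0$ and $(v_k,u_k)\to(0,u)$ with $0\in M(\pb+t_k v_k,\xb+t_k u_k)$. Taking $y_k:=0$, the triple $(\pb+t_k v_k,\xb+t_k u_k,t_k y_k)$ belongs to $\gph M$, so $((0,u),0)\in T_{\gph M}(\pb,\xb,0)$, that is, $0\in DM(\pb,\xb,0)(0,u)$. Hypothesis \eqref{EqSuffCondIsolCalm} then forces $u=0$, so $DS(\pb,\xb)(0)=\{0\}$ and the isolated calmness property follows from \eqref{EqLevy}.

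For the converse, assume $u\neq 0$ with $0\in DM(\pb,\xb,0)(0,u)$ and let $\kappa$ together with a directional neighborhood ${\cal U}$ of $(0,u)$ witness the directional metric subregularity of $M$ at $((\pb,\xb),0)$. Unfolding the derivative yields $t_k\downarrow 0$ and $(v_k,u_k,y_k)\to(0,u,0)$ with $t_k y_k\in M(\pb+t_k v_k,\xb+t_k u_k)$, whence $\dist{0,M(\pb+t_k v_k,\xb+t_k u_k)}\leq t_k\norm{y_k}$. After verifying that the perturbations $(t_k v_k,t_k u_k)$ eventually lie in ${\cal U}$, the subregularity inequality produces feasible pairs $(p_k',x_k')\in M^{-1}(0)=\gph S$ within distance $2\kappa t_k\norm{y_k}$ of $(\pb+t_k v_k,\xb+t_k u_k)$. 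Writing $(p_k',x_k')=(\pb+t_k v_k',\xb+t_k u_k')$ and dividing the displacement by $t_k$ gives $(v_k',u_k')\to(0,u)$ with $x_k'\in S(p_k')$, so $u\in DS(\pb,\xb)(0)\setminus\{0\}$ and, via \eqref{EqLevy}, $S$ fails the isolated calmness property.

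The principal technical subtlety is the directional neighborhood check used in the converse: showing $(t_k v_k,t_k u_k)\in{\cal V}_{\rho,\delta}(0,u)$ for $k$ large reduces, after factoring out $t_k$, to the limit $\norm{u}(v_k,u_k)-\norm{(v_k,u_k)}(0,u)\to 0$, which holds because $(v_k,u_k)\to(0,u)$ and $u\neq 0$. Everything else is straightforward bookkeeping with the definitions of graphical derivative, tangent cone, and inverse mapping.
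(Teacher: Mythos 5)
Your proof is correct and follows essentially the same route as the paper: both directions reduce to the Levy--Rockafellar characterization \eqref{EqLevy}, the sufficiency comes from the trivial inclusion $DS(\pb,\xb)(0)\subset\{u\mv 0\in DM(\pb,\xb,0)(0,u)\}$, and the converse shows $(0,u)\in T_{\gph S}(\pb,\xb)$ via directional metric subregularity. The only difference is cosmetic: the paper simply invokes \cite[Proposition 4.1]{GfrOut16} for the tangent-cone membership in the converse step, whereas you re-derive that proposition inline (including the directional-neighborhood verification), which adds transparency but no new idea.
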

\begin{proof}Note that the closedness of $\gph M$ readily implies that $\gph S=M^{-1}(0)$ is locally closed around $(\pb,\xb)$. The sufficiency of \eqref{EqSuffCondIsolCalm} for the isolated calmness property of $S$ is due to \eqref{EqLevy} together with the inclusion
\begin{equation*}
DS(\pb,\xb)(0)\subset \{u\mv 0\in DM(\pb,\xb,0)(0,u)\}\end{equation*}
following  from the definition of the graphical derivative, see also \cite[Theorem 3.1]{Lev96}. In order to show the second statement, consider $u\not=0$ verifying $0\in DM(\pb,\xb,0)(0,u)$ and assume that $M$ is  metrically subregular in direction $(0,u)$ at $(\pb,\xb,0)$. By \cite[Proposition 4.1]{GfrOut16} we obtain
$(0,u)\in T_{M^{-1}(0)}(\pb,\xb)=T_{\gph S}(\pb,\xb)$ and consequently $u\in DS(\pb,\xb)(0)$. Thus mapping $S$ is not isolatedly calm at $(\pb,\xb)$ by \eqref{EqLevy}.
\end{proof}
Since metric subregularity of $M$ implies metric subregularity in any direction, we obtain the following corollary.
\begin{corollary}\label{CorIsolCalm1}
  Assume that $M$  has locally closed graph around and is metrically subregular at $(\pb,\xb,0)\in\gph M$. Then $S$ is isolatedly calm at $(\pb,\xb)$ if and only if \eqref{EqSuffCondIsolCalm} holds.
\end{corollary}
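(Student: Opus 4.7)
The plan is to observe that the sufficiency part is already contained in Theorem \ref{ThIsolCalmness}, so the only new content is the necessity direction. I would derive this necessity from the partial converse in that same theorem after noting that plain metric subregularity of $M$ automatically upgrades to directional metric subregularity of $M$ in every direction of the form $(0,u)$.

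For that upgrade, I would first record the following elementary observation. Since $M$ is metrically subregular at $(\pb,\xb,0)$, there exist a neighborhood $U$ of $(\pb,\xb)$ and a constant $\kappa\geq 0$ such that \eqref{EqSubReg} holds throughout $U$. For any $\delta>0$, any direction $(0,u)\in\R^l\times\R^n$, and any $\rho>0$, the directional neighborhood ${\cal V}_{\rho,\delta}(0,u)$ is by definition contained in $\rho\B_{\R^l\times\R^n}$, so choosing $\rho$ small enough ensures $(\pb,\xb)+{\cal V}_{\rho,\delta}(0,u)\subset U$. Hence \eqref{EqSubReg} remains valid on $(\pb,\xb)+{\cal V}_{\rho,\delta}(0,u)$, which is precisely the definition (Definition \ref{DefDirMetrReg}(1)) of $M$ being metrically subregular in direction $(0,u)$ at $(\pb,\xb,0)$.

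Next I would argue the necessity by contraposition. Suppose that \eqref{EqSuffCondIsolCalm} fails, so that some $u\neq 0$ satisfies $0\in DM(\pb,\xb,0)(0,u)$. By the preceding observation $M$ is metrically subregular in direction $(0,u)$ at $(\pb,\xb,0)$, and the second assertion of Theorem \ref{ThIsolCalmness} then yields that $S$ fails to be isolatedly calm at $(\pb,\xb)$. Combined with the sufficiency already supplied by the first part of that theorem, this establishes the equivalence claimed by the corollary.

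The only step worth double-checking is the passage from ordinary to directional metric subregularity; but since every directional neighborhood of a given direction is by its very construction contained in a ball about the origin, this reduces to a trivial inclusion of sets and presents no genuine obstacle. In particular, no directional assumption beyond the already imposed standard metric subregularity is needed in order to invoke the converse half of Theorem \ref{ThIsolCalmness}.
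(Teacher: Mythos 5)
Your proof is correct and takes essentially the same route as the paper, which justifies the corollary with the one-line remark that metric subregularity implies metric subregularity in every direction and then invokes both halves of Theorem \ref{ThIsolCalmness}. The only difference is that you spell out, from the definition of ${\cal V}_{\rho,\delta}$, why ordinary metric subregularity yields the directional version, a detail the paper leaves implicit.
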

A sufficient condition for the Aubin property of $S$ around $(\pb,\xb)$ is constituted by the following theorem.

\begin{theorem}[{\cite[Theorem 4.4]{GfrOut16}}]\label{ThAubin}
 Assume that $M$ has locally closed graph around the reference point $(\pb,\xb,0)\in\gph M$ and assume that
 \begin{enumerate}
  \item [(i)]
  \[  \{u \in \R^n \mv 0 \in DM(\pb,\xb,0)(q,u)\} \neq \emptyset \mbox{ for all } q \in \R^l;
  \]
   \item [(ii)]  $M$ is metrically subregular at $(\pb,\xb,0)$;
 \item [(iii)]
 For every nonzero $(q,u)\in \R^l\times\R^n$ verifying $0 \in  DM(\pb,\xb,0)(q,u)$ one has the implication
 \begin{equation*}
(q^*,0)\in D^*M ((\pb,\xb,0); (q,u,0))(v^*)\Rightarrow q^*=0.
 \end{equation*}
 \end{enumerate}
 Then $S$ has the Aubin property around $(\pb,\xb)$ and for any $q \in \R^l$
 \begin{equation*}
DS(\pb,\xb)(q)=\{u\mv 0\in DM(\pb,\xb,0)(q,u)\}.
 \end{equation*}
 The above assertions remain true provided assumptions (ii), (iii) are replaced by
 \begin{enumerate}
\item [(iv)] For every nonzero $(q,u)\in\R^l\times\R^n$ verifying $0 \in DM(\pb,\xb,0)(q,u)$ one has the implication
\begin{equation*}
(q^*,0)\in D^*M((\pb,\xb,0); (q,u,0))(v^*)\Rightarrow \left \{ \begin{array}{l} q^*=0\\ v^*=0. \end{array}\right.
 \end{equation*}
 \end{enumerate}
\end{theorem}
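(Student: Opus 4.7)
The plan is to prove the Aubin property of $S$ via the dual Mordukhovich criterion, while simultaneously deriving the graphical derivative formula. First, under assumption (ii), I would establish the identity $DS(\pb,\xb)(q)=\{u\mv 0\in DM(\pb,\xb,0)(q,u)\}$. The inclusion ``$\subseteq$'' is immediate from $\gph S=\{(p,x)\mv (p,x,0)\in\gph M\}$ and the definition of the graphical derivative. For ``$\supseteq$'', given $(q,u,0)\in T_{\gph M}(\pb,\xb,0)$, pick sequences $t_k\downarrow 0$ and $(q_k,u_k,w_k)\to(q,u,0)$ with $(\pb+t_kq_k,\xb+t_ku_k,t_kw_k)\in\gph M$, and apply the metric subregularity of $M$ to project onto $M^{-1}(0)=\gph S$ at distance $\kappa t_k\norm{w_k}=\oo(t_k)$; dividing by $t_k$ realizes $(q,u)\in T_{\gph S}(\pb,\xb)$.

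Second, I would verify the Mordukhovich criterion $D^*S(\pb,\xb)(0)=\{0\}$. Take a limiting normal $(q^*,0)\in N_{\gph S}(\pb,\xb)$ with approximating sequences $(p_k,x_k)\setto{\gph S}(\pb,\xb)$ and regular normals $(q^*_k,u^*_k)\to(q^*,0)$, $(q^*_k,u^*_k)\in \widehat N_{\gph S}(p_k,x_k)$. If infinitely many $(p_k,x_k)$ coincide with $(\pb,\xb)$, then a preimage formula for Fr\'echet normals combined with (ii) yields a multiplier $v^*\in\R^m$ with $(q^*,0,v^*)\in\widehat N_{\gph M}(\pb,\xb,0)$, and the zero-direction versions of (iii)/(iv) force $q^*=0$. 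Otherwise, normalize the displacements and extract a subsequence so that $(p_k-\pb,x_k-\xb)/\norm{(p_k-\pb,x_k-\xb)}$ converges to a unit vector $(q,u)$. By the first step, $(q,u)\in T_{\gph S}(\pb,\xb)$ and $0\in DM(\pb,\xb,0)(q,u)$. A directional version of the preimage rule then supplies $v^*\in\R^m$ with $(q^*,0)\in D^*M((\pb,\xb,0);(q,u,0))(v^*)$, and assumption (iii) delivers $q^*=0$.

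Third, when (ii), (iii) are replaced by (iv), the argument streamlines: (iv) bundles both the desired conclusion $q^*=0$ and the ancillary $v^*=0$ (which (ii) would otherwise secure via subregularity) into one directional implication, so the subregularity step becomes redundant. Assumption (i) is used throughout to guarantee that the graphical-derivative formula realizes every parameter direction, ruling out degenerate limits and ensuring the Aubin constant controls all $q\in\R^l$.

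The main obstacle I anticipate is the preimage/dualization step in the second part: obtaining the multiplier $v^*_k$ from $(q^*_k,u^*_k)\in\widehat N_{\gph S}(p_k,x_k)$ requires a qualification condition at each $(p_k,x_k)$, not just at $(\pb,\xb)$. The resolution is to invoke directional metric subregularity of $M$ along the extracted direction $(q,u,0)$, obtained by combining (ii) with (iii) via a Theorem \ref{ThSubReg}-style reasoning, to produce $v^*_k$ with controlled norm, so that a subsequence converges to the desired $v^*$ in the directional limiting normal cone $N_{\gph M}((\pb,\xb,0);(q,u,0))$. This is the delicate technical kernel of the original argument in \cite{GfrOut16}.
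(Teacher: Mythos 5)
The paper does not prove this theorem; it is imported verbatim from \cite[Theorem 4.4]{GfrOut16}. (The genuinely new relative-Aubin version, Theorem \ref{ThRelAubin}, is proved here via Robinson stability and \cite[Corollary 3.6]{GfrMo17a}, a different route from yours.) So I can only assess your reconstruction on its own merits, and there is one genuine gap.

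Your Step 1 (the graphical-derivative formula via metric subregularity of $M$) and the set-up of Step 2 (Mordukhovich criterion $D^*S(\pb,\xb)(0)=\{0\}$, split according to whether the approximating sequence eventually leaves $(\pb,\xb)$) are sound, and so is the observation that under (iv) alone the metric subregularity needed in Step 1 is recovered via Theorem \ref{ThSubReg}. The problem is your treatment of the stationary case: you invoke ``zero-direction versions of (iii)/(iv)'', but no such version exists — both hypotheses are stated exclusively for \emph{nonzero} $(q,u)$, and the directional limiting normal cone at direction $(0,0,0)$ is the full cone $N_{\gph M}(\pb,\xb,0)$, about which (iii) and (iv) say nothing. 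Even granting the preimage step, you would be left with $(q^*,0,v^*)\in\widehat N_{\gph M}(\pb,\xb,0)$ and no hypothesis to exploit. The fix is simpler and bypasses the preimage formula entirely: if the sequence is stationary, then $(q^*,0)\in\widehat N_{\gph S}(\pb,\xb)=T_{\gph S}(\pb,\xb)^\circ$; by Step 1 and assumption (i), for every $q\in\R^l$ there exists $u$ with $(q,u)\in T_{\gph S}(\pb,\xb)$, whence $\langle q^*,q\rangle\le 0$ for all $q$ and thus $q^*=0$. Assumptions (iii)/(iv) are needed only in the non-stationary case, where the direction is genuinely nonzero. The preimage/dualization step in that non-stationary case you flag correctly as the technical heart of the argument, but it is left as a description rather than a proof.
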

Sufficient conditions for the Aubin property of $S$ relative to some set $P$ are based on the following statement, where $h:P\times \R^n\to\R^l\times\R^n\times\R^m,\ h(p,x):=(p,x,0)$.
\begin{proposition}\label{PropRelAubin}
Let $(\pb,\xb,0)\in\gph M$ and consider a subset $P\subset\R^l$ containing $\pb$. If the system
\begin{equation}\label{EqSystem4RS}h(p,x)\in \gph M \end{equation}
 enjoys the Robinson stability property at $(\pb,\xb)$, where $P$ is equipped with the induced norm topology of $\R^l$, then $S$ has the Aubin property relative to $P$ around $(\pb,\xb)$.
\end{proposition}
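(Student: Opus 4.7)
The plan is to exploit the fact that the assumed Robinson stability of \eqref{EqSystem4RS} is essentially already an Aubin-type estimate, but written in terms of the residual $\dist{h(p,x),\gph M}$ instead of a distance between $p$ and a second parameter $p'$. The only work is to convert that residual into $\|p-p'\|$ by using the feasibility of $(p,x)$.

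Concretely, I would first unpack the Robinson stability hypothesis: it furnishes a neighborhood $Q$ of $\pb$ in the $P$-relative norm topology, a neighborhood $U$ of $\xb$ in $\R^n$, and a constant $\kappa\geq 0$ with
\[
\dist{x,S(p)}\leq \kappa\,\dist{h(p,x),\gph M}=\kappa\,\dist{(p,x,0),\gph M}\quad\forall (p,x)\in Q\times U.
\]
Because the topology on $P$ is induced from $\R^l$, there exists an open neighborhood $V$ of $\pb$ in $\R^l$ with $Q=V\cap P$. This $V$ together with $U$ will play the role of the neighborhoods in the definition of the relative Aubin property.

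Next, I would fix arbitrary $p,p'\in V\cap P$ and $x\in S(p)\cap U$ and apply the Robinson estimate at the pair $(p',x)\in Q\times U$. The key (and essentially trivial) observation is that $x\in S(p)$ means $(p,x,0)\in\gph M$, hence
\[
\dist{(p',x,0),\gph M}\leq \bnorm{(p',x,0)-(p,x,0)}=\|p-p'\|.
\]
Chaining the two inequalities gives $\dist{x,S(p')}\leq \kappa\|p-p'\|$. Since the standing local closedness of $\gph M$ around $(\pb,\xb,0)$ forces the preimage $S(p')=\{x\mv (p',x,0)\in\gph M\}$ to be closed for $p'$ sufficiently close to $\pb$, this distance bound upgrades to the inclusion $x\in S(p')+\kappa\|p-p'\|\B_{\R^n}$. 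With $L:=\kappa$ this is exactly the Aubin property of $S$ relative to $P$ around $(\pb,\xb)$.

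I do not anticipate a real obstacle: the argument is a direct translation between two equivalent Lipschitz-type formulations, and the only point requiring attention is the minor bookkeeping of restricting neighborhoods in $\R^l$ to $P$ to match the $P$-relative topology in which the Robinson stability is stated.
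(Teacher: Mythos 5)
Your approach matches the paper's almost line for line: unpack Robinson stability, observe $\dist{(p',x,0),\gph M}\leq\norm{p-p'}$ because $(p,x,0)\in\gph M$, and chain the two estimates. The one place where you deviate is the final passage from the distance bound $\dist{x,S(p')}\leq\kappa\norm{p-p'}$ to the set inclusion. You justify $x\in S(p')+\kappa\norm{p-p'}\B_{\R^n}$ by appealing to a ``standing local closedness of $\gph M$'' and deducing closedness of $S(p')$. This step is not quite right: Proposition~\ref{PropRelAubin} does not carry a closedness hypothesis on $\gph M$ (unlike Theorems~\ref{ThIsolCalmness} and~\ref{ThRelAubin}), and even local closedness of $\gph M$ near $(\pb,\xb,0)$ would only give closedness of $S(p')$ locally near $\xb$, not globally, so the infimum in $\dist{x,S(p')}$ need not be attained. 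The paper avoids the issue entirely by enlarging the Lipschitz constant: since $\dist{x,S(p')}\leq\kappa\norm{p-p'}$, one certainly has $x\in S(p')+(\kappa+1)\norm{p-p'}\B_{\R^n}$, and $L:=\kappa+1$ suffices for the Aubin property. With this trivial adjustment (or with $L:=\kappa+\varepsilon$ for any $\varepsilon>0$), your argument is correct and coincides with the paper's.
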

\begin{proof}
Obviously $S$ is also the solution mapping of the inclusion $(p,x,0)\in\gph M$. By the definition of the Robinson stability together with the assumption on the topology of $P$, there are neighborhoods $Q$ of $\pb$ in $\R^l$, $U$ of $\xb$ and a constant $\kappa\geq 0$ such that
\[\dist{x,S(p)}\leq\kappa\dist{(p,x,0),\gph M}\ \forall (p,x)\in (Q\cap P)\times U.\]
Next consider $p,p'\in Q\cap P$ and $x\in S(p)\cap U$. Then
\[\dist{x,S(p')}\leq \kappa\dist{(p',x,0),\gph M}\leq \kappa\big(\dist{(p,x,0),\gph M}+\norm{p-p'}\big)=\kappa\norm{p-p'}\]
 and thus $x\in S(p')+(\kappa+1)\norm{p-p'}\B_{\R^n}$. It follows that $S(p)\cap U\subset S(p')+(\kappa+1)\norm{p-p'}\B_{\R^n}$ showing the Aubin property of $S$ relative to $P$.
\end{proof}
\begin{theorem}\label{ThRelAubin}
 Assume that $M$ has a locally closed graph around the reference point $(\pb,\xb,0)\in\gph M$ and consider a closed set $P\subset\R^l$ containing $\pb$. Further assume that
 \begin{enumerate}
 \item[(i)] for every $q\in T_P(\pb)$ and every sequence $t_k\downarrow 0$ there exists some $u\in \R^n$ satisfying
 \begin{equation}\label{EqRelAubin_Cond1}\liminf_{k\to\infty}\;\dist{(\pb+t_kq,\xb +t_k u,0),\gph M}/t_k=0\end{equation}
 \item[(ii)] For every nonzero $(q,u)\in T_P(\pb)\times\R^n$ verifying $0 \in DM(\pb,\xb,0)(q,u)$ one has the implication
\begin{equation}\label{EqRelAubin_Cond2}
(q^*,0)\in D^*M((\pb,\xb,0); (q,u,0))(v^*)\Rightarrow \left \{ \begin{array}{l} q^*=0\\ v^*=0. \end{array}\right.
 \end{equation}
  \end{enumerate}
  Then $S$ has the Aubin property relative to $P$ around $(\pb,\xb)$ and for any $q \in T_P(\pb)$
 \begin{equation}\label{EqGraphDerS}
DS(\pb,\xb)(q)=\{ u  \mv 0\in DM(\pb,\xb,0)(q,u)\}.
 \end{equation}
\end{theorem}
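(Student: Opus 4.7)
The plan is to verify the Robinson stability property of the system \eqref{EqSystem4RS} at $(\pb,\xb)$, with $P$ endowed with the induced norm topology of $\R^l$, and then invoke Proposition \ref{PropRelAubin} to conclude the Aubin property of $S$ relative to $P$. The graphical derivative formula \eqref{EqGraphDerS} will then be a separate consequence, derived from Robinson stability together with the defining characterization of $T_P(\pb)$ and the directional approximation property in assumption (i).

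For Robinson stability I argue by contradiction, paralleling the proof of Theorem \ref{ThAubin} from \cite{GfrOut16} but with $P$-restricted sequences. If it fails, there exist $p_k \to \pb$ in $P$ and $x_k \to \xb$ with
\[\alpha_k := \dist{(p_k, x_k, 0), \gph M} < \frac{1}{k}\dist{x_k, S(p_k)}.\]
Taking near-projections $(p_k', x_k', w_k) \in \gph M$ of $(p_k, x_k, 0)$ realizing $\alpha_k$ up to $\oo(\alpha_k)$ and normalizing by $\tau_k := \|(p_k - \pb, x_k - \xb, \alpha_k)\|$, I pass to a subsequence along which $(p_k-\pb, x_k-\xb, \alpha_k)/\tau_k$ converges to a unit vector $(q, u, \gamma)$; closedness of $P$ combined with $p_k\in P$ forces $q \in T_P(\pb)$. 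Assumption (i) applied to $q$ and the sequence $\tau_k$ supplies directional approximate solutions to \eqref{EqGeneralSolMap}, from which one bounds $\dist{x_k, S(p_k)}$ by $\OO(\tau_k)$; combined with $k\alpha_k < \dist{x_k, S(p_k)}$ this forces $\gamma=0$ and consequently $(q,u)\neq 0$. The near-projections then satisfy $(p_k'-\pb, x_k'-\xb, w_k)/\tau_k \to (q, u, 0)$, so that $0 \in DM(\pb,\xb,0)(q,u)$.

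The main technical obstacle is to exhibit nontrivial dual multipliers contradicting assumption (ii). The device is an application of Ekeland's variational principle to the distance function $y \mapsto \dist{(p_k, y, 0), \gph M}$ over a ball around $x_k$ of radius comparable to $\dist{x_k, S(p_k)}$, combined with a fuzzy sum rule for regular normal cones. This yields regular normals $(q_k^*, u_k^*, -v_k^*)\in\widehat N_{\gph M}(\tilde p_k, \tilde x_k, \tilde m_k)$ at points $(\tilde p_k, \tilde x_k, \tilde m_k)$ converging to $(\pb, \xb, 0)$ along the direction $(q, u, 0)$. The Robinson violation ratio $\alpha_k / \dist{x_k, S(p_k)} < 1/k$ drives the normalized $x$-component $u_k^*$ to zero, while the minimization structure keeps $q_k^*$ bounded away from zero. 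Passing to the limit using the directional nature of the convergence delivers $(q^*, 0) \in D^*M\big((\pb,\xb,0);(q,u,0)\big)(v^*)$ with $q^* \neq 0$, contradicting (ii). Robinson stability thereby holds, and Proposition \ref{PropRelAubin} gives the Aubin property of $S$ relative to $P$.

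For \eqref{EqGraphDerS}, the inclusion $DS(\pb,\xb)(q) \subset \{u \mv 0\in DM(\pb,\xb,0)(q,u)\}$ is immediate from the definitions, since any witnessing sequence for $u\in DS(\pb,\xb)(q)$ lies in $\gph S$ and lifts to a witnessing sequence in $\gph M$ with third coordinate zero. For the reverse inclusion fix $q \in T_P(\pb)$ and $u$ with $0 \in DM(\pb,\xb,0)(q,u)$, and pick $t_k \downarrow 0$ together with $(p_k, x_k, w_k) \in \gph M$ such that $(p_k-\pb)/t_k \to q$, $(x_k-\xb)/t_k \to u$ and $w_k/t_k \to 0$. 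Since $q \in T_P(\pb)$, also select $p_k' \in P$ with $(p_k' - \pb)/t_k \to q$; then $\dist{(p_k', x_k, 0), \gph M} \leq \|p_k' - p_k\| + \|w_k\| = \oo(t_k)$. Robinson stability now yields $x_k' \in S(p_k')$ with $\|x_k' - x_k\| = \oo(t_k)$, whence $(x_k' - \xb)/t_k \to u$ and $u \in DS(\pb,\xb)(q)$.
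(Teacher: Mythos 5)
Your high-level plan — establish the Robinson stability of the system \eqref{EqSystem4RS}, invoke Proposition~\ref{PropRelAubin}, then prove the derivative formula separately — is the same as the paper's, but the paper does not reprove Robinson stability; it invokes \cite[Corollary 3.6]{GfrMo17a} as a black box and verifies that its image-derivative and coderivative hypotheses reduce exactly to (i) and (ii). Your attempt to reconstruct that Robinson stability proof from first principles has genuine gaps.

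First, the step ``Assumption (i) applied to $q$ and the sequence $\tau_k$ supplies directional approximate solutions to \eqref{EqGeneralSolMap}, from which one bounds $\dist{x_k,S(p_k)}$ by $\OO(\tau_k)$'' is not justified. Assumption (i) only controls the distance of points of the very specific form $(\pb+t_kq,\xb+t_ku,0)$ to $\gph M$; it says nothing about the distance from an arbitrary $x_k$ to $S(p_k)$, nor does it guarantee that $S(p_k)\not=\emptyset$. A bound $\dist{x_k,S(p_k)}=\OO(\tau_k)$ is essentially already a Robinson-type estimate and cannot be extracted from (i) alone — in the cited corollary, assumptions of type (i) and (ii) must be used jointly inside the Ekeland argument. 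Consequently your deduction that $\gamma=0$ (hence $(q,u)\not=0$) does not follow. Second, the Ekeland/fuzzy-sum step is too vague to assess: applying Ekeland to $y\mapsto\dist{(p_k,y,0),\gph M}$ perturbs only in the $x$-slot and does not by itself produce a nonzero first component $q_k^*$; you assert that ``the minimization structure keeps $q_k^*$ bounded away from zero'' without explanation, which is exactly the nontrivial point. (The sign conventions for $(q_k^*,u_k^*,-v_k^*)$ are also glossed over.)

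Third, in the reverse inclusion of \eqref{EqGraphDerS} you pick $t_k\downarrow 0$ from a tangent sequence to $\gph M$ and then ``select $p_k'\in P$ with $(p_k'-\pb)/t_k\to q$.'' This requires that the tangent $q$ to the closed set $P$ at $\pb$ be approximable along the \emph{given} sequence $t_k$, i.e.\ that $q$ be a derivable tangent to $P$; for a general closed set this can fail. The paper avoids the issue altogether: condition \eqref{EqRelAubin_Cond2} yields, via Theorem~\ref{ThDirMetrReg}, metric subregularity of $M$ in direction $(q,u)$, and then \cite[Proposition~4.1]{GfrOut16} gives $(q,u)\in T_{M^{-1}(0)}(\pb,\xb)=T_{\gph S}(\pb,\xb)$ directly, without needing any compatibility between the approximation sequences for $P$ and for $\gph M$. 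You should replace your Robinson-stability-based reverse inclusion by this directional-subregularity argument.
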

\begin{proof}First, we apply \cite[Corollary 3.6]{GfrMo17a} to show the Robinson stability property of the system \eqref{EqSystem4RS}
at $(\pb,\xb)$. By taking $\zeta(p)=\norm{p-\pb}$ we obtain that the image derivative ${\rm Im}_\zeta D_ph(\pb,\xb)$ defined in \cite{GfrMo17a} as the closed cone  generated by $0$ and those $v\in \R^l\times\R^n\times\R^m$  for which there is a sequence ${p_k}\subset P$ with
\begin{gather*}0<\norm{h(p_k,\xb)-h(\pb,\xb)}<k^{-1},\ \norm{\nabla_x h(p_k,\xb)-\nabla_x h(\pb,\xb)}<k^{-1},\ \vert\zeta(p_k)-\zeta(\pb)\vert<k^{-1},\\
v=\lim_{k\to\infty} \frac{h(p_k,\xb)-h(\pb,\xb)}{\norm{h(p_k,\xb)-h(\pb,\xb)}}=\lim_{k\to\infty}\frac{(p_k-\pb,0,0)}{\norm{p_k-\pb}},\end{gather*}
is exactly the set $\{(q,0,0)\mv q\in T_P(\pb)\}$. Further for every $u\in\R^n$ we have $\nabla_x h(\pb,\xb)u=(0,u,0)$ and thus  \cite[Condition 3.10]{GfrMo17a} is fulfilled by \eqref{EqRelAubin_Cond1}. Next we have to verify that for every pair $(0,0)\not=(q,u)\in T_P(\pb)\times\R^n$ satisfying $(q,u,0)\in T_{\gph M}(\pb,\xb,0)$ the implication
\[\lambda\in N_{\gph M}\big((\pb,\xb,0),(q,u,0)\big),\ \nabla_x h(\pb,\xb)^T\lambda=0\ \Rightarrow\  \lambda=0\]
is fulfilled.
Setting $\lambda:=(q^*,u^*,-v^*)$ this amounts to
\[(q^*,u^*)\in D^*M((\pb,\xb,0); (q,u,0))(v^*),\ u^*=0\ \Rightarrow\ (q^*, u^*, -v^*)=(0,0,0),\]
which is obviously equivalent to \eqref{EqRelAubin_Cond2}. By taking into account that the condition $(q,u,0)\in T_{\gph M}(\pb,\xb,0)$ is the same as requiring $0\in DM(\pb,\xb,0)(q,u)$, all assumption of \cite[Corollary 3.6]{GfrMo17a} are fulfilled and the claimed Robinson stability property of the system \eqref{EqSystem4RS} at $(\pb,\xb)$ follows. By virtue of Proposition \ref{PropRelAubin} this implies the Aubin property of $S$ relative to $P$ around $(\pb,\xb)$. There remains to show \eqref{EqGraphDerS}. Since $\{ u  \mv 0\in DM(\pb,\xb,0)(q,u)\}\supset DS(\pb,\xb)(q)$ always holds by \cite[Theorem 3.1]{Lev96}, we only have to show $\{ u  \mv 0\in DM(\pb,\xb,0)(q,u)\}\subset DS(\pb,\xb)(q)$. Consider $u$ satisfying  $0\in DM(\pb,\xb,0)(q,u)$ for some $q\in T_P(\pb)$. By Theorem \ref{ThDirMetrReg}, condition \eqref{EqRelAubin_Cond2} implies that $M$ is metrically subregular in direction $(q,u)$ at $(\pb,\xb,0)$ and hence we can invoke \cite[Proposition 4.1]{GfrOut16} to obtain $(q,u)\in T_{M^{-1}(0)}(\pb,\xb)=T_{\gph S}(\pb,\xb)$ and consequently $u\in DS(\pb,\xb)(q)$. Thus $\{ u  \mv 0\in DM(\pb,\xb,0)(q,u)\}\subset DS(\pb,\xb)(q)$ and the proof of the theorem is complete.
\end{proof}
\begin{remark}
  \label{RemDerivTangents} Assumption (i) of Theorem \ref{ThRelAubin} is fulfilled in particular if for every $q\in T_P(\pb)$ there is some $u\in\R^n$ satisfying $0\in DM(\pb,\xb,0)(q,u)$ and the tangent $(q,u,0)$ to $\gph M$ is derivable. We see that in this case  Theorem \ref{ThRelAubin} is a generalization of  Theorem \ref{ThAubin}.
\end{remark}

\section{Graphical derivative of the normal cone mapping}

This section deals with computation of the graphical derivative of $M$
given by \eqref{EqVarSystem}. Throughout the rest of the paper we assume that we are given a reference solution $(\pb,\xb)$ of \eqref{EqVarSystem} fulfilling the following assumption.
\begin{assumption}\label{AssA1}
  There is some $\kappa>0$ such that for all $(p,x,z)$ belonging to a neighborhood of $(\pb,\xb,\xb)$ the inequality
  \[\dist{z,\Gamma(p,x)} \leq \kappa \dist{g(p,x,z),D}\]
  holds.
\end{assumption}
Note that by Theorem \ref{ThRS} Assumption \ref{AssA1} is  fulfilled, e.g., in the case when
\begin{equation}\label{EqDualRCQ}\nabla_3 g(\pb,\xb,\xb)^T\mu=0,\ \mu\in N_D(g(\pb,\xb,\xb))\ \Rightarrow\ \mu=0\end{equation}
which is equivalent to {\em Robinson's constraint qualification}
\[\nabla_3g(\pb,\xb,\xb)\R^n+T_D(g(\pb,\xb,\xb))=\R^s.\]
As a consequence of Assumption \ref{AssA1} we obtain that for all $(p,x,z)\in\gph\Gamma$ sufficiently close to $(\pb,\xb,\xb)$ the mapping $g(p,x,\cdot)-D$ is metrically subregular at $(z,0)$ with modulus $\kappa$ and therefore
\[\widehat N_{\Gamma(p,x)}(z)=\nabla_3 g(p,x,z)^TN_D(g(p,x,z)).\]
Moreover, for every $z^*\in\widehat N_{\Gamma(p,x)}(z)$ there is a multiplier $\lambda\in N_D(g(p,x,z))$ with
\[z^\ast=\nabla_3 g(p,x,z)^T\lambda,\ \norm{\lambda}\leq \kappa\norm{z^*},\]
cf. \cite[Lemma 2.1]{GfrMo17a}.
Finally, since $\gph \Gamma=\{(p,x,z)\mv g(p,x,z)\in D\}$ and $\dist{(p,x,z),\gph\Gamma}\leq \dist{z,\Gamma(p,x)}$, we conclude that the mapping $g(\cdot)-D$ is metrically subregular at $\big((p,x,z),0\big)$ for every $(p,x,z)\in\gph \Gamma$ sufficiently close to $(\pb,\xb,\xb)$. Therefore
\begin{gather*}T_{\gph \Gamma}(p,x,z)=\big\{(q,u,w)\in\R^l\times\R^n\times\R^n\mv \nabla g(p,x,z)(q,u,w)\in T_D\big(g(p,x,z)\big)\big\},\\
 \widehat N_{\gph \Gamma}(p,x,z)=\nabla g(p,x,z)^TN_D\big(g(p,x,z)\big).
 \end{gather*}
In order to unburden the notation  we introduce  the mappings
\[b(p,x):=\nabla_3g(p,x,x),\quad \tilde g(p,x):=g(p,x,x)\] and  denote the set-valued part of $M(p,x)$
as $G(p,x):=\widehat N_{\Gamma(p,x)}(x)$. For $(p,x)$ close to $(\pb,\xb)$ one has
\[G(p,x)=b(p,x)^T N_D(\tilde g(p,x)).\]

The graphical derivative of $G$ is closely related with the graphical derivative of the mapping $\Psi:\R^l\times\R^n\times\R^n\tto\R^n$ given by
\[\Psi(p,x,z):=\widehat N_{\Gamma(p,x)}(z).\]
In order to give a formula for the graphical derivative of $\psi$ we employ the following notation. Given any $y:=(p,x,z)\in \gph\Gamma$ and any $y^*=(p^*,x^*,z^*)\in \widehat N_{\gph \Gamma}(y)$, we denote by
\[\Lambda(y,y^*):=\{\lambda \in N_D(g(y))\mv \nabla g(y)^T\lambda=y^*\}\]
the corresponding set of multipliers and for any $v=(q,u,w)\in\R^l\times\R^n\times\R^n$ by
\[\Lambda(y,y^*;v):=\argmax\{v^T\nabla^2\skalp{\lambda^Tg}(y)v\mv \lambda\in \Lambda(y,y^*)\}\]
the {\em directional} set of multipliers. Further, for any $y^*=(p^*,x^*,z^*)\in\R^l\times\R^n\times\R^n$ we denote by $\pi_3(y^*)$ the canonical projection of $y^\ast$ on its third component, i.e., $\pi_3(y^*)=z^*$.
\begin{proposition}\label{PropGraphDer}
  Assume that Assumption \ref{AssA1} is fulfilled. Then for all $y:=(p,x,z)\in\gph\Gamma$ sufficiently close to $(\pb,\xb,\xb)$, all $z^*\in \Psi(y)$ and all $v:=(q,u,w)\in\R^l\times\R^n\times\R^n$ we have
  \begin{eqnarray*}
    \lefteqn{D\Psi(y,z^*)(v)}\\
    &=&\{\nabla(\nabla_3g(\cdot)^T\lambda)(y)v+\pi_3(N_{\K_{\gph\Gamma}(y,y^*)}(v))\mv y^*\in N_{T_{\gph\Gamma}(y)}(v), \pi_3(y^*)=z^*, \lambda\in\Lambda(y,y^*;v)\}\\
    &=&\{\nabla(\nabla_3g(\cdot)^T\lambda)(y)v+\nabla_3g(y)^T N_{\K_{D}(g(y),\lambda)}(\nabla g(y)v) \mv \lambda\in\Lambda(y,\nabla g(y)^T\mu;v),\\
     &&\hspace{5cm}\nabla_3 g(y)^T\mu =z^*,\ \mu\in N_D\big(g(y)\big),\ \mu^T\nabla g(y)v=0\}.
  \end{eqnarray*}
\end{proposition}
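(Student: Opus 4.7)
The plan is to compute $T_{\gph\Psi}(y,z^*)$ directly, exploiting the representation $\Psi(y)=\nabla_3 g(y)^T N_D(g(y))$ granted by Assumption \ref{AssA1}, and the polyhedral identity $T_{\gph N_D}(g(y),\lambda)=\gph N_{\K_D(g(y),\lambda)}$ from \eqref{EqTangConeGraphNormalCone}. By the metric-subregularity consequences of Assumption \ref{AssA1}, every $z^*\in\Psi(y)$ admits some $\lambda\in N_D(g(y))$ with $\nabla_3 g(y)^T\lambda=z^*$ and $\norm{\lambda}\le\kappa\norm{z^*}$, so $\gph\Psi$ is locally the image of $\{(y,\lambda)\mv\lambda\in N_D(g(y))\}$ under the smooth map $(y,\lambda)\mapsto(y,\nabla_3 g(y)^T\lambda)$. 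I would prove the second (explicit) equality first, and then derive the first via a representation lemma.

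For the inclusion ``$\subseteq$'', I would take $(v,w^*)\in T_{\gph\Psi}(y,z^*)$ realized by sequences $t_k\downarrow 0$ and $(v_k,w_k^*)\to(v,w^*)$ with $(y+t_kv_k,z^*+t_kw_k^*)\in\gph\Psi$, pick $\lambda_k\in N_D(g(y+t_kv_k))$ satisfying $\nabla_3g(y+t_kv_k)^T\lambda_k=z^*+t_kw_k^*$ and $\norm{\lambda_k}\le\kappa\norm{z^*+t_kw_k^*}$, and extract $\lambda_k\to\lambda\in N_D(g(y))$ along a subsequence. Polyhedrality of $\gph N_D$ then yields, for $k$ large, $(g(y+t_kv_k)-g(y),\lambda_k-\lambda)\in T_{\gph N_D}(g(y),\lambda)=\gph N_{\K_D(g(y),\lambda)}$. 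Dividing the primal part by $t_k$ gives $\nabla g(y)v\in\K_D(g(y),\lambda)$, while a standard polyhedral bound forces $(\lambda_k-\lambda)/t_k$ to stay bounded; a further subsequence produces $\xi:=\lim(\lambda_k-\lambda)/t_k\in N_{\K_D(g(y),\lambda)}(\nabla g(y)v)$. Chain-rule differentiation of $\nabla_3g(y+t_kv_k)^T\lambda_k$ then delivers $w^*=\nabla(\nabla_3g(\cdot)^T\lambda)(y)v+\nabla_3g(y)^T\xi$, and the choice $\mu:=\lambda$ fulfils the remaining constraints, since $\mu^T\nabla g(y)v=0$ follows from $\nabla g(y)v\in\K_D(g(y),\mu)\subset[\mu]^\perp$.

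The reverse inclusion is constructive: given a triple $(\mu,\lambda,\xi)$ as in the formula, $(\nabla g(y)v,\xi)\in\gph N_{\K_D(g(y),\lambda)}$ is a \emph{derivable} tangent to the polyhedral set $\gph N_D$ at $(g(y),\lambda)$, so a curve $(a(t),\lambda(t))\in\gph N_D$ exists with $a(0)=g(y)$, $\lambda(0)=\lambda$, $\dot a(0)=\nabla g(y)v$, $\dot\lambda(0)=\xi$. Using Assumption \ref{AssA1} to match $a(t)$ with $g(y(t))$ for some $y(t)=y+tv+o(t)$, and correcting $\lambda(t)$ by an $o(t)$-term, one obtains $z^*(t):=\nabla_3g(y(t))^T\lambda(t)\in\Psi(y(t))$ with $\dot z^*(0)=\nabla(\nabla_3g(\cdot)^T\lambda)(y)v+\nabla_3g(y)^T\xi$.

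The main obstacle is verifying that the limit $\lambda$ lies in the directional set $\Lambda(y,y^*;v)$, where $y^*:=\nabla g(y)^T\lambda$. For any competitor $\tilde\lambda\in\Lambda(y,y^*)$, convexity of $D$ yields
\[\langle\tilde\lambda-\lambda_k,\,g(y+t_kv_k)-g(y)\rangle\le 0\quad\forall k.\]
The first-order component of the second-order expansion is controlled by combining $\nabla g(y)^T(\tilde\lambda-\lambda)=0$ with the orthogonality $\langle\lambda_k-\lambda,g(y+t_kv_k)-g(y)\rangle=0$ supplied by $(\lambda_k-\lambda)\in N_{\K_D(g(y),\lambda)}(g(y+t_kv_k)-g(y))$; dividing by $t_k^2$ and letting $k\to\infty$ deliver $v^T\nabla^2\langle\tilde\lambda^T g\rangle(y)v\le v^T\nabla^2\langle\lambda^T g\rangle(y)v$, i.e.\ $\lambda\in\Lambda(y,y^*;v)$. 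Finally, equivalence of the two displayed formulas follows from $N_{\K_{\gph\Gamma}(y,y^*)}(v)=\nabla g(y)^T N_{\K_D(g(y),\lambda)}(\nabla g(y)v)$, a metric-subregularity consequence combined with the polyhedral polar calculus, so projection by $\pi_3$ converts the first form into the second, with $\mu\in N_D(g(y))$ encoding the freedom in the choice of $y^*\in\widehat N_{\gph\Gamma}(y)\cap[v]^\perp$.
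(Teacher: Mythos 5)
Your proposal takes a genuinely different route from the paper: the paper deduces the first equality immediately by invoking \cite[Theorem~5.3]{GfrMo17} (the Gfrerer--Mordukhovich formula for the graphical derivative of parametric normal-cone maps), and then obtains the second equality from the first by computing $\K_{\gph\Gamma}(y,y^*)$, its polar, and $\pi_3$. You instead try to prove the explicit (second) formula from scratch by direct sequential arguments. Your argmax argument (showing that the limit $\lambda$ lies in $\Lambda(y,y^*;v)$ via the second-order expansion and the two orthogonality relations) is correct; this is exactly the sort of estimate that underlies the cited theorem. But two of the other steps have genuine gaps, and they are precisely the technical heart of what \cite{GfrMo17} establishes.

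The first gap is in the inclusion $\subseteq$: you assert that ``a standard polyhedral bound forces $(\lambda_k-\lambda)/t_k$ to stay bounded.'' This is not standard and in fact does not follow from polyhedrality of $\gph N_D$ alone. What you do control, via Taylor expansion of $\nabla_3 g(y+t_kv_k)^T\lambda_k=z^*+t_kw_k^*$, is that $\nabla_3 g(y)^T(\lambda_k-\lambda)/t_k$ is bounded; but if $\ker\nabla_3 g(y)^T$ intersects the relevant tangent directions nontrivially (which Assumption~\ref{AssA1} does not rule out, since it only gives metric subregularity and not a non-degeneracy condition), then $(\lambda_k-\lambda)/t_k$ can blow up. Decomposing the multiplier increments relative to $\ker\nabla_3 g(y)^T$ and establishing that the limit of the controlled part still lies in $N_{\K_D(g(y),\lambda)}(\nabla g(y)v)$ requires a Hoffman-type argument in a moving polyhedral system; this is nontrivial and is not supplied by ``polyhedrality'' in a direct sense.

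The second gap is in the inclusion $\supseteq$: you need to ``match $a(t)$ with $g(y(t))$ using Assumption~\ref{AssA1}'' and then ``correct $\lambda(t)$ by an $o(t)$-term.'' But Assumption~\ref{AssA1} only gives metric subregularity of $g(p,x,\cdot)-D$, which lets you land in $D$; it does not let you hit the specific value $a(t)$, nor does it give you a multiplier $\lambda'(t)\in N_D(g(y(t)))$ that is $o(t)$-close to $\lambda+t\xi$, since $N_D(g(y(t)))$ need not contain any point close to $\lambda+t\xi$ when $g(y(t))$ lands on a smaller face of $D$ than $a(t)$. What is actually needed here is a directional metric subregularity of the joint mapping $(y,\lambda)\mapsto(g(y),\lambda)-\gph N_D$ — exactly the hypothesis that the paper adds explicitly in the second part of Theorem~\ref{ThGraphDer} when it proves derivability of tangents to $\gph G$. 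The reason the result for $D\Psi$ holds under Assumption~\ref{AssA1} alone, without that extra hypothesis, is again the content of \cite[Theorem~5.3]{GfrMo17}, which exploits the free $z$-variable in a more careful way than the sketch you give. As written, the proposal does not close this gap.
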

\begin{proof}
 The first equality is an immediate consequence of \cite[Theorem 5.3]{GfrMo17}.
 By $y^*\in N_{T_{\gph\Gamma}(y)}(v)=\widehat N_{\gph\Gamma}(y)\cap [v]^\perp$ we have $y^*=\nabla g(y)^T\mu$ for some $\mu\in N_D\big(g(y)\big)$ with $\mu^T\nabla g(y)v=0$ and due to $\lambda\in \Lambda(y,\nabla g(y)^T\mu;v)$ we also have $\nabla g(y)^T\lambda=y^*$.
 Since
 \[\K_{\gph\Gamma}(y,y^*)=\K_{\gph\Gamma}(y,\nabla g(y)^T\lambda)=\{v\mv \nabla g(y)v\in T_D(g(y)),\ \lambda^T\nabla g(y)v=0\}=\nabla g(y)^{-1}\K_D(g(y),\lambda),\]
 we obtain $\K_{\gph\Gamma}(y,y^*)^\circ=\nabla g(y)^T\K_D(g(y),\lambda)^\circ$ by \cite[Corollary 16.3.2]{Ro70} and by taking into account that the set $\nabla g(y)^T\K_D(g(y),\lambda)^\circ$ is a convex polyhedral cone and therefore closed. Thus
 \begin{align*}N_{\K_{\gph\Gamma}(y,y^*)}(v)&=\K_{\gph\Gamma}(y,y^*)^\circ\cap [v]^\perp
 =\{\nabla g(y)^T\eta \mv \eta \in \K_D(g(y),\lambda)^\circ,\ v^T\nabla g(y)^T\eta=0\}\\
 &=\nabla g(y)^TN_{\K_D(g(y),\lambda)}(\nabla g(y)v)\end{align*}
 showing $\pi_3(N_{\K_{\gph\Gamma}(y,y^*)}(v))=\nabla_3 g(y)^TN_{\K_D(g(y),\lambda)}(\nabla g(y)v)$ and the proof is complete.
 \if{$N_{\K_{\gph\Gamma}(y,\nabla g(y)^T\lambda)}(v)=\Big(\nabla
  g(y)^T\big(N_D(g(y))+[\lambda]\big)\Big)\cap [v]^\perp$, we obtain
\[\pi_3(N_{\K_{\gph\Gamma}(y,y^*)}(v))=\{\nabla_3 g(y)^T \eta \mv \eta\in
N_D(g(y))+[\lambda], \eta^T \nabla g(y)v=0\}.\]
Next, by using the identity
\begin{align*}
\big(N_D(g(y))+[\lambda]\big)\cap [\nabla
g(y)v]^\perp&=\big(T_D(g(y))\cap[\lambda]^\perp\big)^\circ\cap [\nabla g(y)v]^\perp\\
&=\K_{D}(g(y),\lambda)^\circ \cap [\nabla g(y)v]^\perp=N_{\K_{D}(g(y),\lambda)}(\nabla g(y)v),
\end{align*}
we show the second equality and the proof is complete.}\fi
\end{proof}
In what follows we will also use the following multiplier sets
\begin{align*}&\Xi((p,x),x^\ast):=\{\mu\in N_D(\tilde g(p,x))\mv b(p,x)^T\mu=x^*\},\\
&\Xi((p,x),x^\ast;(q,u)):=\{\mu\in \Xi((p,x),x^\ast)\mv \nabla \tilde g(p,x)(q,u)\in \K_D(\tilde g(p,x),\mu)\},\\
&\tilde\Lambda\big((p,x),x^*;(q,u)):=\Big\{\lambda\in\Lambda\big((p,x,x),\nabla g(p,x,x)^T\mu;(q,u,u)\big)\mv \mu\in \Xi((p,x),x^*;(q,u))\Big\}
\end{align*}
defined for $(p,x,x^*)\in \gph G$ and directions $(q,u)\in\R^l\times\R^n$.
\begin{theorem}
  \label{ThGraphDer} Assume that Assumption \ref{AssA1} is fulfilled. Then for all $(p,x)\in\dom G$ sufficiently close to $(\pb,\xb)$, all $x^*\in G(p,x)$
  and all $(q,u)\in\R^l\times\R^n$ we have
  \begin{eqnarray}\label{EqInclGraphDer1}
  D G((p,x),x^*)(q,u)&\subset& D\Psi((p,x,x),x^*)(q,u,u)\\
  \label{EqInclGraphDer2}&=&\Big\{\nabla(b(\cdot)^T\lambda)(p,x)(q,u)+b(p,x)^T N_{\K_{D}(\tilde g(p,x),\lambda)}\big(\nabla \tilde g(p,x)(q,u)\big) \mv \qquad\\
  \nonumber&&\hspace{5cm}\lambda\in\tilde\Lambda\big((p,x),x^*;(q,u)\big)\Big\}.
  \end{eqnarray}
  On the other hand, given $(q,u)\in\R^l\times\R^n$, $\lambda \in \tilde\Lambda\big((p,x),x^*;(q,u))$
  and $\eta \in N_{\K_{D}(\tilde  g(p,x),\lambda)}(\nabla \tilde  g(p,x)(q,u))$,
  assume that the mapping $F:\R^l\times\R^n\times\R^s\tto\R^s\times\R^s$ given by
  \begin{equation} \label{EqF}
  F(p',x',\mu):=\big(\tilde g(p',x'), \mu \big)-\gph N_D
  \end{equation}
  is metrically subregular in direction $(q,u,\eta)$ at $\big((p,x,\lambda),(0,0)\big)$. Then we have
  \begin{equation} \label{EqInclDG1}
     \nabla(b(\cdot)^T\lambda)(p,x)(q,u)+b(p,x)^T\eta\in DG((p,x),x^*)(q,u)
  \end{equation}
  and the tangent $\big(q,u,\nabla(b(\cdot)^T\lambda)(p,x)(q,u)+b(p,x)^T\eta\big)$ to $\gph G$ is derivable.
 \end{theorem}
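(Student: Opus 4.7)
The plan is to attack the two inclusions in the first part and the reverse-inclusion/derivability claim separately. For inclusion \eqref{EqInclGraphDer1}, the key observation is that $G(p,x)=\Psi(p,x,x)$ is nothing but the restriction of $\Psi$ to the diagonal in its $x$- and $z$-slots. Hence, if $w\in DG((p,x),x^*)(q,u)$, there exist $t_k\downarrow 0$ and $(q_k,u_k,w_k)\to(q,u,w)$ with $x^*+t_kw_k\in\Psi(p+t_kq_k,x+t_ku_k,x+t_ku_k)$, which immediately yields $(q,u,u,w)\in T_{\gph\Psi}((p,x,x),x^*)$ and hence $w\in D\Psi((p,x,x),x^*)(q,u,u)$.

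For the equality \eqref{EqInclGraphDer2} I would invoke the second formula of Proposition \ref{PropGraphDer} at $y=(p,x,x)$ with $v=(q,u,u)$ and $z^*=x^*$. The chain-rule identities
\[ \nabla g(p,x,x)(q,u,u)=\nabla\tilde g(p,x)(q,u),\qquad \nabla_3 g(p,x,x)=b(p,x),\qquad \nabla(\nabla_3g(\cdot)^T\lambda)(p,x,x)(q,u,u)=\nabla(b(\cdot)^T\lambda)(p,x)(q,u) \]
translate the conditions $\mu\in N_D(g(y))$, $\nabla_3g(y)^T\mu=z^*$, $\mu^T\nabla g(y)v=0$ of Proposition \ref{PropGraphDer} exactly into $\mu\in\Xi((p,x),x^*;(q,u))$, while the remaining requirement $\lambda\in\Lambda(y,\nabla g(y)^T\mu;v)$ is the very definition of $\tilde\Lambda((p,x),x^*;(q,u))$. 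Substitution then produces the claimed set.

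For the second part I fix $\lambda\in\tilde\Lambda((p,x),x^*;(q,u))$ and $\eta\in N_{\K_D(\tilde g(p,x),\lambda)}(\nabla\tilde g(p,x)(q,u))$ and use the assumed directional metric subregularity of $F$ to construct a curve in $\gph G$ with the required tangent. Since $\gph N_D$ is polyhedral, \eqref{EqTangConeGraphNormalCone} yields $(\nabla\tilde g(p,x)(q,u),\eta)\in T_{\gph N_D}(\tilde g(p,x),\lambda)$ as a \emph{derivable} tangent, so a first-order Taylor expansion of $\tilde g$ gives
\[ \dist{(\tilde g(p+tq,x+tu),\lambda+t\eta),\gph N_D}=\dist{0,F(p+tq,x+tu,\lambda+t\eta)}=\oo(t). \]
The directional metric subregularity of $F$ at $((p,x,\lambda),(0,0))$ in direction $(q,u,\eta)$ then furnishes $(p_t,x_t,\lambda_t)\in F^{-1}(0)$ with $\|(p_t,x_t,\lambda_t)-(p+tq,x+tu,\lambda+t\eta)\|=\oo(t)$. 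Setting $x_t^*:=b(p_t,x_t)^T\lambda_t$ we have $(p_t,x_t,x_t^*)\in\gph G$, and expanding in $t$ gives
\[ x_t^*-x^* = t\bigl(\nabla(b(\cdot)^T\lambda)(p,x)(q,u)+b(p,x)^T\eta\bigr)+\oo(t), \]
which simultaneously delivers \eqref{EqInclDG1} and the derivability of $(q,u,\nabla(b(\cdot)^T\lambda)(p,x)(q,u)+b(p,x)^T\eta)$ as a tangent to $\gph G$.

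The main obstacle I anticipate is the careful bookkeeping required to apply the directional metric subregularity: one must check that the points $(p+tq,x+tu,\lambda+t\eta)$ really lie in the directional neighborhood from Definition \ref{DefDirMetrReg} for all small $t>0$ (which is automatic since they move along direction $(q,u,\eta)$) and that the $\oo(t)$ estimate on $\dist{0,F(\cdot)}$ is compatible with the additional slackness condition $\|(q,u,\eta)\|\,\dist{\cdot,\gph F}\leq\delta\|(q,u,\eta)\|\|\cdot\|$. Both are routine consequences of the polyhedrality of $\gph N_D$ and the sharpness of the Taylor expansion.
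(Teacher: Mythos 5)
Your proof is correct and follows essentially the same route as the paper's: part 1 is read off the definitions, part 2 is a direct translation of the second formula of Proposition \ref{PropGraphDer} via the chain-rule identities on the diagonal $z=x$, and part 3 uses the polyhedrality of $\gph N_D$ to produce an $\oo(t)$ estimate for $\dist{0,F(p+tq,x+tu,\lambda+t\eta)}$ and then the assumed directional metric subregularity of $F$ to project back onto $F^{-1}(0)$, finishing with a Taylor expansion of $b(\cdot)^T(\lambda+t\eta_t)$.

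One small correction to the ``anticipated obstacle'' you flag at the end: there is no slackness condition to verify here. Definition \ref{DefDirMetrReg} contains the extra condition $\norm{(u,v)}\dist{(x,y),\gph F}\leq \delta\norm{(u,v)}\norm{(x,y)-(\xb,\yb)}$ only in item 2 (directional metric \emph{regularity}); item 1 (directional metric \emph{subregularity}), which is what the theorem assumes, only requires \eqref{EqSubReg} to hold on a directional neighborhood of the reference point. Since $(p+tq,x+tu,\lambda+t\eta)$ lies exactly on the ray from $(p,x,\lambda)$ in direction $(q,u,\eta)$, it sits in every ${\cal V}_{\rho,\delta}(q,u,\eta)$ for small $t$, and nothing further needs to be checked.
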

\begin{proof}
The inclusion \eqref{EqInclGraphDer1} follows immediately from the definition of the graphical derivative, whereas \eqref{EqInclGraphDer2} is a consequence of Proposition \ref{PropGraphDer}.
Consider now $(q,u)\in\R^l\times\R^n$, $\lambda \in \tilde\Lambda\big((p,x),x^*;(q,u))$
and $\eta \in N_{\K_{D}(\tilde g(p,x),\lambda)}(\nabla \tilde g(p,x)(q,u))$ such that the mapping \eqref{EqF}
is directionally metrically subregular. We conclude that
\[(\nabla \tilde g(p,x)(q,u),\eta)\in\gph N_{\K_{D}(\tilde g(p,x),\lambda)}=T_{\gph N_D}(\tilde g(p,x),\lambda)\]
and thus
\[\big(\tilde g(p,x),\lambda\big)+t\big(\nabla \tilde g(p,x)(q,u),\eta\big)\in\gph N_D\]
for all $t\geq 0$ sufficiently small, because $\gph N_D$ is a polyhedral set.

Consequently we have
\[
\dist{(\tilde g(p+tq,x+tu), \lambda + t \eta),\gph N_D} = \oo(t)
\]
and by the assumed directional metric
subregularity of $F$ we can find for every $t>0$  some
$(q_t,u_t,\eta_t)$
with $\lim_{t_\downarrow 0}(q_t,u_t,\eta_t)=(q,u,\eta)$ and $0\in F(p+tq_t,x+tu_t,\lambda + t \eta_t)$ implying
\[
b(p+tq_t,x+tu_t)^T (\lambda + t \eta_t) \in G(p+tq_t,x+tu_t).
\]
On the other hand, by Taylor expansion we obtain
\begin{eqnarray*}
b(p+tq_t,x+tu_t)^T (\lambda + t \eta_t)&=& b(p,x)^T\lambda +t\big(\nabla (b(\cdot)^T\lambda)(p,x)(q,u)+b(p,x)^T\eta\big)+\oo(t)\\
&=&x^*+t\big(\nabla (b(\cdot)^T\lambda)(p,x)(q,u)+b(p,x)^T\eta\big)+\oo(t)
\end{eqnarray*}
showing \eqref{EqInclDG1} and the derivability of the tangent $\big(q,u,\nabla(b(\cdot)^T\lambda)(p,x)(q,u)+b(p,x)^T\eta\big)$ .
\end{proof}
\begin{theorem}\label{ThGraphDerNonDegen}
Assume that Assumption \ref{AssA1} is fulfilled and assume that we are given $(p,x)\in\tilde g^{-1}(D)$ sufficiently close to $(\pb,\xb)$, $x^*\in G(p,x)$ and $(q,u)\in\R^l\times\R^n$  with $\Xi((p,x),x^*;(q,u))\not=\emptyset.$
\begin{enumerate}
  \item[(i)]Assume that for every $\lambda\in \tilde\Lambda((p,x),x^*;(q,u))$ and every $\eta\in N_{\K_D(\tilde g(p,x),\lambda)}(\nabla \tilde g(p,x)(q,u))$ the mapping $F$ given by \eqref{EqF} is metrically subregular in direction $((q,u),\eta)$.
      Then
        \begin{equation}\label{EqDG=DPsi}DG((p,x),x^*)(q,u)=D\Psi((p,x,x),x^*)(q,u,u)\end{equation}
      and all tangents $(q,u,v^*)\in T_{\gph G}((p,x),x^*)$ are derivable.
  \item[(ii)]If the system $\tilde g(\cdot)\in D$ is non-degenerate in direction $(q,u)$ at $(p,x)$  then \eqref{EqDG=DPsi} holds, all tangents $(q,u,v^*)\in T_{\gph G}((p,x),x^*)$ are derivable
   and for all $\mu\in \Xi((p,x),x^*;(q,u))$ the set $\Lambda((p,x,x),\nabla g(p,x,x)^T\mu,(q,u,u))$ is the singleton $\{\mu\}$. Moreover, there is a directional neighborhood ${\cal V}$ of $(q,u)$ such that for all $(p',x')\in ((p,x)+{\cal V})\cap\tilde g^{-1}(D)$, $(p',x')\not=(p,x)$,  the system $\tilde g(\cdot)-D$ is non-degenerate at $(p',x')$ and for every ${x^*}'\in G(p',x')$ we have
  \[DG((p',x'),{x^*}')(q',u')=D\Psi((p',x',x'),{x^*}')(q',u',u')\ \forall (q',u')\in\R^l\times\R^n.\]
\end{enumerate}
\end{theorem}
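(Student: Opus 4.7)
The plan is to derive (i) directly from Theorem \ref{ThGraphDer} and then to obtain (ii) from (i) combined with Theorem \ref{ThDirNonDegen} and Proposition \ref{PropDirNonDegen}.

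For (i), the inclusion $DG((p,x),x^*)(q,u)\subset D\Psi((p,x,x),x^*)(q,u,u)$ is already supplied by \eqref{EqInclGraphDer1}, so only the reverse inclusion needs proof. Given any element $\nabla(b(\cdot)^T\lambda)(p,x)(q,u)+b(p,x)^T\eta$ of the right-hand side of \eqref{EqInclGraphDer2} with $\lambda\in\tilde\Lambda((p,x),x^*;(q,u))$ and $\eta\in N_{\K_D(\tilde g(p,x),\lambda)}(\nabla\tilde g(p,x)(q,u))$, the assumed directional metric subregularity of $F$ at $((p,x,\lambda),0)$ in direction $((q,u),\eta)$ lets me invoke the second part of Theorem \ref{ThGraphDer}, which both places the element in $DG((p,x),x^*)(q,u)$ and certifies derivability of the corresponding tangent to $\gph G$. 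Hence \eqref{EqDG=DPsi} holds, and because every $v^*\in DG((p,x),x^*)(q,u)$ appears in this form, every tangent $(q,u,v^*)\in T_{\gph G}((p,x),x^*)$ is derivable.

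For (ii), I would first observe that directional non-degeneracy of $\tilde g(\cdot)\in D$ in direction $(q,u)$ at $(p,x)$ triggers the hypothesis of (i). Indeed, for any $\lambda\in N_D(\tilde g(p,x))$ and any $\eta\in N_{\K_D(\tilde g(p,x),\lambda)}(\nabla\tilde g(p,x)(q,u))$ the pair $(\lambda,\eta)$ lies in $\Theta((p,x),(q,u))$, so Theorem \ref{ThDirNonDegen} yields metric regularity of $F$ in direction $((q,u,\eta),(0,0))$ at $((p,x,\lambda),(0,0))$, and hence metric subregularity in direction $((q,u),\eta)$ by the remark following Definition \ref{DefDirMetrReg}. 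Thus (i) delivers \eqref{EqDG=DPsi} and the derivability claim. For the singleton statement, pick $\mu\in\Xi((p,x),x^*;(q,u))$ and any $\lambda\in\Lambda((p,x,x),\nabla g(p,x,x)^T\mu,(q,u,u))$. Reading $\nabla g(p,x,x)^T(\lambda-\mu)=0$ componentwise gives $\nabla\tilde g(p,x)^T(\lambda-\mu)=0$, while the identity $\nabla\tilde g(p,x)(q,u)=\nabla g(p,x,x)(q,u,u)$ combined with the defining relations of $\Xi$ and $\Lambda$ forces $\lambda^T\nabla\tilde g(p,x)(q,u)=\mu^T\nabla\tilde g(p,x)(q,u)=0$. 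Hence $\lambda,\mu\in N_D(\tilde g(p,x))\cap[\nabla\tilde g(p,x)(q,u)]^\perp=N_{T_D(\tilde g(p,x))}(\nabla\tilde g(p,x)(q,u))$, so $\lambda-\mu\in\Span N_{T_D(\tilde g(p,x))}(\nabla\tilde g(p,x)(q,u))$, and directional non-degeneracy forces $\lambda=\mu$.

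For the uniform statement, Proposition \ref{PropDirNonDegen} supplies a directional neighborhood ${\cal V}$ of $(q,u)$ such that for every $(p',x')\in((p,x)+{\cal V})\cap\tilde g^{-1}(D)$ with $(p',x')\neq(p,x)$ the system $\tilde g(\cdot)\in D$ is non-degenerate, i.e., satisfies \eqref{EqDirNonDegen} with $v=0$. Since $\Span N_{T_D(z)}(v)\subset T_D(z)^\circ=\Span N_D(z)$ for every $v\in T_D(z)$, plain non-degeneracy at $(p',x')$ automatically yields directional non-degeneracy there in every direction $(q',u')$. Applying the already established first half of (ii) at $(p',x')$ in direction $(q',u')$ then delivers the required identity $DG((p',x'),{x^*}')(q',u')=D\Psi((p',x',x'),{x^*}')(q',u',u')$ for every ${x^*}'\in G(p',x')$; directions $(q',u')$ with $\Xi((p',x'),{x^*}';(q',u'))=\emptyset$ are covered trivially because both sides vanish by \eqref{EqInclGraphDer1}--\eqref{EqInclGraphDer2}. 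The main technical obstacle is the orthogonality and span bookkeeping in the singleton step; once that is in place, the remaining parts reduce essentially to applying Theorems \ref{ThGraphDer} and \ref{ThDirNonDegen}.
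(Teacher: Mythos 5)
Your proposal is correct and follows the paper's proof in all essentials: (i) via Theorem~\ref{ThGraphDer}, reduction of (ii) to (i) through Theorem~\ref{ThDirNonDegen}, the orthogonality/span argument for the multiplier singleton (the paper works directly with the feasible set $T=\Lambda((p,x,x),\nabla g(p,x,x)^T\mu)$ of the linear program rather than the argmax, but your computation only uses the feasibility constraint $\nabla g(p,x,x)^T(\lambda-\mu)=0$, so the two are the same), and Proposition~\ref{PropDirNonDegen} together with Remark~\ref{RemTrivDir} for the uniform statement. One small slip: $T_D(z)^\circ=\Span N_D(z)$ is false in general (take $D=\R^2_-$, $z=0$); the inclusion $\Span N_{T_D(z)}(v)\subset\Span N_D(z)$ you need follows instead from $N_{T_D(z)}(v)=N_D(z)\cap[v]^\perp\subset N_D(z)$.
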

\begin{proof}(i) follows immediately from Theorem \ref{ThGraphDer}. In order to show the second statement, note  that by Theorem \ref{ThDirNonDegen} the directional non-degeneracy of $\tilde g(\cdot)\in D$ in direction $(q,u)$ implies the assumptions of $(i)$ and therefore \eqref{EqDG=DPsi} follows. In order to show $\Lambda((p,x,x),\nabla g(p,x,x)^T\mu,(q,u,u))=\{\mu\}$ $\forall \mu\in \Xi((p,x),x^*;(q,u))$, fix $\mu\in \Xi((p,x),x^*;(q,u))$ and consider the feasible set
\[T:=\Lambda((p,x,x),\nabla g(p,x,x)^T\mu)=\{\zeta\in N_D(\tilde g(p,x))\mv \nabla g(p,x,x)^T\zeta=\nabla g(p,x,x)^T\mu\}\]
of the linear program defining $\Lambda((p,x,x),\nabla g(p,x,x)^T\mu,(q,u,u))$. We claim that $T=\{\mu\}$. Indeed, $\mu\in T$ and consider any element $\zeta\in T$. Since $\nabla\tilde g(p,x)=(\nabla_1 g(p,x,x),\nabla_2 g(p,x,x)+\nabla_3 g(p,x,x))$, we readily obtain $\nabla\tilde g(p,x)^T\zeta=\nabla\tilde g(p,x)^T\mu$. By definition of $\Xi((p,x),x^*;(q,u))$ we also have $\zeta^T\nabla \tilde g(p,x)(q,u)=\mu^T\nabla \tilde g(p,x)(q,u)=0$ implying  $\zeta,\mu\in N_{T_D(\tilde g(p,x))}(\nabla \tilde g(p,x)(q,u))$. Thus $\nabla\tilde g(p,x)^T(\zeta-\mu)=0$, $\zeta-\mu\in\Span N_{T_D(\tilde g(p,x))}(\nabla \tilde g(p,x)(q,u))$ and we deduce $\zeta-\mu=0$  from the assumed directional non-degeneracy showing $T=\{\mu\}$. Now $\Lambda((p,x,x),\nabla g(p,x,x)^T\mu,(q,u,u))=\{\mu\}$ follows immediately from the definition. The last part of (ii) is implied by Proposition \ref{PropDirNonDegen} taking into account that non-degeneracy of $\tilde g(\cdot)-D$ at $(p',x')$ implies non-degeneracy in any direction and by Remark \ref{RemTrivDir} below.
\end{proof}
\begin{remark}\label{RemTrivDir}Note that in case when $\Xi((p,x),x^*;(q,u))=\emptyset$ we have $D\Psi((p,x,x),x^*)(q,u,u)=\emptyset$ and thus the equality \eqref{EqDG=DPsi} automatically holds by virtue of \eqref{EqInclGraphDer1}. In particular we have $D G((p,x),x^*)(q,u)=D\Psi((p,x,x),x^*)(q,u,u)=\emptyset$ for all directions $(q,u)$ with $\nabla \tilde g(p,x)(q,u)=\nabla g(p,x,x)(q,u,u)\not\in T_D(\tilde g(p,x))$.
\end{remark}
\section{Isolated calmness of the solution mapping}
In what follows 
we define for every $\lambda\in\R^s$ the {\em Lagrangian} $\Lag_\lambda(p,x):\R^l\times\R^n\to \R^n$
by
\[\Lag_\lambda (p,x):=f(p,x) +b(p,x)^T\lambda.\]
\begin{definition}
  We say that the {\em second-order  condition for isolated calmness (SOCIC)} holds at $(\pb,\xb)$ if for every $u\not=0$ and every $\lambda\in \tilde\Lambda\big((\pb,\xb),-f(\pb,\xb);(0,u)\big)$ with
  \[\nabla_2 \tilde g(\pb,\xb)u\in \K_D\big(\tilde g(\pb,\xb)), \lambda)\]
   there exists some $v\in\R^n$ such that
   \[b(\pb,\xb)v\in T_{\K_D(\tilde g(\pb,\xb),\lambda)}\big(\nabla_2\tilde g(\pb,\xb)u\big)\]
   and
  \begin{equation}
    \label{EqSuffSecOrder}v^T\nabla_2\Lag_{\lambda}(\pb,\xb)u<0.
  \end{equation}
\end{definition}

\begin{theorem}Assume that Assumption \ref{AssA1} is fulfilled.
  If SOCIC holds at $(\pb,\xb)$, then the solution map $S$ to the variational system \eqref{EqVarSystem} has the isolated calmness property at $(\pb,\xb)$.

  Conversely, if for every $u\not=0$ there holds
  \begin{equation}\label{EqDQ=DPsi}
        DG\big((\pb,\xb),-f(\pb,\xb)\big)(0,u)=D\Psi\big((\pb,\xb,\xb),-f(\pb,\xb)\big)(0,u,u)
  \end{equation}
     and the mapping $M =f+G$ is metrically subregular in direction $(0,u)$ at $((\pb,\xb),0)$, SOCIC is also necessary for the isolated calmness property of $S$ at $(\pb,\xb)$.
\end{theorem}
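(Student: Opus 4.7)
The plan is to reduce both implications to the generalized Levy--Rockafellar criterion in Theorem \ref{ThIsolCalmness}, applied to the mapping $M=f+G$. Since $f$ is smooth, the chain rule for graphical derivatives yields
\[DM(\pb,\xb,0)(0,u) = \nabla_2 f(\pb,\xb)u + DG\big((\pb,\xb),-f(\pb,\xb)\big)(0,u),\]
so that $0\in DM(\pb,\xb,0)(0,u)$ amounts to $-\nabla_2 f(\pb,\xb)u\in DG\big((\pb,\xb),-f(\pb,\xb)\big)(0,u)$. By Theorem \ref{ThGraphDer}, inclusions \eqref{EqInclGraphDer1}--\eqref{EqInclGraphDer2}, this in turn forces the existence of $\lambda\in\tilde\Lambda\big((\pb,\xb),-f(\pb,\xb);(0,u)\big)$ and $\eta\in N_{\K_D(\tilde g(\pb,\xb),\lambda)}\big(\nabla_2\tilde g(\pb,\xb)u\big)$ with
\[-\nabla_2 f(\pb,\xb)u = \nabla_2\big(b(\cdot)^T\lambda\big)(\pb,\xb)u + b(\pb,\xb)^T\eta,\]
i.e.\ $\nabla_2\Lag_\lambda(\pb,\xb)u + b(\pb,\xb)^T\eta = 0$. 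In particular $\nabla_2\tilde g(\pb,\xb)u\in\K_D(\tilde g(\pb,\xb),\lambda)$.

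For the sufficiency claim, I would fix any $u\neq 0$ with $0\in DM(\pb,\xb,0)(0,u)$, pick the corresponding $\lambda,\eta$ above, and then invoke SOCIC to obtain $v\in\R^n$ with $b(\pb,\xb)v\in T_{\K_D(\tilde g(\pb,\xb),\lambda)}\big(\nabla_2\tilde g(\pb,\xb)u\big)$ and $v^T\nabla_2\Lag_\lambda(\pb,\xb)u<0$. Taking the inner product of the displayed identity with $v$ gives
\[0 = v^T\nabla_2\Lag_\lambda(\pb,\xb)u + \langle b(\pb,\xb)v,\eta\rangle,\]
and the polarity $\eta\in N_{\K_D(\tilde g(\pb,\xb),\lambda)}(\nabla_2\tilde g(\pb,\xb)u) = T_{\K_D(\tilde g(\pb,\xb),\lambda)}(\nabla_2\tilde g(\pb,\xb)u)^\circ$ forces $\langle b(\pb,\xb)v,\eta\rangle\leq 0$, hence $v^T\nabla_2\Lag_\lambda(\pb,\xb)u\geq 0$, contradicting SOCIC. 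Thus $u=0$, and Theorem \ref{ThIsolCalmness} delivers isolated calmness of $S$.

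For the converse, I would argue by contraposition: suppose SOCIC fails, witnessed by some $u\neq 0$ and $\lambda\in\tilde\Lambda\big((\pb,\xb),-f(\pb,\xb);(0,u)\big)$ with $\nabla_2\tilde g(\pb,\xb)u\in\K_D(\tilde g(\pb,\xb),\lambda)$ such that $\langle\nabla_2\Lag_\lambda(\pb,\xb)u,v\rangle\geq 0$ for every $v\in b(\pb,\xb)^{-1}K$, where $K:=T_{\K_D(\tilde g(\pb,\xb),\lambda)}\big(\nabla_2\tilde g(\pb,\xb)u\big)$. Equivalently, $-\nabla_2\Lag_\lambda(\pb,\xb)u\in\big(b(\pb,\xb)^{-1}K\big)^\circ$. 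Since $K$ is a convex polyhedral cone, so is $b(\pb,\xb)^TK^\circ$, and the polyhedral identity $\big(b(\pb,\xb)^{-1}K\big)^\circ = b(\pb,\xb)^TK^\circ$ then produces $\eta\in K^\circ = N_{\K_D(\tilde g(\pb,\xb),\lambda)}(\nabla_2\tilde g(\pb,\xb)u)$ with $b(\pb,\xb)^T\eta = -\nabla_2\Lag_\lambda(\pb,\xb)u$. Inserting this $\lambda,\eta$ into the formula for $D\Psi$ of Proposition \ref{PropGraphDer} shows $-\nabla_2 f(\pb,\xb)u\in D\Psi\big((\pb,\xb,\xb),-f(\pb,\xb)\big)(0,u,u)$, which by the assumed equality \eqref{EqDQ=DPsi} lies in $DG\big((\pb,\xb),-f(\pb,\xb)\big)(0,u)$. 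Therefore $0\in DM(\pb,\xb,0)(0,u)$, and combining this with the assumed metric subregularity of $M$ in direction $(0,u)$, the second part of Theorem \ref{ThIsolCalmness} precludes isolated calmness of $S$ at $(\pb,\xb)$, contradicting the assumption.

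I expect the main technical step to be the polyhedral polar identity $\big(b(\pb,\xb)^{-1}K\big)^\circ = b(\pb,\xb)^TK^\circ$; it is a standard consequence of \cite[Corollary 16.3.2]{Ro70} applied to polyhedral $K$ (no closure is needed), but it is the one place where polyhedrality of $D$ is used crucially and must be invoked explicitly. All other moves are bookkeeping: the chain rule for $M=f+G$, the description of $D\Psi$ already derived in Section 4, the elementary identity $\nabla_2\Lag_\lambda = \nabla_2 f + \nabla_2(b^T\lambda)$, and pairing of primal tangents with dual normals.
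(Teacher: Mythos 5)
Your proof is correct and takes essentially the same route as the paper: reduce to Theorem \ref{ThIsolCalmness} via $DM=\nabla f+DG$, use the inclusion $DG\subset D\Psi$ from Theorem \ref{ThGraphDer} together with the explicit description of $D\Psi$ from Proposition \ref{PropGraphDer}, and translate between SOCIC and the dual condition $0=\nabla_2\Lag_\lambda(\pb,\xb)u+b(\pb,\xb)^T\eta$ by the polyhedral polar identity $\{v\mv b(\pb,\xb)v\in K\}^\circ=b(\pb,\xb)^TK^\circ$. The only cosmetic difference is that the paper first establishes the equivalence of SOCIC with the implication \eqref{EqAuxSuff1} and then applies it twice, whereas you run the two contradiction arguments separately; the underlying computations are identical.
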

\begin{proof}
  We claim that SOCIC is equivalent to the condition
  \begin{equation}\label{EqAuxSuff1}0\in \nabla f(p,x)(0,u)+D\Psi((\pb,\xb,\xb),-f(\pb,\xb))(0,u,u)\ \Rightarrow\ u=0.\end{equation}
   Assume on the contrary that there is some $u\not=0$ such that
   \[0\in \nabla f(p,x)(0,u)+D\Psi((\pb,\xb,\xb),-f(\pb,\xb))(0,u,u).\]
   By \eqref{EqInclGraphDer2}
   this is equivalent to
  \begin{eqnarray}
   \nonumber 0&=& \nabla f(p,x)(0,u)+\nabla \big(b(\cdot)^T\lambda)(\pb,\xb)(0,u)+ b(\pb,\xb)^TN_{\K_D(\tilde g(\pb,\xb),\lambda)}\big(\nabla\tilde g(\pb,\xb)(0,u)\big)\\
  \label{EqAux1}&=&\nabla_2 \Lag_\lambda(\pb,\xb)u+b(\pb,\xb)^TN_{\K_D(\tilde g(\pb,\xb),\lambda)}\big(\nabla_2\tilde g(\pb,\xb)u\big)
  \end{eqnarray}
  for some $\lambda \in\tilde\Lambda((\pb,\xb),-f(\xb); (0,u))$. In particular, $\nabla_2\tilde g(\pb,\xb)u\in \K_D(\tilde g(\pb,\xb),\lambda)$ follows.
  Next observe that
  \begin{eqnarray*}N_{\K_D(\tilde g(\pb,\xb),\lambda)}\big(\nabla_2\tilde g(\pb,\xb)u\big)&=&\K_D(\tilde g(\pb,\xb),\lambda)^\circ\cap[\nabla_2\tilde g(\pb,\xb)u]^\perp=
  \big(\K_D(\tilde g(\pb,\xb),\lambda)+[\nabla_2\tilde g(\pb,\xb)u]\big)^\circ\\
  &=&\Big(T_{\K_D(\tilde g(\pb,\xb),\lambda)}\big(\nabla_2\tilde g(\pb,\xb)u\big)\Big)^\circ
  \end{eqnarray*}
  and thus
  \[b(\pb,\xb)^TN_{\K_D(\tilde g(\pb,\xb),\lambda)}\big(\nabla_2\tilde g(\pb,\xb)u\big)=\big\{v\mv b(\pb,\xb)v\in T_{\K_D(\tilde g(\pb,\xb),\lambda)}\big(\nabla_2\tilde g(\pb,\xb)u\big)\big\}^\circ.\]
   This follows from \cite[Corollary 16.3.2]{Ro70} because the set on the left hand side is a convex polyhedral set and therefore closed. Thus \eqref{EqAux1} is equivalent to
  \[-\nabla_2 \Lag_\lambda(\pb,\xb)u\in \big\{v\mv b(\pb,\xb)v\in T_{\K_D(\tilde g(\pb,\xb),\lambda)}\big(\nabla_2\tilde g(\pb,\xb)u\big)\big\}^\circ\]
  which in turn is equivalent to
  \[-v^T\nabla_2 \Lag_\lambda(\pb,\xb)u\leq 0\ \forall v: b(\pb,\xb)v\in T_{\K_D(\tilde g(\pb,\xb),\lambda)}\big(\nabla_2\tilde g(\pb,\xb)u\big)\]
  contradicting \eqref{EqSuffSecOrder}. Thus the claimed equivalence between SOCIC and  \eqref{EqAuxSuff1} holds true.
  Combining Theorem \ref{ThIsolCalmness} and \eqref{EqInclGraphDer1} we see that the condition \eqref{EqAuxSuff1} and consequently SOCIC as well are sufficient for the isolated calmness property of $S$ at $(\pb,\xb)$.

  In order to show the second statement of the theorem, just note that condition \eqref{EqDQ=DPsi} ensures that \eqref{EqAuxSuff1} and SOCIC are equivalent to the condition
  \[0\in \nabla f(p,x)(0,u)+DG((\pb,\xb,\xb),-f(\pb,\xb))(0,u)\ \Rightarrow\ u=0\]
  and thus by Theorem \ref{ThIsolCalmness} the necessity of SOCIC for the isolated calmness property of $S$ follows.
\end{proof}
By Theorem \ref{ThGraphDerNonDegen}(ii), a sufficient condition for \eqref{EqDQ=DPsi} is that the system $\tilde g(\cdot)\in D$ is non-degenerate in every direction $(0,u)$, $u\not=0$ at $(\pb,\xb)$. We now state a sufficient condition for the metric regularity of the mapping $M=f+G$ in some direction $(q,u)$.
\begin{theorem}\label{ThDirMetrRegM} Let $(q,u)\in\R^l\times\R^n$ and assume that the system $\tilde g(\cdot)\in D$ is non-degenerate in direction $(q,u)$ at $(\pb,\xb)$. Further assume that for every $\hat\lambda\in \Xi((\pb,\xb),-f(\pb,\xb);(q,u))$, every $\eta\in N_{\K_D(\tilde g(\pb,\xb),\hat\lambda)}(\nabla \tilde g(\pb,\xb)(q,u))$ satisfying  $0=\nabla\Lag_{\hat\lambda}(\pb,\xb)(q,u)+b(\pb,\xb)^T\eta$, every pair of faces $\F_1,\F_2$ of the critical cone $\K_D(\tilde g(\pb,\xb),\hat\lambda)$ with $\nabla \tilde g(\pb,\xb)(q,u)\in \F_2\subset \F_1\subset [\eta]^\perp$ and  for  every $0\not=w\in\R^n$ with $b(\pb,\xb)w\in  \F_1-\F_2$ there is some $(\tilde q,\tilde u)$ such that $\nabla\tilde  g(\pb,\xb)(\tilde q,\tilde u)\in \F_1-\F_2$ and
   \[w^T\nabla\Lag_{\hat\lambda}(\pb,\xb)(\tilde q,\tilde u)>0.\]
Then the mapping $M$ is metrically regular in direction $((q,u),0)$ at $((\pb,\xb),0)$.
\end{theorem}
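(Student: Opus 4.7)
The plan is to verify the directional coderivative criterion of Theorem~\ref{ThDirMetrReg} for the direction $((q,u),0)$: every $v^*$ satisfying $0 \in D^* M\bigl(((\pb,\xb),0);((q,u),0)\bigr)(v^*)$ must be trivial. Arguing by contradiction, suppose some nonzero $v^*$ lies in this preimage. Since $f$ is continuously differentiable, the sum rule for the directional limiting coderivative reduces the inclusion to
\[
-\nabla f(\pb,\xb)^T v^* \in D^*G\bigl(((\pb,\xb),-f(\pb,\xb));((q,u),-\nabla f(\pb,\xb)(q,u))\bigr)(v^*),
\]
transferring the analysis to the set-valued part $G(p,x)=b(p,x)^T N_D(\tilde g(p,x))$.

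I would next compute $D^*G$ through a directional chain rule. Here directional non-degeneracy of $\tilde g(\cdot)\in D$ in direction $(q,u)$ at $(\pb,\xb)$ serves as the governing constraint qualification: by Theorem~\ref{ThDirNonDegen} it delivers directional metric regularity of the auxiliary mapping $F$ from \eqref{EqTildeF} at $((\pb,\xb),\hat\lambda)$ in every direction $((q,u),\eta)$ with $\hat\lambda\in\Xi((\pb,\xb),-f(\pb,\xb);(q,u))$ and $\eta\in N_{\K_D(\tilde g(\pb,\xb),\hat\lambda)}(\nabla\tilde g(\pb,\xb)(q,u))$. Transporting the directional limiting normal cone through the composition defining $G$, the displayed inclusion forces the existence of such $\hat\lambda,\eta$ satisfying
\[
0 = \nabla\Lag_{\hat\lambda}(\pb,\xb)(q,u) + b(\pb,\xb)^T\eta,
\]
together with an element of $N_{\gph N_D}\bigl((\tilde g(\pb,\xb),\hat\lambda);(\nabla\tilde g(\pb,\xb)(q,u),\eta)\bigr)$ encoding the remaining coderivative information about $v^*$.

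Now I would invoke the face decomposition of \cite[Theorem~2.12]{GfrOut16} (as already exploited in Lemma~\ref{LemF_DirMetrReg}): this directional normal cone equals the union, over pairs of faces $\F_2\subset\F_1$ of $\K_D(\tilde g(\pb,\xb),\hat\lambda)$ with $\nabla\tilde g(\pb,\xb)(q,u)\in\F_2\subset\F_1\subset[\eta]^\perp$, of the product cones $(\F_1-\F_2)^\circ\times(\F_1-\F_2)$. Setting $w:=-v^*\neq 0$, the $\lambda$-direction component of the relevant normal yields $b(\pb,\xb)w\in\F_1-\F_2$, while the $y$-direction component combines with the Lagrangian to give
\[
w^T\nabla\Lag_{\hat\lambda}(\pb,\xb)(\tilde q,\tilde u)\le 0 \qquad\text{whenever }\nabla\tilde g(\pb,\xb)(\tilde q,\tilde u)\in\F_1-\F_2.
\]
The tuple $(\hat\lambda,\eta,\F_1,\F_2,w)$ therefore falls in the scope of the theorem's hypothesis, which supplies some $(\tilde q,\tilde u)$ with $\nabla\tilde g(\pb,\xb)(\tilde q,\tilde u)\in\F_1-\F_2$ and $w^T\nabla\Lag_{\hat\lambda}(\pb,\xb)(\tilde q,\tilde u)>0$---contradicting the inequality just displayed. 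Hence $v^*=0$, and Theorem~\ref{ThDirMetrReg} delivers the claimed metric regularity of $M$ in direction $((q,u),0)$.

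The decisive technical step will be the chain-rule computation carrying $D^*M$ down to the face structure of $N_{\gph N_D}$: directional non-degeneracy is the indispensable qualification making this calculus go through, and careful bookkeeping is needed to read off the primal condition on $b(\pb,\xb)w$ and the dual inequality involving $\Lag_{\hat\lambda}$ from a single normal to $\gph N_D$, with the correct sign identification between $w$ and $v^*$. Once this dictionary is in place, matching the coderivative data against the hypothesis and extracting the contradiction is routine.
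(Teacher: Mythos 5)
Your high-level strategy matches the paper's: you reduce to the directional coderivative criterion (Theorem~\ref{ThDirMetrReg}), peel off the smooth term $f$, and seek a contradiction with the face hypothesis. But the core of your argument rests on an unexhibited ``directional chain rule'' that computes or tightly estimates $D^*G$ directly in terms of $N_{\gph N_D}$, and this is exactly where the proof has to do real work that your sketch does not do.

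Concretely, there are two genuine gaps. First, $G(p,x)=b(p,x)^T N_D(\tilde g(p,x))$ is not a plain composition: $b(p,x)=\nabla_3 g(p,x,x)$ is \emph{not} the Jacobian of $\tilde g(p,x)=g(p,x,x)$, so the quasi-variational structure means the curvature term $\nabla(b(\cdot)^T\hat\lambda)$ inside $\nabla\Lag_{\hat\lambda}$ cannot be read off from $N_{\gph N_D}$ alone; some device must produce it. You assert that ``the $y$-direction component combines with the Lagrangian'' to give $w^T\nabla\Lag_{\hat\lambda}(\pb,\xb)(\tilde q,\tilde u)\le 0$, but the element of $(\F_1-\F_2)^\circ$ only gives $\mu^T\nabla\tilde g(\pb,\xb)(\tilde q,\tilde u)\le 0$; converting this into the Lagrangian inequality requires the second-order data and a precise coderivative formula you never state. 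Second, you never construct the vector $\eta$ with $b(\pb,\xb)^T\eta=-\nabla\Lag_{\hat\lambda}(\pb,\xb)(q,u)$ and $\F_1\subset[\eta]^\perp$, nor do you establish that the limiting multiplier $\hat\lambda$ actually belongs to $\Xi((\pb,\xb),-f(\pb,\xb);(q,u))$; these facts are indispensable for placing the triple $(\hat\lambda,\eta,\F_1,\F_2)$ in the scope of the hypothesis, and the paper has to work for them (Hoffman's lemma gives $\eta$, and \cite[Lemma~4H.2]{DoRo14} plus a relative-interior argument gives $\nabla\tilde g(\pb,\xb)(q,u)\in\F_2$).

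The paper avoids any exact coderivative formula for $G$: it unwinds the directional limiting normal as a sequence of Fr\'echet normals at perturbed points $(p_k,x_k)$, uses the uniform directional non-degeneracy (Proposition~\ref{PropDirNonDegen} and Theorem~\ref{ThGraphDerNonDegen}) to get the explicit description of $\gph DG((p_k,x_k),x_k^*)$ and the uniqueness of $\lambda_k$, extracts $\hat\lambda$, $\eta$, $\F_1$, $\F_2$ and $b(\pb,\xb)w\in\F_1-\F_2$ from that sequential data, and then closes the argument by inserting the specific tangent $(\tilde q_k,\tilde u_k,\nabla(b(\cdot)^T\lambda_k)(p_k,x_k)(\tilde q_k,\tilde u_k))$ into the polarity relation and passing to the limit. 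Your plan replaces all of this with a one-line invocation of a chain rule that is neither quoted from the literature nor proved, and the ``careful bookkeeping'' you defer to is precisely the substance of the theorem. As it stands, the proposal identifies the right target and the right face machinery, but it does not constitute a proof.
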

\begin{proof}
  By contraposition. Assume on the contrary that $M=f+G$ is not metrically regular in direction $((q,u),0)$ at $((\pb,\xb),0)$. By virtue of Theorem \ref{ThDirMetrReg} there is some $w\not=0$ such that $(0,0)\in D^*(f+G)\big(((\pb,\xb),0); ((q,u),0)\big)(-w)$. In particular, this implies
  \[0\in DM((\pb,\xb),0)(q,u)=\nabla f(\pb,\xb)(q,u)+DG((\pb,\xb),-f(\pb,\xb))(q,u).\]
  By the definition of the directional limiting coderivative there are sequences $t_k\downarrow 0$, $(q_k,u_k,w_k^*)\to (q,u,0)$ and $(q_k^*,u_k^*,w_k)\to (0,0,w)$ such that
  \[(q_k^*,u_k^*,w_k)\in \widehat N_{\gph(f+G)}((p_k,x_k),t_kw_k^*),\]
   where $p_k:=\pb+t_kq_k$, $x_k:=\xb+t_ku_k$. Hence $((q_k^*,u_k^*)+\nabla f(p_k,x_k)^Tw_k,w_k)\in
\widehat N_{\gph G}((p_k,x_k),x_k^*)$, where $x_k^*:=t_kw_k^*-f(p_k,x_k)$, which is  equivalent to
 \begin{equation}\label{EqPolarity}(q_k^*+\nabla_1f(\pb,\xb)^Tw_k)^T\sigma +(u_k^*+\nabla_2f(\pb,\xb)^Tw_k)^T\xi+w_k^T\xi^*\leq 0\ \forall (\sigma,\xi,\xi^*)\in \gph DG((p_k,x_k),x_k^*).\end{equation}

By Proposition \ref{PropDirNonDegen},  the system $\tilde g(\cdot)\in D$ is non-degenerate at $(p_k,x_k)$ and we deduce from Theorem  \ref{ThGraphDerNonDegen} that $DG((p_k,x_k),x_k^*)(q',u')=D\Psi((p_k,x_k),x_k^*)(q',u',u')$ $\forall (q',u')\in\R^l\times\R^n$. Hence, by taking $\sigma=0$, $\xi=0$ we obtain
\begin{equation} \label{EqAuxPolar}w_k^T b(p_k,x_k)^T\zeta^*\leq 0\ \forall \zeta^*\in \K_D(\tilde g(p_k,x_k),\lambda)^\circ,\ \lambda\in\tilde\Lambda\big((p_k,x_k),x_k^*,(0,0)\big).\end{equation}
Since $\tilde\Lambda\big((p_k,x_k),x_k^*,(0,0)\big)=\{\lambda\in N_D(\tilde g(p_k,x_k))\mv b(p_k,x_k)^T\lambda=x_k^*\}$ and $x_k^*\in G(p_k,x_k)$, by Assumption \ref{AssA1} there exists for every $k$ some $\lambda_k\in \tilde\Lambda\big((p_k,x_k),x_k^*,(0,0)\big)\cap \kappa\norm{x_k^*}\B_{\R^s}$. By passing to a subsequence if necessary we can assume that $\lambda_k$ converges to some $\hat\lambda$. Obviously we have $\hat\lambda\in N_D(\tilde g(\pb,\xb))$ and $b(\pb,\xb)^T\hat\lambda=-f(\pb,\xb)$. By \cite[Lemma 4H.2]{DoRo14}, for each $k$ sufficiently large there are  two closed faces $\F_2^k\subset \F_1^k$ of the critical cone $\K_D(\tilde g(\pb,\xb),\hat\lambda)$ such that
  $\K_D(\tilde g(p_k,x_k),\lambda_k)=\F_1^k-\F_2^k$  and a close look at the proof of \cite[Lemma 4H.2]{DoRo14} tells us that we also have   $\tilde g(p_k,x_k)-\tilde g(\pb,\xb)\in\ri \F_2^k$. Since $\K_D(\tilde g(\pb,\xb),\hat\lambda)$ is a closed convex cone, it has only finitely many faces and by passing to a subsequence once more we can assume $\F_1^k=\F_1$ and $\F_2^k=\F_2$ for all $k$. A face of a closed convex cone is again a cone and thus $(\tilde g(p_k,x_k)-\tilde g(\pb,\xb))/t_k\in\ri \F_2$ $\forall k$. This yields  by passing to the limit that $\nabla \tilde g(\pb,\xb)(q,u)\in \F_2\subset\K_D(\bar g(\pb,\xb),\hat\lambda)$, and consequently $\hat\lambda\in \Xi\big((\pb,\xb),-f(\pb,\xb),(q,u)\big)$.
  \if{ Since
\[\lambda_k\in N_D(\tilde g(p_k,x_k))=N_D(\tilde g(\pb,\xb))\cap[\tilde g(p_k,x_k)-\tilde g(\pb,\xb)]^\perp\subset [\tilde g(p_k,x_k)-\tilde g(\pb,\xb)]^\perp,\]
we obtain
\[\tilde g(p_k,x_k)-\tilde g(\pb,\xb)]\subset \big(T_D(\tilde g(\pb,\xb))+[\tilde g(p_k,x_k)-\tilde g(\pb,\xb)]\big)\cap[\lambda_k]^\perp=K_D(\tilde g(p_k,x_k),\lambda_k)=\F_1-\F_2\subset [\hat \lambda]^\perp\]
implying $\hat\lambda^T\big(\tilde g(p_k,x_k)-\tilde g(\pb,\xb)\big)=0$. Thus for every $\lambda\in N_D(\tilde g(\pb,\xb))$ with $\nabla g(\pb,\xb,\xb)^T(\lambda-\hat\lambda)=0$ we obtain
\begin{eqnarray*}
  0&\geq&\limsup_{k\to\infty}\frac{\lambda^T(\tilde g(p_k,x_k)-\tilde g(\pb,\xb))}{t_k^2}=\limsup_{k\to\infty}\frac{(\lambda-\hat\lambda)^T(\tilde g(p_k,x_k)-\tilde g(\pb,\xb))}{t_k^2}\\
  &=& \limsup_{k\to\infty}\left((\lambda-\hat\lambda)^T\nabla g(\pb,\xb,\xb)\frac{(p_k-\pb,x_k-\xb,x_k-\xb)}{t_k^2}\right.\\
  &&\hspace{5cm}\left.+ \frac 12 (q_k,u_k,u_k)^T\nabla^2\big((\lambda-\hat\lambda)^Tg(\cdot)\big)(\pb,\xb,\xb)(q_k,u_k,u_k)\right)\\
  &=& \frac 12 (q,u,u)^T\nabla^2\big((\lambda-\hat\lambda)^Tg(\cdot)\big)(\pb,\xb,\xb)(q,u,u)
\end{eqnarray*}
showing $\hat\lambda\in \tilde\Lambda\big((\pb,\xb),-f(\pb,\xb),(q,u)\big)$.}\fi
Further we have
\begin{eqnarray*}t_kw_k^*-f(p_k,x_k)&=&x_k^*=b(p_k,x_k)^T\lambda_k=b(\pb,\xb)^T\lambda_k+t_k\nabla (b(\cdot)^T\lambda_k)(\pb,\xb)(q_k,u_k)+\oo(t_k)\\
&=&-f(\pb,\xb)+b(\pb,\xb)^T(\lambda_k-\hat\lambda)+t_k\nabla (b(\cdot)^T\hat\lambda)(\pb,\xb)(q,u)+\oo(t_k),
\end{eqnarray*}
yielding
\begin{eqnarray}\nonumber b(\pb,\xb)^T\frac{\lambda_k-\hat\lambda}{t_k}&=&w_k^*-\frac{f(p_k,x_k)-f(\pb,\xb)}{t_k}-\nabla (b(\cdot)^T\hat\lambda)(\pb,\xb)(q,u)+\oo(t_k)/t_k\\
\label{EqAux2}&=&w_k^*-\nabla\Lag_{\hat\lambda}(q,u)+\oo(t_k)/t_k.\end{eqnarray}
Since $\lambda_k\in N_D(\tilde g(p_k,x_k))\subset N_D(\tilde g(\pb,\xb))$, it holds that $\lambda_k-\hat \lambda$ and consequently $\frac{\lambda_k-\hat\lambda}{t_k}$ belong to $T_{N_D(\tilde g(\pb,\xb))}(\hat \lambda)$. Because of $\nabla \tilde g(\pb,\xb)(q,u)\in\F_2\subset \F_1$ we conclude  $\nabla \tilde g(\pb,\xb)(q,u)\in\F_1-\F_2=\K_D(\tilde g(p_k,x_k),\lambda_k)$ showing $\lambda_k^T\nabla \tilde g(\pb,\xb)(q,u)=0$. Together with $\hat\lambda^T\nabla \tilde g(\pb,\xb)(q,u)=0$ we obtain
\begin{eqnarray*}\frac{\lambda_k-\hat\lambda}{t_k}&\in& T_{N_D(\tilde g(\pb,\xb))}(\hat \lambda)\cap [\nabla \tilde g(\pb,\xb)(q,u)]^\perp=\big(N_D(\tilde g(\pb,\xb))+[\hat \lambda]\big)\cap [\nabla \tilde g(\pb,\xb)(q,u)]^\perp\\
&=&\big(T_D(\tilde g(\pb,\xb))\cap[\hat \lambda]^\perp\big)^\circ\cap [\nabla \tilde g(\pb,\xb)(q,u)]^\perp=N_{\K_D(\tilde g(\pb,\xb),\hat\lambda)}(\nabla \tilde g(\pb,\xb)(q,u)).
\end{eqnarray*}
 Since $\F_1-\F_2= \K_D(\tilde g(p_k,x_k),\lambda_k)\subset [\lambda_k]^\perp$ and $\F_2\subset\F_1\subset \K_D(\tilde g(\pb,\xb),\hat\lambda)$, we have
\[\F_1-\F_2\subset [\lambda_k]^\perp\cap [\hat\lambda]^\perp=\big([\lambda_k]+ [\hat\lambda]\big)^\perp\subset [\lambda_k-\hat\lambda]^\perp\]
and consequently $[\lambda_k-\hat\lambda]\subset (\Span \F_1)^\perp$.
We can now invoke Hoffman's lemma \cite[Theorem 2.200]{BonSh00} to find for every $k$ some $\eta_k\in  N_{\K_D(\tilde g(\pb,\xb),\hat\lambda)}(\nabla \tilde g(\pb,\xb)(q,u))\cap (\Span \F_1)^\perp$ satisfying
\[b(\pb,\xb)^T\frac{\lambda_k-\hat\lambda}{t_k}=b(\pb,\xb)^T\eta_k\]
 and $\norm{\eta_k}\leq \beta\norm{b(\pb,\xb)^T(\lambda_k-\hat\lambda)/{t_k}}$
for some constant $\beta > 0$ not depending on $k$. Since the right hand side of \eqref{EqAux2} is bounded, so is $\eta_k$ and by possibly passing to a subsequence we can assume that $\eta_k$ converges to some $\eta\in  N_{\K_D(\tilde g(\pb,\xb),\hat\lambda)}(\nabla \tilde g(\pb,\xb)(q,u))\cap (\Span \F_1)^\perp$ satisfying
\[b(\pb,\xb)^T\eta=\lim_{k\to\infty}\big(-\nabla \Lag_{\hat\lambda}(\pb,\xb)(q,u)+\oo(t_k)/t_k+w_k^*\big)=-\nabla \Lag_{\hat\lambda}(\pb,\xb)(q,u).\]
From $\eta\in (\Span \F_1)^\perp$ we conclude $\F_1\subset\Span \F_1\subset [\eta]^\perp$.
Moreover, by passing $k$ to infinity in \eqref{EqAuxPolar} it follows that
  \[w^T b(\pb,\xb)^T\zeta^*\leq 0\ \forall \zeta^*\in(\F_1-\F_2)^\circ,\]
   which is the same as $b(\pb,\xb)w\in \F_1-\F_2$. By the assumption of the theorem there is some $(\tilde q,\tilde u)$ with $\nabla\tilde g(\pb,\xb)(\tilde q,\tilde u)\in \F_1-\F_2$ and $w^T\nabla\Lag_{\hat\lambda}(\pb,\xb)(\tilde q,\tilde u)>0$. Applying Corollary \ref{CorUnionFaces} we obtain
   \[\Span N_{T_D(\tilde g(\pb,\xb))}(\nabla \tilde g(\pb,\xb)(q,u))\supset (\F_2-\F_2)^\circ\supset (\F_1-\F_2)^\circ\]
   implying the condition
   \[\nabla\tilde g(\pb,\xb)^T\mu=0,\ \mu\in(\F_1-\F_2)^\circ\ \Rightarrow\ \mu=0.\]
    From Theorem \ref{ThRS} we can deduce that for every $k$ there is some $(\tilde q_k,\tilde u_k)$ satisfying
    \[\nabla\tilde g(p_k,x_k)(\tilde q_k,\tilde u_k)\in \F_1-\F_2=\K_D(\tilde g(p_k,x_k),\lambda_k)\]
    and
    \[\norm{(\tilde q_k,\tilde u_k)-(\tilde q,\tilde u)}\leq \beta'\dist{\nabla\tilde g(p_k,x_k)(\tilde q,\tilde u), \F_1-\F_2}\leq \beta'\norm{(\nabla \tilde g(p_k,w_k)-\nabla \tilde g(\pb,\xb))(\tilde q,\tilde u)}\]
    for some constant $\beta' \geq 0$ not depending on $k$. Since $\tilde g(\cdot)\in D$ is non-degenerate at $(p_k,x_k)$ by Proposition \ref{PropDirNonDegen}, we obtain $\Lambda((p_k,x_k,x_k),x_k^*,(\tilde q_k,\tilde u_k,\tilde u_k))=\{\lambda_k\}$ by Theorem \ref{ThGraphDerNonDegen} and thus
    \[((\tilde q_k,\tilde u_k),\nabla (b(\cdot)^T\lambda_k)(p_k,x_k)(\tilde q_k,\tilde u_k))\in \gph DG((p_x,x_k),x_k^*).\]
     Hence we obtain from \eqref{EqPolarity}
    \begin{align*}&(q_k^*+\nabla_1 f(p_k,x_k)^Tw_k)^T\tilde q_k+(u_k^*+\nabla_2 f(p_k,x_k)^Tw_k)^T\tilde u_k+ w_k^T\nabla (b(\cdot)^T\lambda_k)(p_k,x_k)(\tilde q_k,\tilde u_k)\\
    &= {q_k^*}^T\tilde q_k +{u_k^*}^T\tilde u_k+w_k^T\nabla \Lag_{\lambda_k}(p_k,x_k)(\tilde q_k,\tilde u_k)\leq 0.
    \end{align*}
    By passing $k$ to infinity this yields the contradiction $w^T\nabla \Lag_{\hat\lambda}(\pb,\xb)(\tilde q,\tilde u)\leq 0$ and hence $M$ is metrically regular in direction $((q,u),0)$ at $((\pb,\xb),0)$.
\end{proof}
In case when $(q,u)=(0,0)$ Theorem \ref{ThDirMetrRegM} constitutes a sufficient condition for the metric regularity of $M$ around $((\pb,\xb),0)$. This is an interesting result  for its own sake. On the other hand, when applying Theorem \ref{ThDirMetrRegM} for directions $(0,u)$, $u\not=0$, we have an efficient tool for verifying the necessity of SOCIC for the  isolated calmness property of $S$.

\begin{remark}
  Condition \eqref{EqDQ=DPsi} and the requirement that $M$ is metrically subregular are fulfilled in particular in case of canonical perturbations, i.e., parametric systems given by \eqref{EqVarSystem} with $p=(p_1,p_2)\in\R^n\times\R^s$,
  $f(p,x)=\hat f(x)-p_1$ and $\tilde g(p,x)=\hat g(x)-p_2$.
\end{remark}
\begin{example}
  \label{ExSOCIC}Consider the variational system \eqref{EqVarSystem} with $D:=\R^2_-$ and $f:\R^2\times\R^2\to\R^2$, $g:\R^2\times\R^2\times\R^2\to\R^2$ given by
  \[f(p,x):=\myvec{x_1-p_1\\-x_2},\quad g(p,x,z):=\myvec{p_2-x_1+x_2+z_2\\-x_1-3x_2+z_2}\]
  at $\pb=\xb=(0,0)$. Condition \eqref{EqDualRCQ} ensuring Assumption 1 reads as
  \[\myvec{0\\\mu_1+\mu_2}=\myvec{0\\0},\ \mu_1,\mu_2\geq 0\ \Rightarrow \mu_1=\mu_2=0\]
  and is certainly fulfilled. Further,
  \[b(p,x)=\left(\begin{array}{cc} 0&1\\0&1
  \end{array}\right),\quad \tilde g(p,x)=\myvec{p_2-x_1+2x_2\\-x_1-2x_2}\]
  and for each $p\in\R^2$ the solution set $S(p)$ consists of those $x$ such that there exists some $\lambda\in N_{\R^2_-}(\tilde g(p,x))$ fulfilling
  \[0=\Lag_\lambda(p,x)=\myvec{x_1-p_1\\-x_2+\lambda_1+\lambda_2}.\]
  Straightforward calculations yield that the solution map $S$ is given by
  \begin{equation}\label{EqSolMapEx}S(p)=\begin{cases}\{(p_1,0),\ (p_1,\frac{p_1-p_2}2)\}&\mbox{if $p_2-p_1\leq 0$, $p_1\geq0$},\\
  \{(p_1,-\frac{p_1}2),\ (p_1,\frac{p_1-p_2}2)\}&\mbox{if $p_2-2p_1\leq 0$, $p_1< 0$},\\
  \emptyset&\mbox{otherwise.}\end{cases}\end{equation}
  We see that $S$ has the isolated calmness property at $(\pb,\xb)$ and we now want to verify that SOCIC is fulfilled. Consider $u\not=0$ such that
  \[\nabla_2 \tilde g(\pb,\xb)u=\myvec{-u_1+2u_2\\-u_1-2u_2}\in T_D(\tilde g(\pb,\xb))=\R^2_-.\]
  In particular we have $u_1\not=0$ because $u_1=0$ implies $u_2=0$ and the case $u=0$ is excluded.
  Since $\Xi\big((\pb,\xb),-f(\pb,\xb)\big)=\{\mu\in\R^{2}_+\mv (0,\mu_1+\mu_2)=(0,0)\}=\{(0,0)\}$, we have $\Xi\big((\pb,\xb),-f(\pb,\xb);(0,u)\big)=\tilde\Lambda\big((\pb,\xb),-f(\pb,\xb);(0,u)\big)=\{(0,0)\}$.
  By choosing $v=(-u_1,0)$ we have $b(\pb,\xb)v=0\in T_{\K_D(\tilde g(\pb,\xb),\lambda)}(\nabla_2\tilde g(\pb,\xb)u)$ and $v^T\nabla_2\Lag_0(\pb,\xb)u=-u_1^2<0$ and SOCIC is established.

  Next we show that the mapping $(p,x)\rightrightarrows M(p,x)=f(p,x)+G(p,x)$ is metrically regular around  $((\pb,\xb),0)$ by applying Theorem \ref{ThDirMetrRegM} with $(q,u)=(0,0)$. The Jacobian $\nabla \tilde g(\pb,\xb)$ has full row rank and hence the system $\tilde g(\cdot)\in \R^2_-$ is non-degenerate. It can be easily deduced that the only $\hat\lambda\in \Xi((\pb,\xb),-f(\pb,\xb);(0,0))$ and the only $\eta\in N_{\K_D(\tilde g(\pb,\xb),\hat\lambda)}(\nabla \tilde g(\pb,\xb)(0,0))$ satisfying  $0=\nabla\Lag_{\hat\lambda}(\pb,\xb)(0,0)+b(\pb,\xb)^T\eta$ are  $\hat\lambda=\eta=(0,0)$. We have to show that for every  pair of faces $\F_2\subset\F_1\subset\R^2_-$ and every $0\not=w\in\R^2$ satisfying $b(\pb,\xb)w=(w_2,w_2)^T\in\F_1-\F_2$ there is some $(\tilde q,\tilde u)$ with
  \begin{equation}\label{EqEx1}\nabla \tilde g(\pb,\xb)(\tilde q,\tilde u)=\myvec{\tilde q_2-\tilde u_1+2\tilde u_2\\-\tilde u_1-2\tilde u_2}\in \F_1-\F_2\ \mbox{and}\ w^T\nabla\Lag_0(\pb,\xb)(\tilde q,\tilde u)=w_1(\tilde u_1-\tilde q_1)-w_2\tilde u_2>0.\end{equation}
  If $w_1\not=0$, this can be easily accomplished by taking $\tilde q_2=\tilde u_1=\tilde u_2=0$ and $\tilde q_1=-w_1$. So let $w_1=0$ and consequently $w_2\not=0$. Then condition \eqref{EqEx1} is fulfilled when we take, e.g., $\tilde u_2=-w_2$, $\tilde u_1=-2\tilde u_2$, $\tilde q_2=\tilde u_1-2\tilde u_2$ and an arbitrary $\tilde q_1$. So, we have detected the metric regularity of $M$ from Theorem \ref{ThDirMetrRegM}.\hfill$\triangle$
\end{example}
\section{On the Aubin property of the solution map}
In the following theorem we state our main result concerning the Aubin property of the solution map $S$ relative to some set $P$.

\begin{theorem}\label{ThFinalRelAubin}Assume that Assumption \ref{AssA1} is fulfilled and we are given a closed set $P\subset \R^l$ containing $\pb$ such that the following assumptions are fulfilled:
  \begin{enumerate}
  \item[(i)]For every $q\in T_P(\pb)$ there is some  $u\in\R^n$ such that
  \begin{eqnarray}
    \label{EqSurjGraphDer2} 0\in \nabla f(\pb,\xb)(q,u)+D\Psi\big((\pb,\xb,\xb),-f(\pb,\xb)\big)(q,u,u).
  \end{eqnarray}
  \item[(ii)]For every $(0,0)\not=(q,u)$ verifying \eqref{EqSurjGraphDer2} the (partial) directional non-degeneracy condition
  \begin{equation}
    \label{EqPartNonDegen}\nabla_2\tilde g(\pb,\xb)^T\mu=0,\ \mu\in \Span N_{T_D(\tilde g(\pb,\xb))}(\nabla \tilde g(\pb,\xb)(q,u))\ \Rightarrow\ \mu=0
  \end{equation}
  is fulfilled and for every $\hat\lambda\in \Xi((\pb,\xb),-f(\pb,\xb);(q,u))$,  every $\eta\in N_{\K_D(\tilde g(\pb,\xb),\hat\lambda)}(\nabla \tilde g(\pb,\xb)(q,u))$ satisfying  $0=\nabla\Lag_{\hat\lambda}(\pb,\xb)(q,u)+b(\pb,\xb)^T\eta$, every pair of faces $\F_1,\F_2$ of the critical cone $\K_D(\tilde g(\pb,\xb),\hat\lambda)$ with $\nabla \tilde g(\pb,\xb)(q,u)\in \F_2\subset \F_1\subset[\eta]^\perp$ and  every $w\not=0$ with $b(\pb,\xb)w\in  \F_1-\F_2$ there is some $\tilde w$ with $\nabla_2\tilde  g(\pb,\xb)\tilde w\in \F_1-\F_2$ such that
   \[w^T\nabla _2\Lag_{\hat\lambda}(\pb,\xb)\tilde w>0.\]
  \end{enumerate}
     Then the solution mapping $S$ to the variational system \eqref{EqVarSystem} has the Aubin property relative to $P$ around $(\pb,\xb)$ and for every $q\in T_P(\pb)$ there holds
   \[DS(\pb,\xb)(q)=\{u\mv 0\in \nabla f(\pb,\xb)(q,u)+D\Psi\big((\pb,\xb,\xb),-f(\pb,\xb)\big)(q,u,u)\}.\]
\end{theorem}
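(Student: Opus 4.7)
\noindent The plan is to invoke Theorem \ref{ThRelAubin} for the map $M=f+G$, where $G(p,x):=\widehat N_{\Gamma(p,x)}(x)=b(p,x)^TN_D(\tilde g(p,x))$ (valid under Assumption \ref{AssA1}), with the given closed set $P$. Its conclusion yields both the Aubin property of $S$ relative to $P$ around $(\pb,\xb)$ and the identity $DS(\pb,\xb)(q)=\{u\mv 0\in DM(\pb,\xb,0)(q,u)\}$ for $q\in T_P(\pb)$. The target formula will then follow from Theorem \ref{ThGraphDerNonDegen}(ii): the partial condition \eqref{EqPartNonDegen} is stronger than the full directional non-degeneracy \eqref{EqDirNonDegen} -- it merely drops the $\nabla_1\tilde g$-block from the antecedent -- so Theorem \ref{ThGraphDerNonDegen}(ii) applies in direction $(q,u)$ and permits the replacement of $DG$ by $D\Psi$ in the right-hand side. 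What remains is to verify the two hypotheses of Theorem \ref{ThRelAubin}.

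\noindent For hypothesis (i), fix $q\in T_P(\pb)$ and choose $u$ from assumption (i) of the present theorem so that \eqref{EqSurjGraphDer2} holds. Proposition \ref{PropGraphDer} then produces $\lambda\in\tilde\Lambda((\pb,\xb),-f(\pb,\xb);(q,u))$ and $\eta\in N_{\K_D(\tilde g(\pb,\xb),\lambda)}(\nabla\tilde g(\pb,\xb)(q,u))$ with $\nabla\Lag_\lambda(\pb,\xb)(q,u)+b(\pb,\xb)^T\eta=0$. The passage \eqref{EqPartNonDegen}$\Rightarrow$\eqref{EqDirNonDegen} combined with Theorem \ref{ThDirNonDegen} (via Lemma \ref{LemF_DirMetrReg}) yields metric regularity, hence subregularity, of the mapping $F$ of \eqref{EqF} in direction $((q,u),\eta)$ at $((\pb,\xb,\lambda),(0,0))$. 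Theorem \ref{ThGraphDer} then forces $(q,u,-\nabla f(\pb,\xb)(q,u))$ to be a derivable tangent to $\gph G$, whence $(q,u,0)$ is a derivable tangent to $\gph M$ by smoothness of $f$; this is stronger than \eqref{EqRelAubin_Cond1}.

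\noindent The main obstacle is the verification of hypothesis (ii) of Theorem \ref{ThRelAubin}. I argue by contraposition, closely mirroring the proof of Theorem \ref{ThDirMetrRegM}. Suppose some nonzero $(q,u)\in T_P(\pb)\times\R^n$ with $0\in DM(\pb,\xb,0)(q,u)$ admits $(q^*,v^*)\not=(0,0)$ with $(q^*,0,-v^*)\in N_{\gph M}((\pb,\xb,0);(q,u,0))$. The $C^1$-diffeomorphism $\Phi(p,x,y):=(p,x,y-f(p,x))$ carries $\gph M$ to $\gph G$ and converts this to a directional limiting normal of $\gph G$ at $(\pb,\xb,-f(\pb,\xb))$ in direction $(q,u,-\nabla f(\pb,\xb)(q,u))$. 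Unfold it through Fr\'echet normals at base points $(p_k,x_k,x_k^\ast)\in\gph G$ with $(p_k,x_k)=(\pb,\xb)+t_k(q_k,u_k)$, and a bounded multiplier sequence $\lambda_k\in\tilde\Lambda((p_k,x_k),x_k^\ast,(0,0))$ (boundedness supplied by Assumption \ref{AssA1}) clustering at some $\hat\lambda\in\Xi((\pb,\xb),-f(\pb,\xb);(q,u))$. Apply \cite[Lemma 4H.2]{DoRo14} to extract faces $\F_2\subset\F_1$ of $\K_D(\tilde g(\pb,\xb),\hat\lambda)$ with $\K_D(\tilde g(p_k,x_k),\lambda_k)=\F_1-\F_2$ and $\nabla\tilde g(\pb,\xb)(q,u)\in\F_2$, and Hoffman's lemma to produce a bounded sequence $\eta_k\in N_{\K_D(\tilde g(\pb,\xb),\hat\lambda)}(\nabla\tilde g(\pb,\xb)(q,u))\cap(\Span\F_1)^\perp$ converging to some $\eta$ with $b(\pb,\xb)^T\eta=-\nabla\Lag_{\hat\lambda}(\pb,\xb)(q,u)$ and $\F_1\subset[\eta]^\perp$. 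Parametrising $\gph DG((p_k,x_k),x_k^\ast)$ by the $D\Psi$-formula of Proposition \ref{PropGraphDer} (Theorem \ref{ThGraphDerNonDegen}(ii) being applicable at $(p_k,x_k)$ through Proposition \ref{PropDirNonDegen}), the Fr\'echet polarity inequality for $\widehat N_{\gph M}$, after the $\nabla f$-contributions cancel, becomes $q_k^*\sigma+u_k^*\xi-(v_k^*)^T\nabla\Lag_{\lambda_k}(p_k,x_k)(\sigma,\xi)-(v_k^*)^Tb(p_k,x_k)^T\zeta\leq 0$ for all $(\sigma,\xi)\in\R^l\times\R^n$ with $\nabla\tilde g(p_k,x_k)(\sigma,\xi)\in\F_1-\F_2$ and all $\zeta\in(\F_1-\F_2)^\circ$. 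Taking $\sigma=\xi=0$ and passing to the limit yields $b(\pb,\xb)v^*\in\F_1-\F_2$. If $v^*\not=0$, hypothesis (ii) of the present theorem (with $w:=v^*$) supplies $\tilde w$ with $\nabla_2\tilde g(\pb,\xb)\tilde w\in\F_1-\F_2$ and $(v^*)^T\nabla_2\Lag_{\hat\lambda}(\pb,\xb)\tilde w>0$; Theorem \ref{ThRS}, applicable by the partial non-degeneracy \eqref{EqPartNonDegen}, lifts $\tilde w$ to $\tilde w_k\to\tilde w$ with $\nabla_2\tilde g(p_k,x_k)\tilde w_k\in\F_1-\F_2$, and plugging $(\sigma,\xi)=(0,\tilde w_k)$, $\zeta=0$ and letting $k\to\infty$ delivers the opposite inequality $(v^*)^T\nabla_2\Lag_{\hat\lambda}(\pb,\xb)\tilde w\leq 0$, a contradiction. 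The residual case $v^*=0$, $q^*\not=0$ is handled by the same face-based machinery, now with $\sigma\not=0$: the partial non-degeneracy \eqref{EqPartNonDegen} again provides, via Theorem \ref{ThRS}, a rich enough supply of lifted test directions $(\sigma,\xi_k)$ with $\nabla\tilde g(p_k,x_k)(\sigma,\xi_k)\in\F_1-\F_2$; the limit inequality degenerates to $q^*\sigma\leq 0$ on a set of $\sigma$ forced by \eqref{EqPartNonDegen} to yield $q^*=0$, the desired contradiction. The delicate bookkeeping in this face-based unfolding -- in particular the double role of \eqref{EqPartNonDegen}, which simultaneously guarantees uniqueness of the multiplier in Theorem \ref{ThGraphDerNonDegen}(ii) and supplies the Robinson-stability lift of the $x$-direction test vector $\tilde w$ -- will be the hardest part of the proof.
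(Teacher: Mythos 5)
Your proposal is correct and follows essentially the same route as the paper. Both invoke Theorem~\ref{ThRelAubin}, verify its hypothesis (i) via derivability of tangents coming from the directional non-degeneracy (Theorem~\ref{ThGraphDerNonDegen}, Remark~\ref{RemDerivTangents}), and verify hypothesis (ii) by contraposition through the same face-based unfolding as in the proof of Theorem~\ref{ThDirMetrRegM}: bounded multiplier extraction from Assumption~\ref{AssA1}, faces $\F_1,\F_2$ via \cite[Lemma~4H.2]{DoRo14}, Hoffman's lemma for $\eta_k$, and Robinson stability (Theorem~\ref{ThRS}) enabled by the partial non-degeneracy \eqref{EqPartNonDegen} to lift both the test vector $\tilde w$ (killing $w$) and the pair $(q^*,\bar u)$ (killing $q^*$). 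The only cosmetic deviations are your explicit use of the shift diffeomorphism $\Phi(p,x,y)=(p,x,y-f(p,x))$ to pass between $\gph M$ and $\gph G$ (the paper works directly with $\widehat N_{\gph(f+G)}$), and the slightly compressed description of the $q^*=0$ step, where the paper picks the specific test direction $\sigma=q^*$ with a lift $\bar u$ from $\nabla_2\tilde g(\pb,\xb)\R^n+(\F_1-\F_2)=\R^s$ to get $\|q^*\|^2\le 0$; you gesture at the same mechanism without spelling out the concrete choice. These are presentational, not substantive, differences.
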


\begin{proof}We will invoke Theorem \ref{ThRelAubin} in order to prove the proposition. Observe that \eqref{EqPartNonDegen} implies the non-degeneracy of the system $\tilde g(\cdot)\in D$ in direction $(q,u)$ at $(\pb,\xb)$ and by Theorem \ref{ThGraphDerNonDegen} we have that $D\Psi\big((\pb,\xb,\xb),-f(\pb,\xb)\big)(q,u,u)=DG\big((\pb,\xb),-f(\pb,\xb)\big)(q,u)$ and all tangents $(q,u,u^*)$ to $\gph G$ at $((\pb,\xb),-f(\pb,\xb))$ are derivable.
Since $DM((\pb,\xb),0)(q,u)=\nabla f(\pb,\xb)(q,u)+DG((\pb,\xb),-f(\pb,\xb))(q,u)$ and taking into account Remark \ref{RemDerivTangents}, assumption (i) of Theorem \ref{ThRelAubin} is satisfied due to the first assumption.

We now show that assumption (ii) of Theorem \ref{ThRelAubin} is fulfilled as well. Assume that we are given a direction $(0,0)\not=(q,u)$ satisfying
\begin{eqnarray*}0&\in&DM((\pb,\xb),0)(q,u)=\nabla f(\pb,\xb)(q,u)+DG((\pb,\xb),-f(\pb,\xb))(q,u)\\
&\subset& \nabla f(\pb,\xb)(q,u)+D\Psi((\pb,\xb,\xb),-f(\pb,\xb))(q,u,u) \end{eqnarray*}
and $(q^*,w)$ such that
$(q^*,0)\in D^*M\big(((\pb,\xb),0);((q,u),0)\big)(-w).$

 By the definition of the directional limiting coderivative there are sequences $t_k\downarrow 0$, $(q_k,u_k,w_k^*)\to (q,u,0)$ and $(q_k^*,u_k^*,w_k)\to (q^*,0,w)$ such that
  \[(q_k^*,u_k^*,w_k)\in \widehat N_{\gph(f+G)}((p_k,x_k),t_kw_k^*),\]
   where $p_k:=\pb+t_kq_k$, $x_k:=\xb+t_ku_k$. We can now proceed as in the proof of Theorem \ref{ThDirMetrRegM} to find the sequences $x_k^*$ and  $\lambda_k$ as well as \[\hat\lambda=\lim_{k\to\infty}\lambda_k\in\Xi((\pb,\xb),-f(\pb,\xb);(q,u)),\ \eta\in N_{\K_D(\tilde g(\pb,\xb),\hat\lambda)}(\nabla \tilde g(\pb,\xb)(q,u))\] with $0=\nabla\Lag_{\hat\lambda}(\pb,\xb)(q,u)+b(\pb,\xb)^T\eta$ and the faces $\F_1,\F_2$ of the critical cone $\K_D(\tilde g(\pb,\xb),\hat\lambda)$ with $\nabla \tilde g(\pb,\xb)(q,u)\subset \F_2\subset \F_1\subset [\eta]^\perp$ such that $\K_D(\tilde g(p_k,x_k),\lambda_k))=\F_1-\F_2$ $\forall k$.
  As in the proof of Theorem \ref{ThDirMetrRegM} we can also deduce $b(\pb,\xb)w\in \F_1-\F_2$. By assumption (ii) of the theorem there is some $\tilde w$ with $\nabla_2\tilde g(\pb,\xb)\tilde w\in \F_1-\F_2$ and $w^T\nabla_2\Lag_{\hat\lambda}(\pb,\xb)\tilde w>0$, provided $w\not=0$, which we now assume.

  Next observe that the implication
  \begin{equation}\label{EqMetrRegSecComp}\nabla_2g(\pb,\xb)^T\mu=0,\ \mu\in (\F_1-\F_2)^\circ\ \Rightarrow\ \mu=0\end{equation}
  follows from \eqref{EqPartNonDegen} by virtue of Corollary \ref{CorUnionFaces}.
  By condition \eqref{EqMetrRegSecComp} and Theorem \ref{ThRS}, there is some real $\beta>0$ such that for every $k$ sufficiently large
  there are some $\tilde w_k$ satisfying
  $\nabla_2\tilde g(p_k,x_k)\tilde w_k\in\F_1-\F_2$ and
  \[\norm{\tilde w_k-\tilde w}\leq \beta\dist{\nabla_2\tilde g(p_k,x_k)\tilde w,\F_1-\F_2}\leq \beta \norm{(\nabla_2\tilde g(p_k,x_k)-\nabla_2\tilde g(\pb,\xb))\tilde w}.\]
  Hence $\nabla \tilde g(p_k,x_k)(0,\tilde w_k)\in \F_1-\F_2=\K_D(\tilde g(p_k,x_k),\lambda_k)$ and, since $\tilde g(\cdot)\in D$ is non-degenerate at $(p_k,x_k)$, we obtain $\Lambda((p_k,x_k,x_k),x_k^*;(0,\tilde w_k,\tilde w_k))=\{\lambda_k\}$ by Theorem \ref{ThGraphDerNonDegen}. Using Theorem \ref{ThGraphDerNonDegen} once more together with \eqref{EqInclGraphDer2} we obtain
   \[(0,\tilde w_k,\nabla (b(\cdot)^T\lambda_k)(p_k,x_k)(0,\tilde w_k))\in \gph DG((p_k,x_k),x_k^*),\]
   yielding
   \[(u_k^*+\nabla_2 f(p_k,x_k)^Tw_k)^T\tilde w_k+ w_k^T\nabla_2 (b(\cdot)^T\lambda_k)(p_k,x_k)\tilde w_k={u_k^*}^T\tilde w_k+w_k^T\nabla_2 \Lag_{\lambda_k}(p_k,x_k)\tilde w_k\leq 0\]
   from \eqref{EqPolarity}. By passing to the limit we obtain the contradiction $w^T\nabla_2 \Lag_{\hat\lambda}\tilde w\leq 0$ and thus $w=0$.
   It remains to show that $q^*=0$. Observe that \eqref{EqMetrRegSecComp} is equivalent to
   \[(\ker \nabla_2 g(\pb,\xb)^T\cap (\F_1-\F_2)^\circ)^\circ= \nabla_2g(\pb,\xb)\R^n+(\F_1-\F_2)=\{0\}^\circ=\R^s.\]
   Hence there is some $\bar u\in\R^n$ with $\nabla_1\tilde g(\pb,\xb)q^*+\nabla_2\tilde g(\pb,\xb)\bar u\in\F_1-\F_2$. Further, by assumption \eqref{EqMetrRegSecComp} and Theorem \ref{ThRS}, there is some real $\beta>0$ such that for every $k$ sufficiently large there exist some vectors $\bar u_k$ satisfying $\nabla_1\tilde g(p_k,x_k)q^*+\nabla_2\tilde g(p_k,x_k)\bar u_k\in\F_1-\F_2$ and $\norm{\bar u_k-\bar u}\leq \beta\dist{\nabla_1\tilde g(p_k,x_k)q^*+\nabla_2\tilde g(p_k,x_k)\bar u,\F_1-\F_2}\leq \beta \norm{(\nabla\tilde g(p_k,x_k)-\nabla\tilde g(\pb,\xb))(q^*,\bar u)}$. Using similar arguments as before we deduce
   \[(q^*,\bar u_k, \nabla (b(\cdot)^T\lambda_k)(p_k,x_k)(q^*,\bar u_k))\in \gph DG((p_k,x_k),x_k^*),\]
   resulting in
   \begin{align*}&(q_k^*+\nabla_1 f(p_k,x_k)^Tw_k)^Tq^*+ (u_k^*+\nabla_2 f(p_k,x_k)^Tw_k)^T\bar u_k+w_k^T\nabla(b(\cdot)^T\lambda_k)(p_k,x_k)(q^*,\bar u_k)\\
   &={q_k^*}^Tq^*+{u_k^*}^T\bar u_k +w_k^T\nabla \Lag_{\lambda_k}(p_k,x_k)(q^*,\bar u_k)\leq 0\end{align*}
   by means of \eqref{EqPolarity}. By passing $k$ to infinity we obtain ${q^*}^Tq^*\leq 0$ implying $q^*=0$. Thus all assumptions of Theorem \ref{ThRelAubin} are fulfilled and the statement is established.
\end{proof}

In case when $P=\R^s$ Theorem \ref{ThFinalRelAubin} improves \cite[Theorem 6]{GfrOut18} by weakening the assumption that the multiplier $\lambda\in N_D(\tilde g(p,x))$ satisfying $b(\pb,\xb)^T\lambda=-f(\pb,\xb)$ is unique.

\begin{example}\label{ExRelAubin}It is easy to see that for the variational system of Example \ref{ExSOCIC} the solution map $S$ given by \eqref{EqSolMapEx} has the Aubin property relative to its domain $\dom S=\{(p_1,p_2)\mv p_2-p_1\leq 0, p_2-2p_1\leq 0\}$. We want now to analyze the conditions on the set $P$ provided by Theorem \ref{ThFinalRelAubin} such that $S$ has the Aubin property relative to $P$. After some calculations we obtain the following table where we list all directions $(q,u)$ such that \eqref{EqSurjGraphDer2} holds as well as $\hat\lambda\in\Xi\big((\pb,\xb), -f(\pb,\xb);(q,u)\big)$ and $\eta\in N_{\K_D(\tilde g(\pb,\xb),\hat\lambda)}(\nabla \tilde g(\pb,\xb)(q,u))$ such that $0=\nabla\Lag_{\hat\lambda}(\pb,\xb)(q,u)+b(\pb,\xb)^T\eta$. In addition we display vector $\nabla \tilde g(\pb,\xb)(q,u)$, useful also for
the computation of $\eta$.
\begin{equation}\label{EqTable}\begin{array}{|c|c|c|c|c|}\hline
q&u&\hat\lambda&\big(\nabla \tilde g(\pb,\xb)(q,u)\big)^T&\eta\in\R^2_+\\\hline
 q_1\geq 0,q_2-q_1\leq 0&(q_1,0)&0&(q_2-q_1,-q_1)&(0,0)\\
q_2-q_1\leq0, q_2-2q_1<0&(q_1,\frac{q_1-q_2}2)&0&(0,q_2-2q_1)&(\frac{q_1-q_2}2,0)\\
q_1\leq 0, q_2=2q_1&(q_1,\frac{-q_1}2)&0&(0,0)&\eta_1+\eta_2=\frac{-q_1}2\\
q_1\leq 0, q_2-2q_1 <0& (q_1,\frac{-q_1}2)&0&(q_2-2q_1,0)&(\frac{-q_1}2,0)\\\hline
\end{array}\end{equation}
From this table we see that condition (i) of Theorem \ref{ThFinalRelAubin} amounts to the requirement that
\[T_P(\pb)\subset \{q\in\R^2\mv q_2-q_1\leq0, q_2-2q_1\leq 0\}(=\dom S).\]

In the next step we will analyze condition (ii) of Theorem \ref{ThFinalRelAubin}. Since $\nabla_2\tilde g(\pb,\xb)$ has full rank, implication \eqref{EqPartNonDegen} holds for any direction $(q,u)$.
Consider now $(0,0)\not=(q,u)$ together with $\hat\lambda=0$ and $\eta$ from table \eqref{EqTable} and faces $\F_1,\F_2$ of the critical cone $\K_D(\tilde g(\pb,\xb),\hat\lambda)=\R^2_-$ satisfying $\nabla \tilde g(\pb,\xb)(q,u)\subset\F_2\subset\F_1\subset[\eta]^\perp$. Observe that $(q,u)\not=(0,0)$ implies $q\not=0$. Further, consider $0\not=w\in\R^2$ such that $b(\pb,\xb)w=(w_2,w_2)^T\in \F_1-\F_2$. It follows that $w_2=0$ whenever $\eta\not=0$. Further, $\F_1=\R^2_-$ when $w_2\not=0$ and $\F_1-\F_2=\R^2$ if $w_2>0$. Our next analysis is split into three cases.

{\bf Case $\mathbf{w_2>0}$:} Then $\F_1-\F_2=\R^2$ and obviously $\tilde w=(0,-w_2)$ fulfills $\nabla_2 \tilde g(\pb,\xb)\tilde w\in\R^2$ and
\[w^T\nabla_2\Lag_{\hat\lambda}(\pb,\xb)\tilde w=w_2^2>0.\]

{\bf Case $\mathbf{w_2<0}$:} It follows that $\eta=0$ and $\F_1=\R^2_-$. If $\nabla \tilde g_1(\pb,\xb)(q,u)<0$ then $\F_1-\F_2\supset\R\times \R_-$ and we can take $\tilde w=(0,-w_2)$ to obtain $\nabla_2 \tilde g(\pb,\xb)\tilde w=(-2w_2,2w_2)^T\in\R\times \R_-\subset\F_1-\F_2$ and $w^T\nabla_2\Lag_{\hat\lambda}(\pb,\xb)\tilde w=w_2^2>0.$ Hence assume that $\nabla \tilde g_1(\pb,\xb)(q,u)=0$. A look at table \eqref{EqTable} tells us that this together with $\eta=0$ and $q\not=0$ is only possible when $q_2=q_1$ and $q_1>0$.  In this case we can take $w=(-1,-2)$ and $\F_2=\{0\}\times\R_-$, $\F_1=\R^2_-$ resulting in $\F_1-\F_2=\R_-\times\R$ and it follows that there does not exist any
\[\tilde w\in\{\tilde w\mv \nabla_2\tilde g(\pb,\xb)\tilde w\in\F_1-\F_2\}=\{\tilde w\mv -\tilde w_1+2\tilde w_2\leq 0\}\]
fulfilling
\[w^T\nabla_2\Lag_{\hat\lambda}(\pb,\xb)\tilde w=-\tilde w_1+2\tilde w_2>0.\]

{\bf Case $\mathbf{w_2=0}$:} Note that $w\not=0$ implies $w_1\not=0$. If $\nabla \tilde g_1(\pb,\xb)(q,u)<0$ then $\F_1-\F_2\supset\R\times \{0\}$ and we can take $\tilde w=(w_1,w_1/2)$ to obtain
$\nabla_2\tilde g(\pb,\xb)\tilde w\in\{0\}\times\R\subset \F_1-\F_2$ and $w^T\nabla_2\Lag_{\hat\lambda}(\pb,\xb)\tilde w=w_1^2>0$. If $\nabla \tilde g_2(\pb,\xb)(q,u)<0$, then we can argue as before to show that $\tilde w=(w_1,-w_1/2)$ fulfills the required conditions. There remains the case that $\nabla \tilde g(\pb,\xb)(q,u)=(0,0)$. A look at Table \ref{EqTable} shows that this is possible for nonzero $q$ only in case when $q_2=2q_1$ and $q_1<0$. Taking $\eta=(-q_1/2,0)$, $\F_2=\F_1=\{(0,0)\}$, we obtain that the only $\tilde w$ with $\nabla_2\tilde g(\pb,\xb)\tilde w\in\F_1-\F_2$ is $\tilde w=(0,0)$ and therefore we again cannot fulfill the condition $w^T\nabla_2\Lag_{\hat\lambda}(\pb,\xb)\tilde w>0$.

The above analysis  shows that we have to exclude the sets $P$ such that
\[T_P((0,0))\cap  \{(t,t),\ (-t,-2t)\mv t>0\}\not=\emptyset.\]
This means that, by virtue of Theorem \ref{ThFinalRelAubin}, $S$ has the Aubin property relative to $P$ around $(\pb,\xb)$ for every closed set $P$ containing $(0,0)$ such that
\[ T_P((0,0))\subset \{q\in\R^2\mv q_2-q_1\leq 0,\ q_2-2q_1\leq 0\}\setminus \{(t,t),\ (-t,-2t)\mv t>0\}(=\inn\dom S\cup\{(0,0)\}).\]
\hfill$\triangle$
\end{example}

\section{Conclusion}

In most rules of generalized differentiation one associates with the data a certain mapping and requires, as a qualification condition, the metric subregularity of this mapping at the considered point, see, e.g., \cite{Iof79d,Iof79a,HenJouOut02,IofOut08}. Correspondingly, in the directional limiting calculus the qualification conditions amount typically to the directional metric subregularity of the respective associated mappings, cf. \cite{BeGfrOut18}. In both cases, however, the required property should be verifiable via suitable conditions expressed solely in terms of problem data. In this paper we construct such conditions on the basis of the (stronger) property of directional metric regularity, see Theorems \ref{ThDirNonDegen}, \ref{ThRelAubin}, \ref{ThDirMetrRegM} and \ref{ThFinalRelAubin}.

In general, the principal questions related to metric subregularity, calmness and the associated areas of error bounds and subtransversality have been thoroughly investigated by many notable researchers including A.~Y.~ Kruger (\cite{DoGfrKruOut18, FabHenKruOut10, FabHenKruOut12, Kru15a} and many other works on this subject). Via the research, discussed in this paper, the authors would like to give credit to their friend Alex on the occasion of his 65$^{\rm th}$ birthday.

\section*{Acknowledgements} The research of the first two authors was  supported by the Austrian Science Fund (FWF) under grant P29190-N32. The  research of the third author was supported by the Grant Agency of the Czech Republic, Projects 17-08182S and 17-04301S and the Australian Research Council, Project DP160100854.


\begin{thebibliography}{99}
\bibitem{BeGfrOut18}
{\sc M. Benko, H. Gfrerer, J.~V. Outrata}, Calculus for directional limiting normal cones and subdifferentials, Set-Valued Var. Anal., DOI 10.1007/s 11 228-018-04+2-5.
%
\bibitem{BaiCa84}{\sc C. Baiocchi, A. Capelo}, {\em Variational and Quasivariational Inequalities. Applications to
Free Boundary Problems}, J. Wiley \& Sons, New York, 1984.
%
\bibitem{BonSh00}{\sc J.~F. Bonnans, A. Shapiro}, {\em Perturbation analysis of optimization problems}, Springer, New
York, 2000.
  %
\bibitem{DoRo04}{\sc A.~L. Dontchev, R.~T. Rockafellar}, Regularity and conditioning of solution mappings in variatonal analysis, Set-Valued Anal., 12 (2004), pp.~79--109.
%
\bibitem{DoRo14}{\sc A.~L. Dontchev, R.~T. Rockafellar}, {\em  Implicit Functions and Solution Mappings}, Springer, Heidelberg, 2014.
%
\bibitem{DoGfrKruOut18}{\sc A.~L. Dontchev, H. Gfrerer, A.~Y. Kruger, J.V.~Outrata}, The radius of metric regularity, submitted, arXiv:1807.02198.
\bibitem{FabHenKruOut10}{\sc M. Fabian, R. Henrion, A.~Y. Kruger, J.~V. Outrata}, Error bounds: necessary and sufficient conditions, Set-Valued  Var. Anal., 18 (2010), pp.~121--149.
\bibitem{FabHenKruOut12}{\sc  M. Fabian, R. Henrion, A.~Y. Kruger, J.~V. Outrata}, About error bounds in metric spaces, In: Operation Research Proceedings 2011 (D. Klatte, H.-J. L\"{u}thi, K. Schmedders, eds.), Springer, Berlin, 2012.
%
%
\bibitem{Gfr11}{\sc H. Gfrerer}, First order and second order characterizations of metric subregularity and calmness of constraint set mappings, SIAM J. Optim., 21 (2011), pp.~1439--1474.
%
\bibitem{Gfr13a}{\sc H. Gfrerer}, On directional metric regularity, subregularity and optimality conditions for nonsmooth mathematical programs, Set-Valued Var. Anal., 21 (2013), pp.~151--176.
\bibitem{Gfr13b}{\sc H. Gfrerer}, On directional metric subregularity and second-order optimality conditions
for a class of nonsmooth mathematical programs, SIAM J. Optim., 23 (2013), pp.~632--665.
%
%
\bibitem{Gfr14a}{\sc H. Gfrerer}, On metric pseudo-(sub)regularity of multifunctions and optimality conditions for degenerated mathematical programs, Set-Valued Var. Anal., 22 (2014), pp.~79--115.
%
\bibitem{GfrKl16}{\sc H. Gfrerer, D. Klatte}, Lipschitz and H\"older stability of optimization problems
and generalized equations, Math.Program., Ser. A, 158(2016), pp.~35--75.
\bibitem{GfrMo17a}{\sc H. Gfrerer, B.~S. Mordukhovich}, Robinson stability of parametric constraint systems via variational analysis, SIAM J. Optim., 27 (2017), pp.~438--465.
%
\bibitem{GfrMo17}{\sc H. Gfrerer, B.~S. Mordukhovich}, Second-order variational analysis of parametric constraint and variational systems,  (2017), to appear in SIAM J. Optim., arXiv:1711.07082.
%
 \bibitem{GfrOut16}
 {\sc H.~Gfrerer, J.V.~Outrata}, On Lipschitzian properties of implicit multifunctions,  SIAM J. Optim., 26 (2016), pp.~2160--2189.
%
\bibitem{GfrOut17}
  {\sc H. Gfrerer, J.~V. Outrata}, On the Aubin property of a class of parameterized variational systems, Math. Methods Oper. Res. 86 (2017),  pp.~443-467.

\bibitem{GfrOut18}{\sc H.~Gfrerer, J.V.~Outrata}, On the Aubin property of solution maps to  parameterized  variational systems with implicit constraints, (2018), submitted, arXiv:1810.12604.
%
\bibitem{GiMo11}
{\sc I. Ginchev, B.~S. Mordukhovich}, On directionally dependent subdifferentials, C.R. Bulg. Acad. Sci. 64 (2011), pp.~497--508.
\bibitem{HenJouOut02}{\sc R. Henrion, A. Jourani, J.~V. Outrata}, On the calmness of a class of
multifunctions, SIAM J. Optim., 13 (2002), pp.~603--618.
%
\bibitem{Iof79a}{\sc A.~D. Ioffe}, Necessary and sufficient
conditions for a local minimum 1: A reduction theorem and first
order conditions, SIAM J. Control Optim., 17 (1979),
pp.~245--250.
%
\bibitem{Iof79d}{\sc A.~D. Ioffe}, Regular points of
Lipschitz functions, Trans. Amer. Math. Soc., 251 (1979),
pp.~61--69.
%
%
\bibitem{IofOut08}{\sc A.~D. Ioffe, J.~V. Outrata}, On metric and calmness qualification conditions in subdifferential calculus, Set-Valued Anal., 16 (2008), pp~199--227.
%
\bibitem{KlKum02}{\sc D. Klatte, B. Kummer}, {\em Nonsmooth equations in
optimization. Regularity, calculus, methods and applications},
Nonconvex Optimization and its Applications 60, Kluwer Academic
Publishers, Dordrecht, Boston, London, 2002.
\bibitem{Kru15a}{\sc A.~Y. Kruger}, Error bounds and metric subregularity, Optimization 64 (2015), pp.~49--79.
%
\bibitem{Lev96}{\sc A.~B. Levy}, Implicit multifunction theorems for the sensitivity
analysis of variational conditons, Math. Program., 74 (1996), pp.~333--350.
%
\bibitem{Mo06a}{\sc B.~S. Mordukhovich}, {\em Variational Analysis
and Generalized Differentiation, Vol.I: Basic Theory}, Springer,
Berlin, Heidelberg, 2006.
%
\bibitem{MoOut07}{\sc B.~S.~Mordukhovich, J.~V.~Outrata}, Coderivative analysis of quasi-variational
inequalities with applications to stability and optimization, SIAM J. Optim., 18 (2007), pp.~389--412.
\bibitem{Mo18}{\sc B.~S. Mordukhovich}, {\em Variational Analysis and Applications}, Springer, Cham, 2018.
%
\bibitem{Rob76}{\sc S.~M. Robinson}, Stability theory for systems of inequalities, II: Differentiable nonlinear systems, SIAM J. Numer. Anal.,
13 (1976), pp.~497--513.
%
%
\bibitem{Ro70}{\sc R.T. Rockafellar}, {\em Convex analysis}, Princeton, New Jersey, 1970.
%
\bibitem{RoWe98}
{\sc R.~T. Rockafellar, R.~J-B. Wets}, {\em Variational Analysis}, Springer, Berlin, 1998.
%
\bibitem{ZheNg10}{\sc X.~Y. Zheng, K.~F. Ng},
Metric subregularity and calmness for nonconvex generalized equations in Banach spaces,  SIAM J. Optim., 20 (2010), pp.~2119--2136.
%

\end{thebibliography}
\end{document}